\numberwithin{equation}{section}
\newcommand{\one}{{\rm 1\mskip-4mu l}}
\def\be{\begin{equation}}
\def\ee{\end{equation}}
\def\1{\mathbf{1}}
\DeclareMathOperator{\dist}{dist}
\DeclareMathOperator{\id}{id}
\newtheorem{theorem}{Theorem}[section]
\newtheorem{proposition}[theorem]{Proposition}
\newtheorem{lemma}[theorem]{Lemma}
\newtheorem{remark}[theorem]{Remark}
\theoremstyle{definition}
\newtheorem{example}[theorem]{Example}
\DeclareMathOperator{\vspan}{span}
\begin{document}

\title{Metastability in the stochastic nearest-neighbor\\ 
Kuramoto model of coupled phase oscillators}
\date{Revised version, July 18, 2025}
\author{Nils Berglund, Georgi S. Medvedev, and Gideon Simpson}

\maketitle

\begin{abstract}

The Kuramoto model (KM) of $n$ coupled phase-oscillators is analyzed in this work. The KM on a Cayley graph possesses a family of steady state solutions called twisted states. Topologically distinct twisted states are distinguished by the winding number $q\in\Z$. It is known that for the KM on the nearest-neighbor graph, a $q$-twisted state is stable if $|q|<n/4$. In the presence of small noise, the KM exhibits metastable transitions between $q$--twisted states. Specifically, a typical trajectory remains in the basin of attraction of a given $q$-twisted state for an exponentially long time, but eventually transitions to the vicinity of another such state. In the course of this transition, it passes in close proximity of a saddle of Morse index $1$, called a relevant saddle. In this work, we provide an exhaustive analysis of metastable transitions in the stochastic KM with nearest-neighbor coupling.

We start by analyzing the equilibria and their stability. First, we identify all equilibria in this model. Using the discrete Fourier transform and eigenvalue estimates for rank--1 perturbations of symmetric matrices, we classify the equilibria by their Morse indices. In particular, we identify all stable equilibria and all relevant saddles involved in the metastable transitions. Further, we use Freidlin--Wentzell theory and the potential-theoretic approach to metastability to establish the metastable hierarchy and sharp estimates of Eyring--Kramers type for the transition times. The former determines the precise order, in which the metastable transitions occur, while the latter characterizes the times between successive transitions. The theoretical estimates are complemented by numerical simulations and a careful numerical verification of the transition times. Finally, we discuss the implications of this work for the KM with other coupling types including nonlocal coupling and the continuum limit as $n$ tends to infinity.
\end{abstract}

\leftline{\small 2020 {\it Mathematical Subject Classification.\/} 
60H10, 
34F05, 
60K35, 
92B20. 
}
\noindent{\small{\it Keywords and phrases.\/}
Phase oscillators,
Kuramoto model, XY model,
noise, metastability, synchronization, twisted state.
}  
  
  
\section{Introduction}\label{sec.intro}

The Kuramoto model (KM) of coupled phase oscillators provides an important framework for studying collective behavior in diverse natural and man-made networks ranging from interacting particle models in statistical physics \cite{Kur75}, to neuronal networks and swarms of fireflies \cite{Str-Sync} in biology, to power grids in engineering \cite{DorBul2014}. It has been extremely useful in revealing new facets of well-known phenomena in coupled networks such as synchronization \cite{Kur75, strogatz_kuramoto_2000, ChiMed19a} and multistability \cite{WilStr06}, as well as in identifying new effects such as chimera states \cite{KurBat02, AbrStr06}. 

{The KM with small noise has been studied in the context of the transition from incoherence to synchronization \cite{StrMir91, StrMir92, strogatz_kuramoto_2000}, as well as the long-time behavior of solutions \cite{BGP2010, BGP2014, Giacomin_Lucon_Poquet14} in populations of mean-field coupled phase oscillators. It has also appeared in studies of nontrivial topological behavior in spin systems
such as the XY and $O(N)$ models\cite{Cosco_Shapira_21, COS24}. In this work, we use the stochastic KM to investigate metastability as a mechanism of pattern formation in dynamical networks subjected to small noise.}

{In a similar vein, metastability in the KM was studied in \cite{DeV12}, and metastability in excitable systems was investigated in \cite{BFG2007a, BFG2007b, MedZhu12}. For an extensive overview of metastability and its applications in physical and biological sciences, we refer the interested reader to the monographs \cite{OliVar-LargeDeviations, BovierHollander-Metastability}.}

{Metastability is commonly observed in dynamical systems that are subject to weak noise, 
or in statistical physics systems at low temperature. At equilibrium,
it is most likely to find such a system in the deepest energy well. However, if the system 
is started in a local minimum of the energy landscape that is different from its global 
minimum, it may take a very long time to reach its true equilibrium state. Examples 
of such situations include wrongly magnetized ferromagnets, supersaturated gases,
and supercooled liquids~\cite{denHollander04,OliVar-LargeDeviations}.
Metastability also plays a role in molecular dynamics~\cite{Aristoff_Lelievre} and in
many biological systems, see for instance~\cite{Creaser_Ashwin_Tsaneva-Atanasova20}. 
The aim of the theory of metastability is to identify metastable states, and to characterise 
the time it takes the system to reach its most stable state, as well as the typical path 
it takes to do so.}

We begin by introducing the KM with identical intrinsic frequencies (cf.~\cite{WilStr06}): 
\begin{equation}
\label{eq:Kuramoto} 
 \dot u_i = K \sum_{j\in S} \sin\left( 2\pi(u_{i+j} - u_i)\right)\;, 
\end{equation} 
where $K>0$ is the coupling strength, $i \in \Lambda = \Z/n\Z$, and $S$ is 
a  finite symmetric set subset of $\Lambda\setminus\set{0}$, i.e.,
 $S\subset\Lambda\setminus\set{0}$ and $-s\in S$ whenever $s\in S$. 
 In addition, let $r=|S|/2$ denote the range of coupling.
 For the most part, we  will deal with the nearest neighbor coupling $S=\{-1, 1\}$
 and $r=1$.

\begin{figure}[!t]
\centering
\subfigure[$q=1$]{\includegraphics[width=4.5cm]{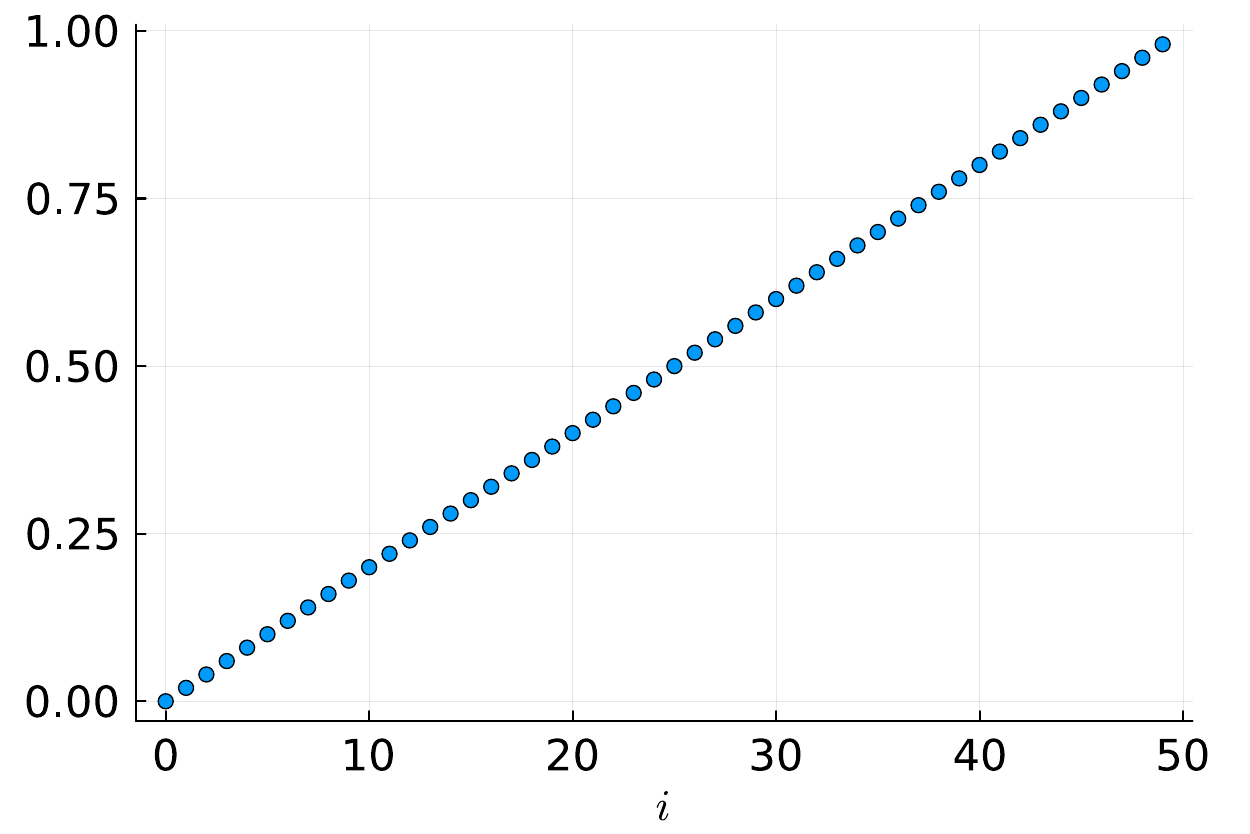}}
\subfigure[$q=2$]{\includegraphics[width=4.5cm]{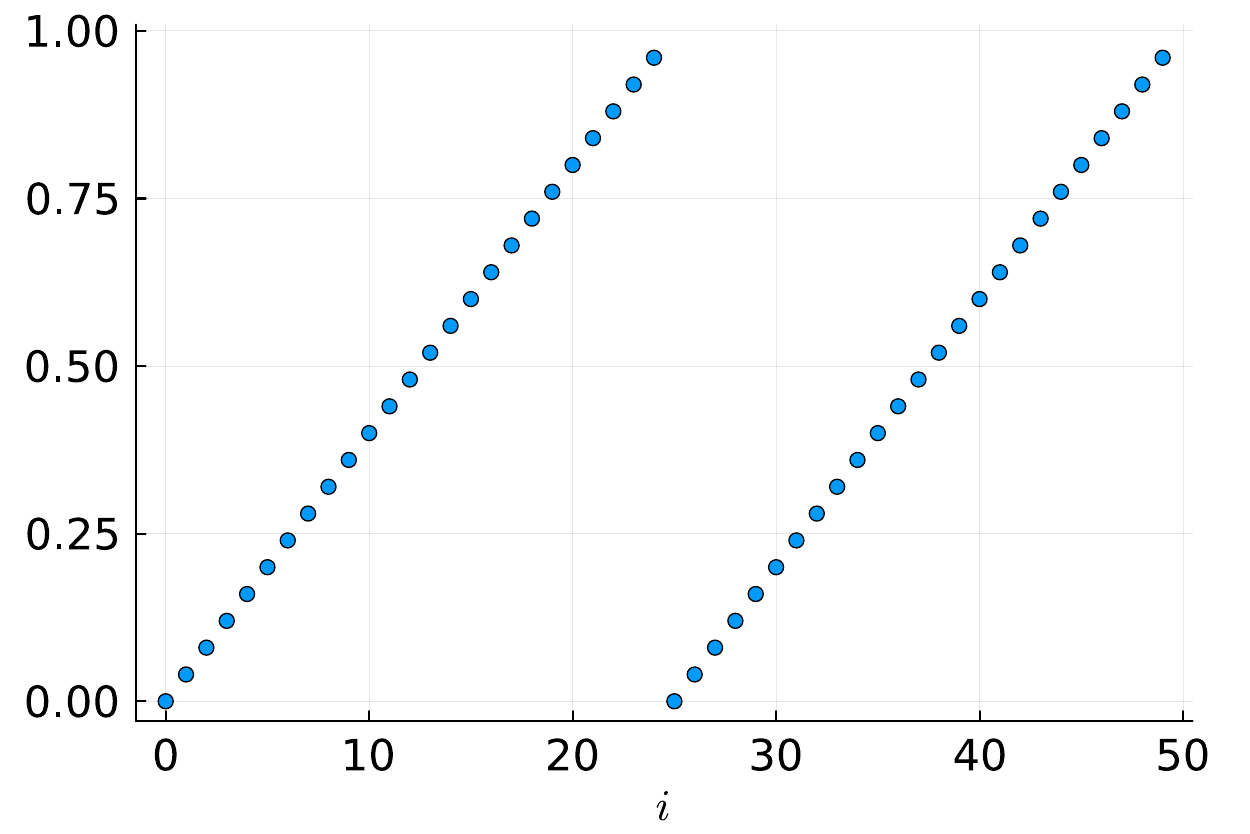}}
\subfigure[$q=3$]{\includegraphics[width=4.5cm]{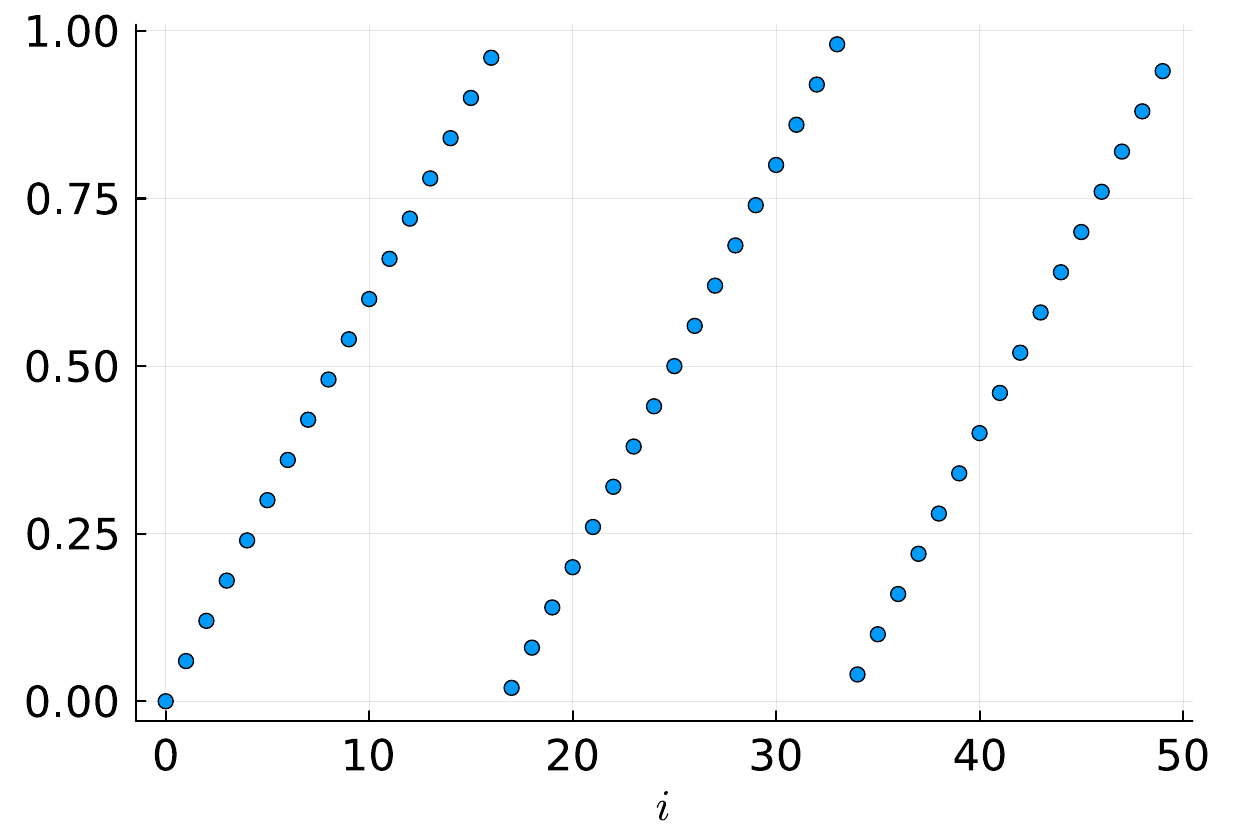}}
\label{f.twist}
\caption{Examples of $q$-twisted state equilibria with $n=50$. The plots represent 
$u^{(q,0)}_i$ as a function of $i$.}
\end{figure}

The KM \eqref{eq:Kuramoto} has a family of steady state solutions
$u^{(q,\ph)}$ of the form 
\begin{equation}
  \label{q-twist}
 u^{(q,\ph)}_i = \frac{qi}{n}+\ph\;, \qquad i\in\Lambda
\end{equation} 
for any integer $q$ such that $-\frac n2 < q \leqs \frac n2$ and $\ph\in [0, 1)$. 
See Figure~\ref{f.twist} for some examples. 
These are called \emph{$q$-twisted states} (cf.~\cite{WilStr06}). The stability of a $q$-twisted state can be determined from an explicit condition on $q, r$, and $n$
(cf.~\cite[Theorem~3.4]{MedTan15b}). In particular, it can be shown that for
$r\ll n$ there are multiple stable $q$-twisted states.

\begin{figure}[!t]
\centering
\subfigure[]{\includegraphics[width=6.5cm]{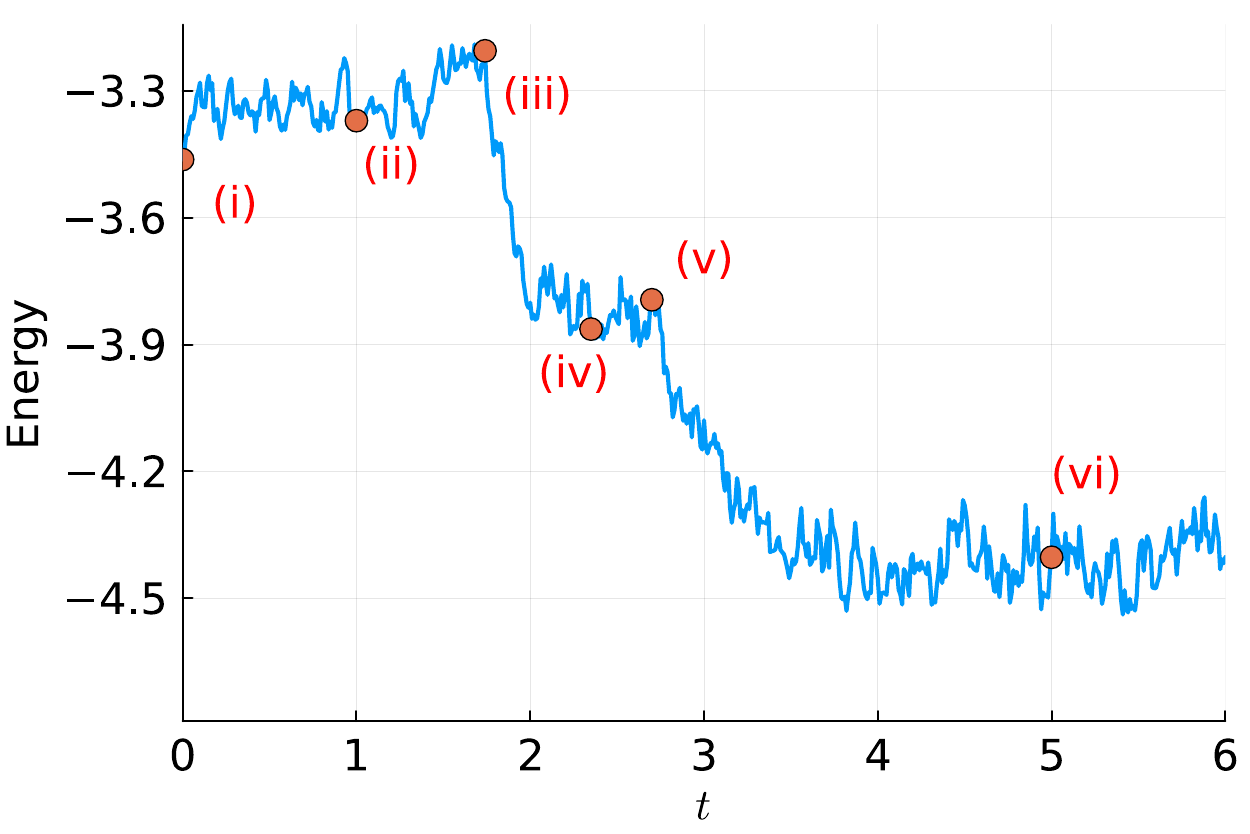}}     
\subfigure[]{\includegraphics[width=6.5cm]{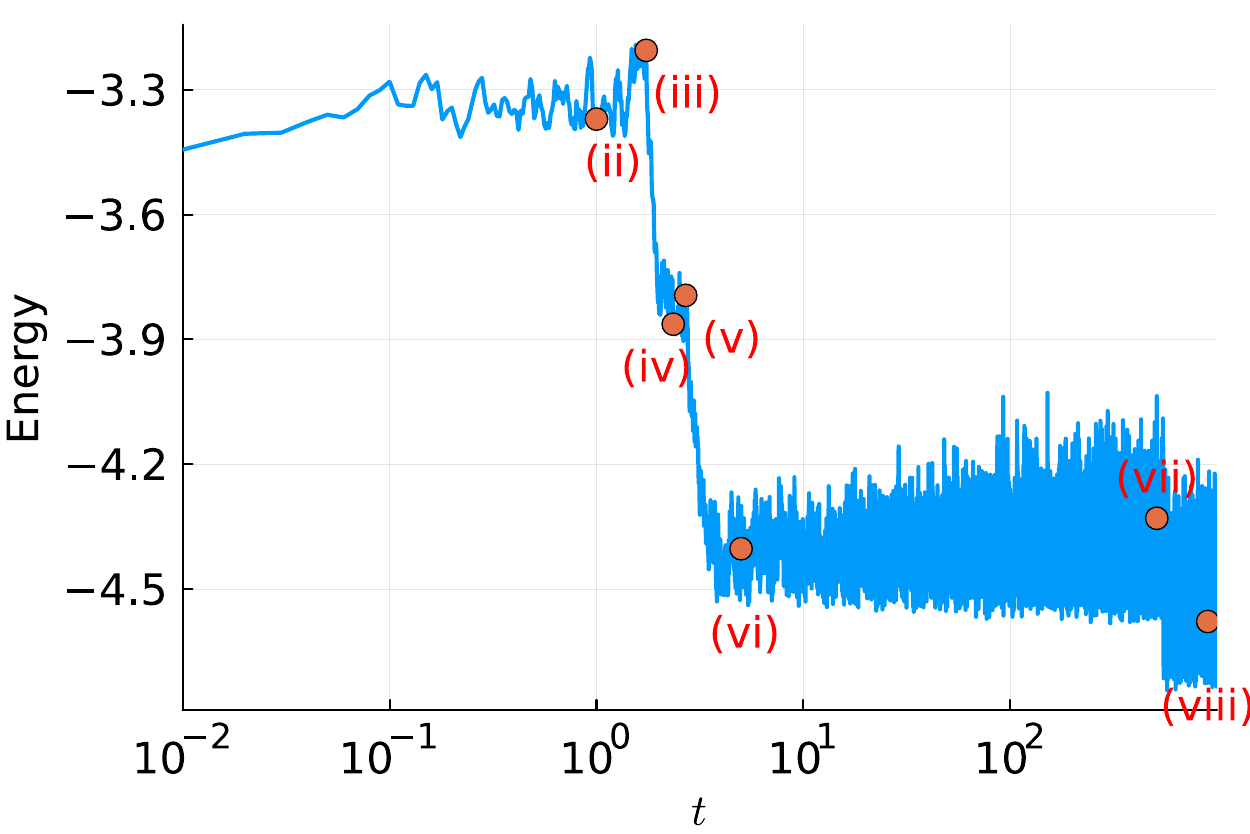}}     

    \caption{Energy of the Kuramoto system with $n=20$ as a function of time.  Labeled points correspond to approximate local minima and transition states that are shown in Figure \ref{fig:transitions-states}. Figure (b) is a continuation of (a) on a longer time scale.}
    \label{fig:eng_n20}
\end{figure}

In this paper, we study the behavior of solutions of \eqref{eq:Kuramoto} under weak
stochastic forcing. To this end, we rewrite it as a gradient system and add a stochastic forcing term on the right hand side, yielding the stochastic differential equation (SDE)
\begin{equation}
\label{eq:KM-forced}
 \6u_t = -\nabla U(u_t)\6t + \sqrt{2\eps}\6W_t\;.
\end{equation}
Here $U$ is the potential (or energy function) on $(\fS^1)^\Lambda$ given by 
\begin{equation}
\label{eq:KM-potential} 
 U(u) = -\frac{K}{4\pi} \sum_{i\in\Lambda} \sum_{j\in S} 
\cos\bigpar{2\pi(u_{i+j} - u_i)}\;,
\end{equation} 
$W_t$ is an $n$-dimensional standard Wiener process, and $\eps>0$ is a small parameter 
controlling the standard deviation of the noise. 

\begin{figure}[!t]
     \centering
     \subfigure[Point (i), $t=0$]{\includegraphics[width=4.6cm]{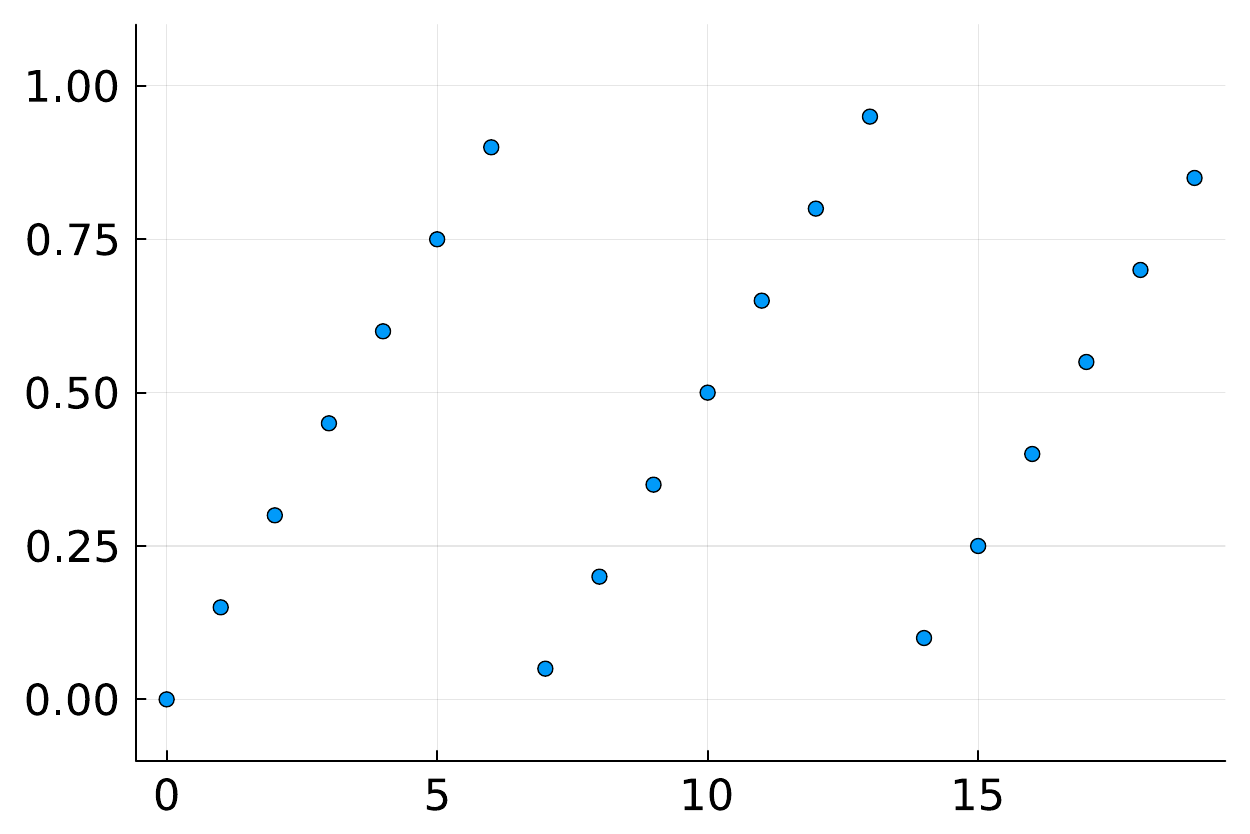}}
    \subfigure[Point (ii), $t=1$]{\includegraphics[width=4.6cm]{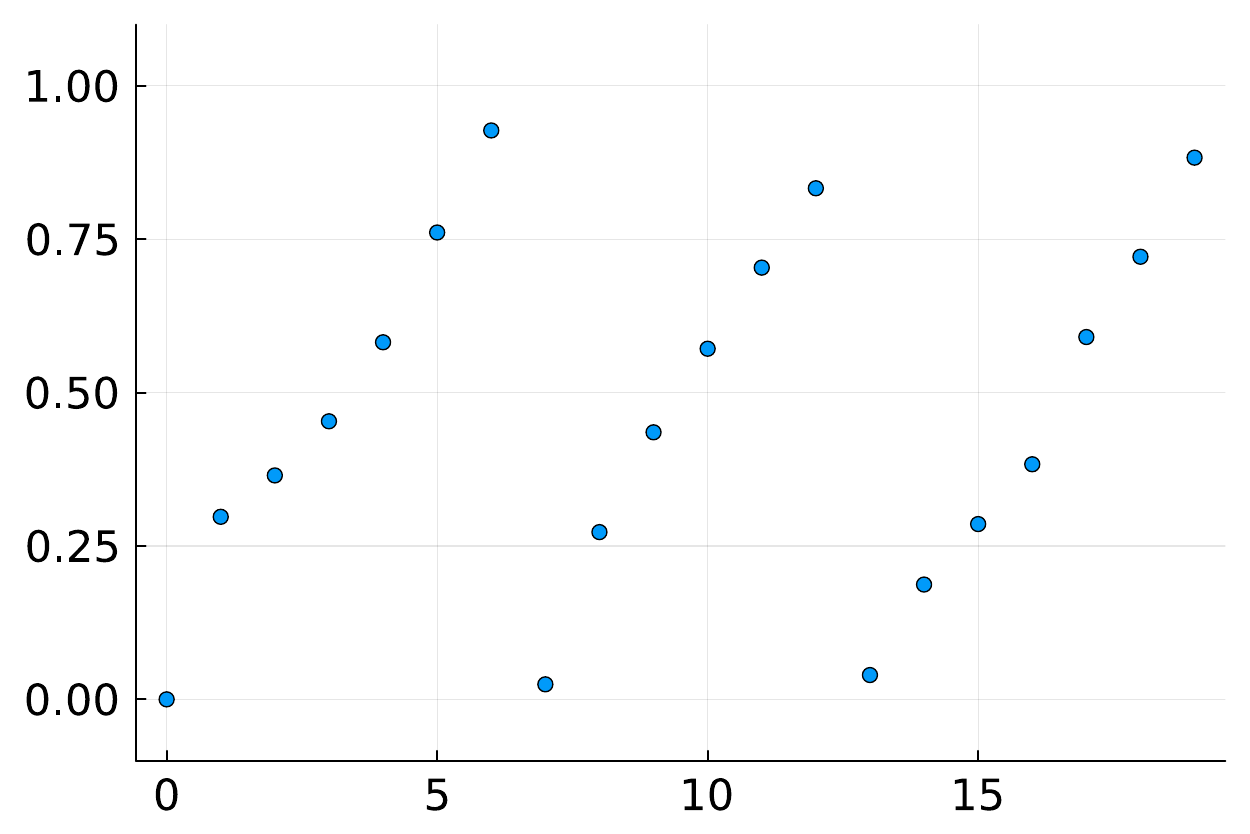}}
      \subfigure[Point (iii), $t=1.74$]{\includegraphics[width=4.6cm]{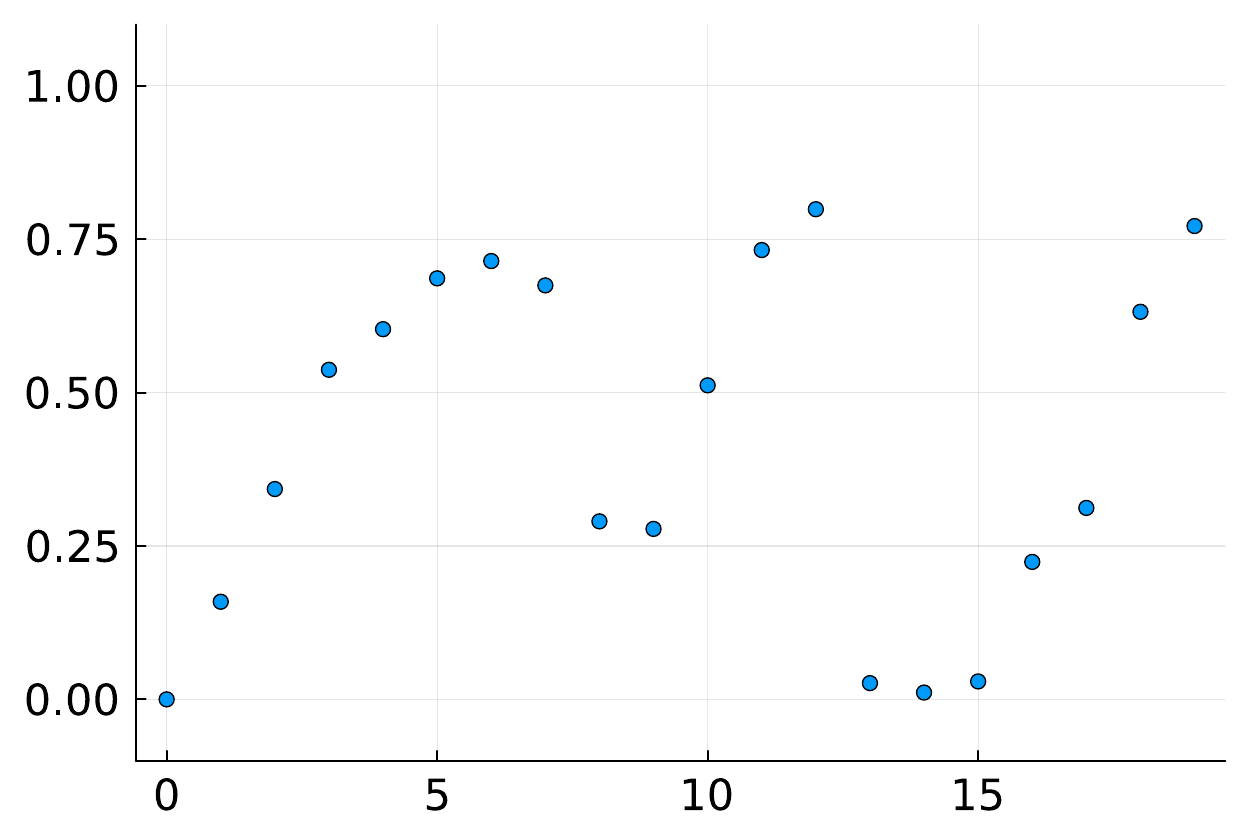}}\\
     \subfigure[Point (iv), $t=2.35$]{\includegraphics[width=4.6cm]{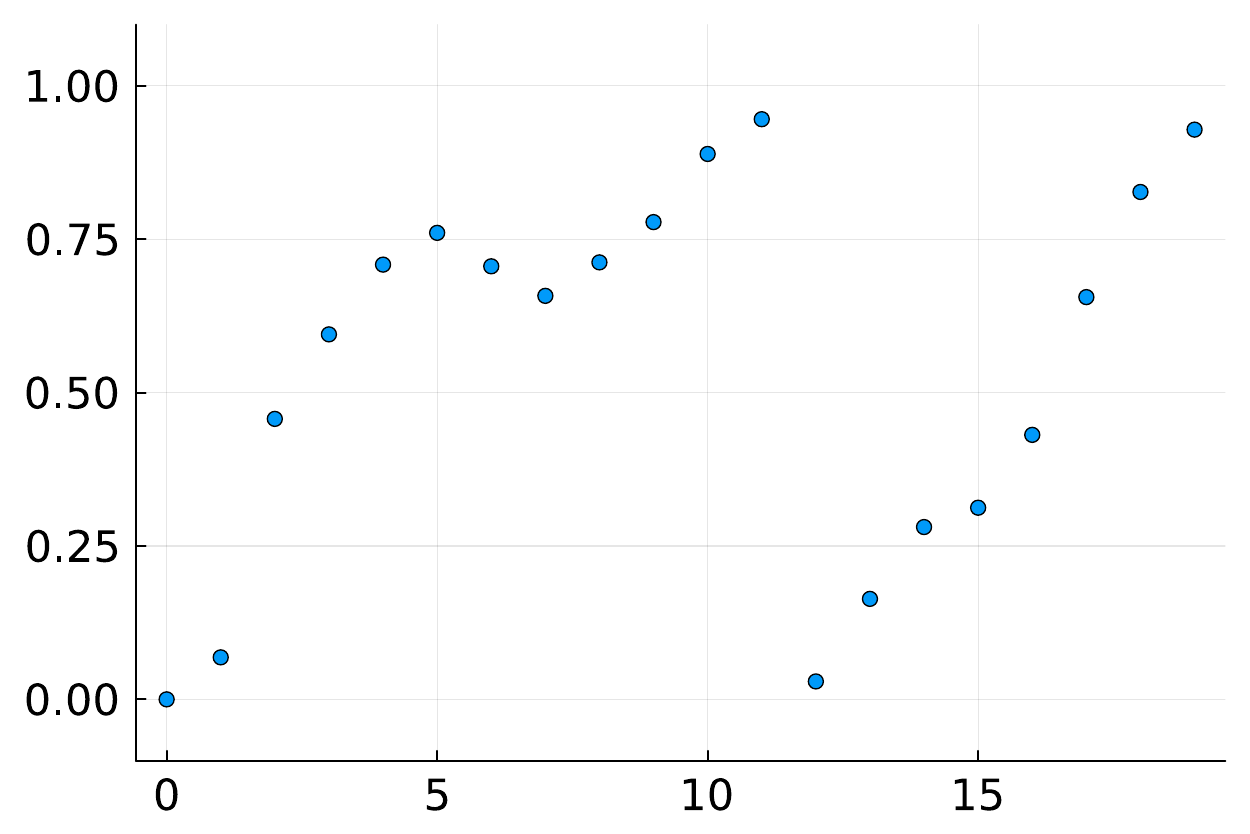}}
     \subfigure[Point (v), $t=2.7$]{\includegraphics[width=4.6cm]{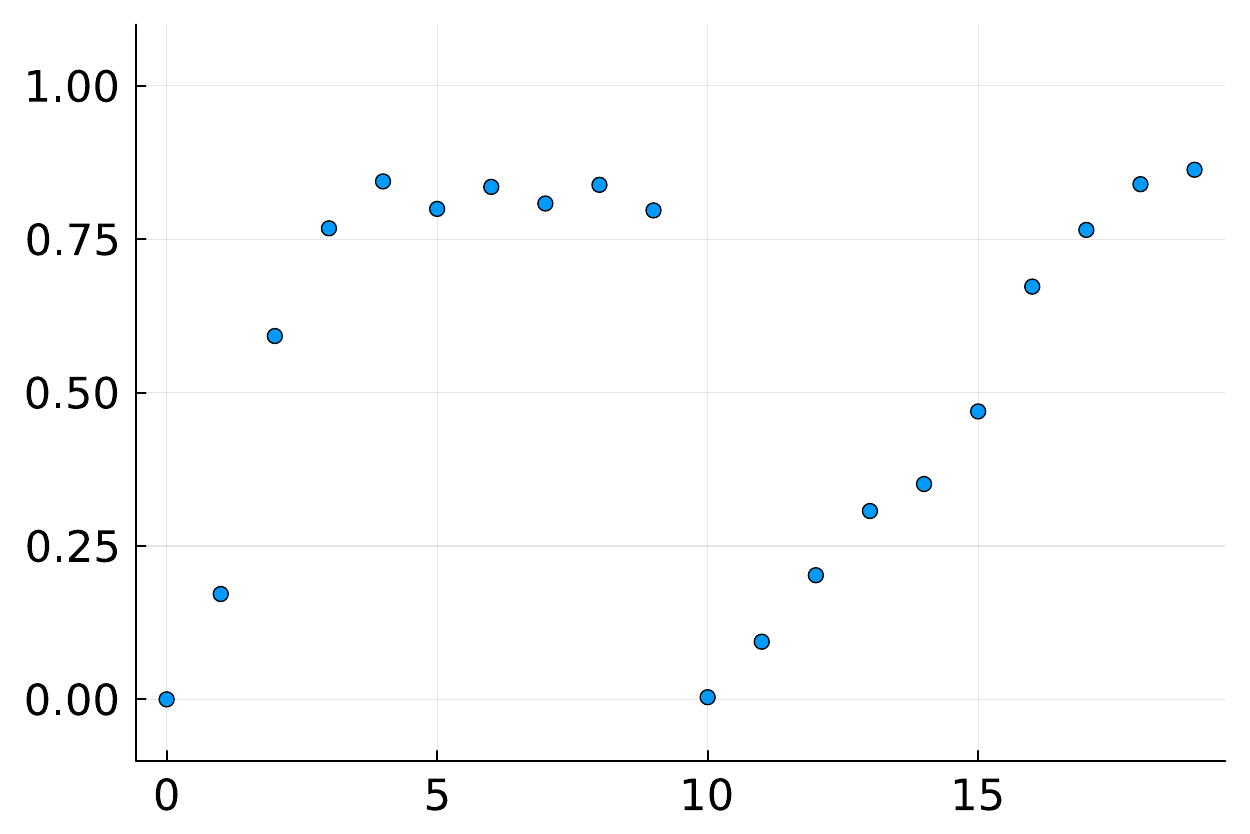}}
    \subfigure[Point (vi), $t=5$]{\includegraphics[width=4.6cm]{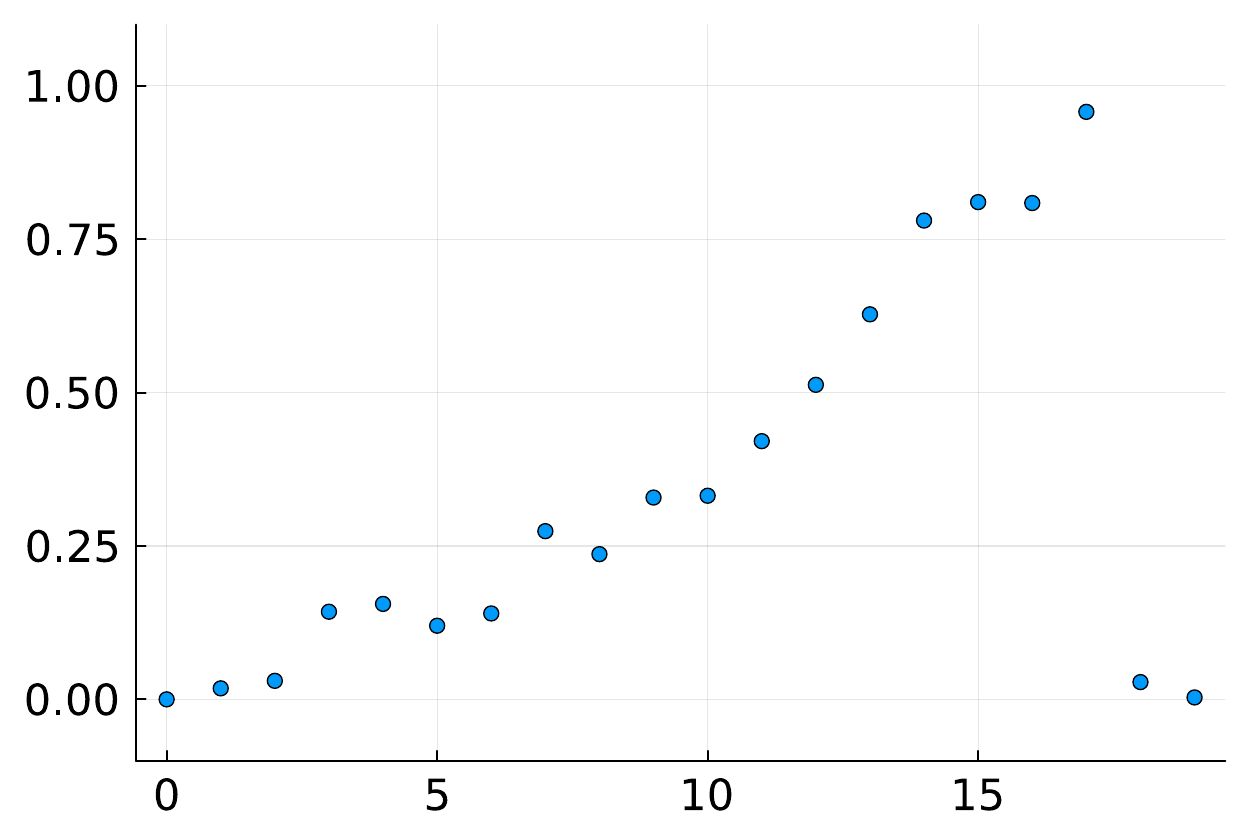}}\\  
     \subfigure[Point (vii), $t=510.65$]{\includegraphics[width=4.6cm]{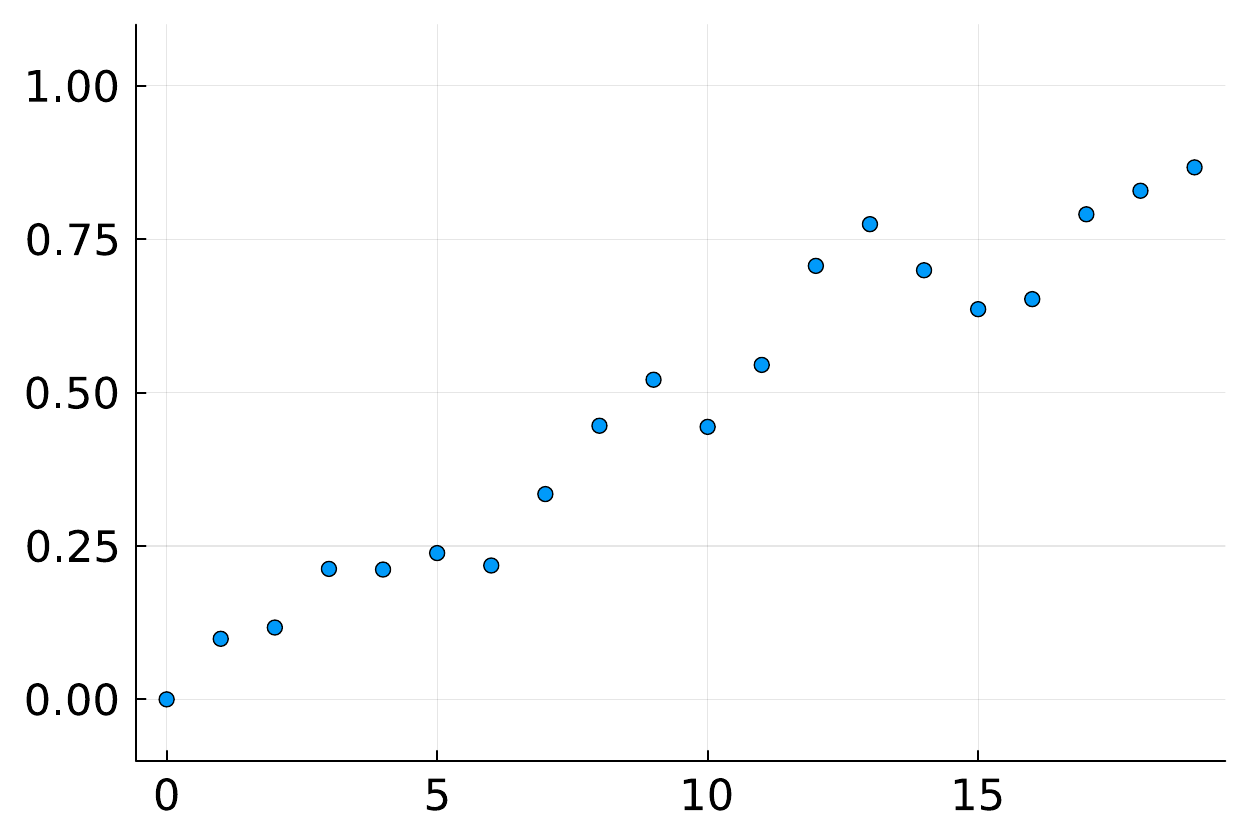}}
     \subfigure[Point (viii), $t=900$]{\includegraphics[width=4.6cm]{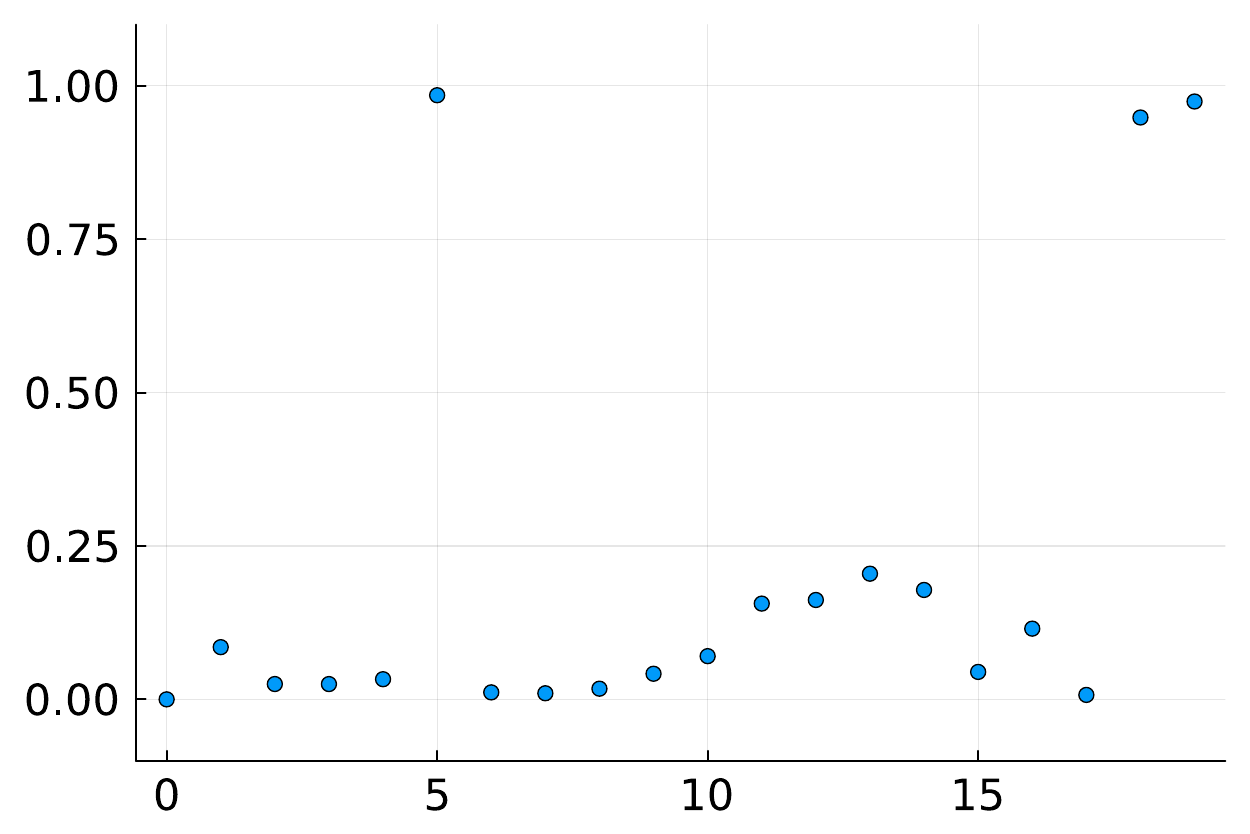}}    
     \caption{The characteristic states visited by the trajectory in the course of a transition from the
       $3$-twisted state to the $0$-twisted state shown in Figure~\ref{fig:eng_n20}.}
     \label{fig:transitions-states}
 \end{figure}

In the presence of noise, the KM exhibits {transitions between neighboring metastable twisted 
states, commonly referred to as \emph{metastable transitions} in the literature.}
For illustration, we discuss the example shown in Figures~\ref{fig:eng_n20} and \ref{fig:transitions-states}. The system initialized at a $3$-twisted state undergoes a series of metastable transitions that take it through the basins of attraction of $2$- and $1$-twisted states before reaching the basin of attraction of a $0$-twisted state. Figure~\ref{fig:transitions-states} shows a few representative snapshots, illustrating both the states that are close to the attractors of the KM and the states close to the boundaries between the basins of attraction of the different twisted states. The main objective of this work is to determine which of these transitions are the most likely, determine the most likely order of these transitions, and to quantify the random time intervals between successive transitions.

The analysis in this paper relies on the Freidlin--Wentzell theory of large deviations \cite{FW-3rd} and the potential-theoretic approach to metastability \cite{BEGK2004}, which allows for sharp asymptotics of the transition times. 
The description of metastability based on the Freidlin--Wentzell theory can be found in \cite[Chapter~6]{FW-3rd} (see also \cite{OliVar-LargeDeviations}).
For the exposition of the potential-theoretic approach and for many different applications of metastability, we refer to \cite{BovierHollander-Metastability}. Large deviations and metastability as the mechanisms driving dynamics of coupled networks have been studied, for instance, in \cite{BFG2007a, BFG2007b, DeV12, MedZhu12, Giacomin_Lucon_Poquet14, Lcon_Poquet17, Giacomin_Poquet_Shapira18}.  In this work, through the careful analysis of the attractors and relevant saddles for the model at hand, we are able to establish the precise hierarchy of metastable transitions and to obtain sharp estimates of the transition times. The results of this work show that coupled systems of simple phase oscillators with identical intrinsic velocities exhibit interesting metastable dynamics due to the complex energy landscape.

Owing to the presence of metastability in many physical systems, particularly those modeled with molecular dynamics, a variety of numerical methods have been developed to accelerate their simulation, \cite{lelievre_mathematical_2020,perez_chapter_2009,zamora_accelerated_2020,tiwary_review_2016,hamelberg_accelerated_2004}. These systems are multistable, analogous to the variety of stable twisted states found in the KM.  Thus, an additional benefit of this work is provide a benchmark problem for accelerated dynamics methods for which the mean first passage times are known analytically in the low temperature limit.

The remainder of this article is organized as follows. In Section~\ref{sec:stoch}, we recall some known properties of gradient systems perturbed by weak Gaussian noise, based on the Freidlin--Wentzell theory of large deviations~\cite{FW-3rd}, and the Eyring--Kramers law~\cite{BEGK2004,Berglund_Kramers}. This theory has to be adapted to the model at hand, owing to symmetries of the potential $U$. We give a description of this procedure in Section~\ref{sec:pot}. In Section~\ref{sec:equilibria}, we provide a detailed analysis of the equilibrium states of the system. In particular, we obtain a complete list of stable states and of all relevant saddles. Section~\ref{sec:EK} contains the main analytical result of this work, Theorem~\ref{thm:main}, which gives sharp Eyring--Kramers asymptotics for the expected transition time from less stable to more stable $q$-twisted states. This section also contains a discussion of the computation of some more general transition times. Section~\ref{sec:numerical} illustrates the results with numerical simulations, and Section~\ref{sec:discuss} provides concluding remarks and an outlook. 


\section{Stochastically perturbed systems}
\label{sec:stoch}

Consider a finite-dimensional SDE of the form 
\begin{equation}\label{generic}
 \6u_t = -\nabla V(u_t)\6t + \sqrt{2\eps}\6W_t
\end{equation} 
on $\R^n$, where $W_t$ is an $n$-dimensional standard Wiener process. 

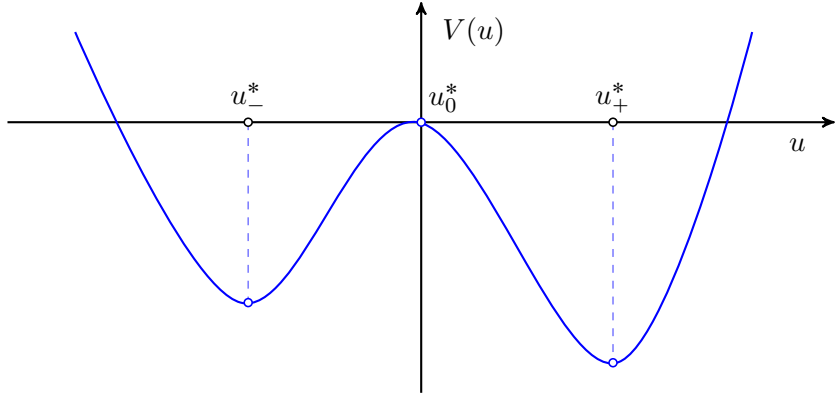
\begin{figure}[!t]
\begin{center}
\begin{tikzpicture}[>=stealth',main node/.style={draw,circle,fill=white,minimum
size=3pt,inner sep=0pt},scale=1,x=1cm,y=0.8cm
]



\draw[->,thick] (-5.5,0) -> (5.5,0);
\draw[->,thick] (0,-4.5) -> (0,2.0);


\draw[blue,thick] plot[smooth,tension=.6]
  coordinates{(-4.6,1.5) (-2.4,-3) (-0.05,0) (2.6,-4) (4.4,1.5)};
 

\draw[blue!50,semithick,dashed] (-2.3,0) -- (-2.3,-3);
\draw[blue!50,semithick,dashed] (2.55,0) -- (2.55,-4);


\node[main node,blue,fill=white,semithick] at (0,0) {}; 
\node[main node,semithick] at (-2.3,0) {}; 
\node[main node,blue,fill=white,semithick] at (-2.3,-3) {}; 
\node[main node,semithick] at (2.55,0) {}; 
\node[main node,blue,fill=white,semithick] at (2.55,-4) {}; 

\node[] at (-2.3,0.4) {$u^*_-$};
\node[] at (0.3,0.4) {$u^*_0$};
\node[] at (2.55,0.4) {$u^*_+$};

\node[] at (5.0,-0.37) {$u$};
\node[] at (0.7,1.5) {$V(u)$};

\end{tikzpicture}
\vspace{-4mm}
\end{center}
\caption[]{A one-dimensional double-well potential.  
}
\label{fig:double_well}
\end{figure}

Assume first that $V$ is a double-well potential, having local minima at $u^*_-$ and $u^*_+$ (Figure~\ref{fig:double_well}), and that the Hessian matrix of $V$ at $u^*_-$ has eigenvalues 
\begin{equation}
 0 < \lambda_1 \leqs \lambda_2 \leqs \dots \leqs \lambda_n\;.
\end{equation} 
Suppose $G\ni u^*_-$ is an open set contained in the basin of attraction of $u^*_-$. Consider a trajectory of \eqref{generic} starting at $u_0\in G$. After staying  in $G$ for a time of order  $\e^{C/\eps}$, for some $C>0$, it eventually leaves the basin of attraction of $u^*_-$ with probability $1$ (cf.~\cite{FW-3rd}). On its way, it passes with high probability through the vicinity of a saddle $u^*_0$, on the boundary of the basins of attraction of the two local minima of the double-well potential. Assume that this saddle is unique and the Hessian matrix of $V$ at the saddle has eigenvalues 
\begin{equation}
 \mu_1 < 0 < \mu_2 \leqs \mu_3 \leqs \dots \leqs \mu_n\;.
\end{equation} 
The Eyring--Kramers law states that if the system starts near $u^*_-$, and 
$\eps$ is small, then the expected time needed to reach a neighborhood of 
$u^*_+$ is given by
\begin{equation}
\begin{split} 
 \bigexpecin{u^*_-}{\tau_+} 
 &= \frac{2\pi}{\abs{\mu_1}}
 \sqrt{\frac{\bigabs{\det\bigbrak{\frac{\partial^2 V}{\partial^2u}(u^*_0)}}}
 {\det\bigbrak{\frac{\partial^2 V}{\partial^2u}(u^*_-)}}}
 \e^{[V(u^*_0) - V(u^*_-)]/\eps}
 \bigbrak{1 + R(\eps)} \\ 
  &= 2\pi \sqrt{\frac{\mu_2\dots\mu_n}{\abs{\mu_1}\lambda_1\dots\lambda_n}}
 \e^{[V(u^*_0) - V(u^*_-)]/\eps}
 \bigbrak{1 + R(\eps)}
\;,
\label{eq:EK} 
\end{split}
\end{equation} 
where $\frac{\partial^2 V}{\partial^2u}$ stands for the Hessian matrix of $V$, and 
$R(\eps)$ is a remainder going to $0$ as $\eps\to 0$ (one usually has 
information on how fast this happens). 

This result has been extended to potentials with more than two wells, provided one has a so-called \emph{metastable hierarchy}. Given two points $u, v\in\R^n$, we call \emph{path 
connecting $u$ to $v$} a continuous map $\gamma: [0,1]\to\R^n$ such that $\gamma(0) = u$ and $\gamma(1) = v$. In that case, we write $\gamma: u\to v$. The \emph{communication 
height from $u$ to $v$} is defined as 
\begin{equation}
 \overline V(u,v) 
 = \inf_{\gamma: u\to v} \sup_{t\in[0,1]} V(\gamma(t))\;.
 \label{e:cheight1}
\end{equation} 
This can be generalized to the communication height between two sets $A, B\subset \R^n$ 
by 
\begin{equation}
 \overline V(A,B) 
 = \inf_{u\in A, v\in B} \overline V(u,v) \;.
\end{equation} 
A \emph{minimal path from $A$ to $B$} is any path $\gamma: u\to v$, with $u\in A$ and $v\in B$, 
such that 
\begin{equation}
 \sup_{t\in[0,1]} V(\gamma(t)) = \overline V(A,B)\;.
\end{equation} 
{\cite[Proposition~2.1]{BG_MPRF10} shows that if $A$ and $B$ are included in the basins 
of attraction of two different local minima of $V$, then there exists a minimal path 
realising the supremum. In fact, the potential along any minimal path reaches its maximal value on a saddle.}

If there is a finite set of points $w_1, \dots, w_p\in\R^n$ such that 
$V(w_1) = \dots = V(w_p) = \overline V(A,B)$, and any minimal path $\gamma$ from $A$ to $B$
contains at least one of these points, then the $w_i$ are called \emph{relevant saddles} 
between $A$ and $B$. In fact, in the generic case the relevant saddle is unique, provided $A$ and $B$ are contained in the basins of attraction of two different local minima of the potential (see~\cite[Section~2.1]{BG_MPRF10} for more details). 
A related approach for determining the metastable hierarchy can be found in \cite[Chapter~6, \S~6]{FW-3rd}.

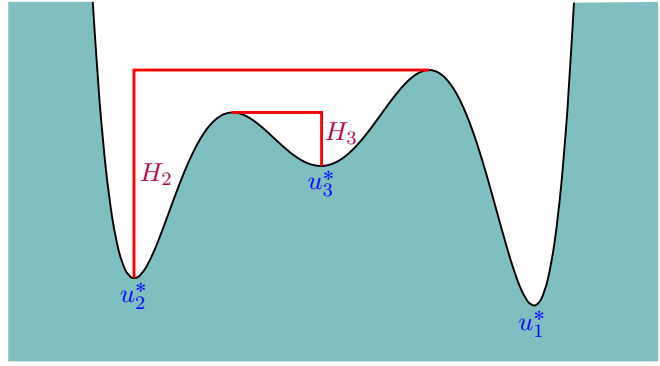
\begin{figure}[!t]
\begin{center}
\scalebox{0.9}{
\begin{tikzpicture}[>=stealth',main node/.style={circle,minimum
size=0.25cm,fill=blue!20,draw},x=1.2cm,y=0.65cm,
declare function={f(\x) = (\x-1)*(\x-5)*(67*\x^4-789*\x^3+2996*\x^2-4044*\x+864)/720;}]

\path[-,smooth,fill=teal!50,domain=0.04:5.964,samples=75,/pgf/fpu,
/pgf/fpu/output
format=fixed] plot (\x, {f(\x)}) -- 
(7,{f(5.964)}) -- (7,-3.5) -- (-1,-3.5) -- (-1,{f(0.04)});

\draw[thick,-,smooth,domain=0.04:5.964,samples=75,/pgf/fpu,/pgf/fpu/output
format=fixed] plot (\x, {
(\x-1)*(\x-5)*(67*\x^4-789*\x^3+2996*\x^2-4044*\x+864)/720 });

\draw[red,very thick] (2.857,0.943) -- (2.857,2.16) -- (1.75,2.16);
\draw[red,very thick] (0.548,-1.61) -- node[right,red!70!blue]
{$\!H_2$} (0.548,3.125) -- (4.18,3.125);

\node[blue] at (0.548,-2.01) {$u^*_2$};
\node[blue] at (2.857,0.543) {$u^*_3$};
\node[blue] at (5.452,-2.648) {$u^*_1$};

\node[red!70!blue] at (3.1,1.7) {$H_3$};
\end{tikzpicture}
}
\vspace{-4mm}
\end{center}
\caption[]{Example of metastable hierarchy. The relevant relative communication heights are 
$H_3 = \overline{V}(u^*_3,\set{u^*_1,u^*_2}) - V(u^*_3)$ and 
$H_2 = \overline{V}(u^*_2,\set{u^*_1}) - V(u^*_2)$.
}
\label{fig:meta_hierarchy}
\end{figure}

With this terminology in place, we can now define the notion of metastable hierarchy. Assume 
that the potential $V$ has a finite set $\set{u^*_1, \dots u^*_{N_0}}$ of local minima. For 
any $k\in\set{1,\dots,N_0}$, we define the metastable set $\cM_k = \set{u^*_1,\dots,u^*_k}$. 
For each $k\in\set{2,\dots,N_0}$, we define the \emph{relative communication height} 
\begin{equation}
 H_k = \overline{V}(u^*_k, \cM_{k-1}) - V(u^*_k)\;.
\end{equation} 
The quantity $H_k$ yields the barrier in the potential landscape that one needs to overcome 
to reach $\cM_{k-1}$ from $u^*_k$. 
We say that the $u^*_i$ are in metastable order, and write $u^*_1 \prec u^*_2 \prec \dots u^*_{N_0}$,  
if there exists $\theta > 0$ such that 
\begin{equation}
 H_k \leqs \min_{i\leqs k} \bigbrak{\overline V(u^*_i, \cM_{k-1}) - V(u^*_i)} - \theta 
\end{equation} 
holds for $k\in\set{2,\dots,N_0}$. Intuitively, this means that the minima are arranged 
in the order of difficulty for escape. The easiest transition is from 
$u^*_{N_0}$ to $\cM_{N_0-1}$, and the hardest transition is from $u^*_2$ to $\cM_1 = \set{u^*_1}$. 
Figure~\ref{fig:meta_hierarchy} gives an example with $N_0 = 3$. 

Assume that all local minima are non-degenerate, and that for each $k\in\set{2,\dots,N_0}$, there is a unique relevant saddle $\hat u^*_k$ between $u^*_k$ and $\cM_{k-1}$, which is also non-degenerate. Then Theorem~3.2 in~\cite{BEGK2004} shows that the first hitting time $\tau_{k-1}$ of $\cM_{k-1}$, starting from $u^*_k$, satisfies \begin{equation}
 \bigexpecin{u^*_k}{\tau_{k-1}} 
 = \frac{2\pi}{\abs{\mu_1(k)}}
 \sqrt{\frac{\bigabs{\det\bigbrak{\frac{\partial^2 V}{\partial^2u}(\hat u^*_k)}}}
 {\det\bigbrak{\frac{\partial^2 V}{\partial^2u}(u^*_k)}}}
 \e^{H_k/\eps}
 \bigbrak{1 + R(\eps)} \;,
\end{equation} 
where $\mu_1(k)$ is the unique negative eigenvalue of the Hessian at $\hat u^*_k$. 
Note that the only difference with~\eqref{eq:EK} lies in the points where the Hessians are evaluated. 

Note that~\cite[Theorem~3.2]{BEGK2004} does not make any statement on other metastable transitions 
than the ones from $u^*_k$ to $\cM_{k-1}$. However, results in~\cite{Rezakhanlou_Seo_2021scaling,Seo2020,B2023} show that the system can be approximated, 
in a suitable sense, by a continuous-time Markov chain on the set of local minima. This allows 
estimating other expected transition times, in a way we illustrate in Section~\ref{ssec:EK_general}. 

{Let us point out that while the above results apply to the expected transition time, 
some information is also available on the distribution of this time. For instance, 
results in~\cite[Chapter 6, \S 5]{FW-3rd} imply the concentration result 
\begin{equation}
 \lim_{\eps\to0} \bigprobin{u^*_k}{\e^{(H_k-\eta)/\eps} < \tau_{k-1} < \e^{(H_k+\eta)/\eps)}}
 =  1
\end{equation}
for all $\eta > 0$. In other words, with a probability going to $1$ as $\eps\to0$, 
the transition time will belong to a window $[\e^{(H_k-\eta)/\eps},\e^{(H_k+\eta)/\eps}]$
containing the expected value. Since $\e^{\eta/\eps}$ blows up as $\eps\to0$ for 
fixed $\eta>0$, this concentration result is not very precise. However, in~\cite{Day1}, 
Day has shown that the transition time distribution is exponentially asymptotic, 
in the sense that 
\begin{equation}
\label{eq:Day} 
 \lim_{\eps\to0} \bigprobin{u^*_k}{\tau_{k-1} > s\bigexpecin{u^*_k}{\tau_{k-1}}} 
 = \e^{-s}
\end{equation}
for any $s>0$. See also~\cite[Theorem~1.4]{BGK2005} for a statement with sharper 
error bounds valid for gradient systems. These results imply in particular 
that the expectation of the logarithm of the transition time behaves like $H_k/\eps$, 
see Remark~\ref{rem:concentration} below. 
}


\section{The potential landscape}
\label{sec:pot} 

In this section, we provide an analysis of symmetries of the potential landscape of the 
Kuramoto model~\eqref{eq:Kuramoto}. In particular, we construct a fundamental domain that accounts for the angular nature of the variables $u_i$, and show how the degeneracy of the potential under 
global phase shifts can be dealt with by a simple change of variables. 

We will focus on the nearest-neighbor coupling case $S = \set{-1,1}$. Then the 
Kuramoto potential~\eqref{eq:KM-potential} can be written as  
\begin{equation}
\label{eq:U_nn} 
 U(u) = -\frac{K}{2\pi} \sum_{i\in\Lambda} \cos(2\pi(u_{i+1}-u_i))\;,
\end{equation} 
which can be viewed as a function from $\R^\Lambda$ to $\R$ (or, equivalently, from 
$\R^n$ to $\R$). This potential has a large symmetry group, which has important implications 
{for} the analysis. 


\subsection{Symmetries}
\label{ssec:sym}

A \emph{symmetry} is a map $g:\R^\Lambda\to\R^\Lambda$ such that 
\begin{equation}
 U(g(u)) = U(u) 
 \qquad
 \forall u\in\R^\Lambda\;.
\end{equation} 
The potential~\eqref{eq:U_nn} has the following symmetries: 

\begin{enumerate}
\item   \textbf{Integer translations:}
\begin{equation}
 T_k(u_0, \dots, u_{n-1}) = (u_0 + k_0, \dots, u_{n-1} + k_{n-1})\;, 
 \qquad 
 k\in\Z^\Lambda\simeq\Z^n\;.
\end{equation} 
This is due to the $2\pi$-periodicity of $\sin$. 

\item   \textbf{Global phase shift:}
\begin{equation}
 S_\ph(u_0, \dots, u_{n-1}) = (u_0 + \ph, \dots, u_{n-1} + \ph)\;, 
 \qquad
 \ph\in\R\;.
\end{equation} 
This is due to the translation invariance of the KM. 
 
\item   \textbf{Cyclic permutation of components:}
\begin{equation}
\label{eq:Cp} 
 {C_j(u_0, \dots, u_{n-1}) = (u_{j}, \dots, u_{n-1+j})}\;, 
 \qquad 
 {j \in \Lambda}\;.
\end{equation} 
This is due to nearest neighbors having the same interaction. 

\item   \textbf{Inversion:}
\begin{equation}
 I(u_0, \dots, u_{n-1}) = (-u_0,\dots,-u_{n-1})\;.
\end{equation} 
This follows from the fact that the interaction with the left and right neighbor 
are the same. 
\end{enumerate}

It will be convenient to take as phase space a fundamental domain with respect to the first two 
types of symmetries. This means that we consider two points $u,v\in\R^\Lambda$ to be equivalent if, 
and only if, there exist $k\in\Z^\Lambda$ and $\ph\in\R$ such that $v = T_kS_\ph u$. A \emph{fundamental 
domain} (or \emph{unit cell}) is then a set $\cD$ of representatives of the equivalence classes defined by this equivalence relation.

If we only considered integer translations, a natural choice of fundamental domain would be 
the torus $\T^\Lambda = \R^\Lambda/\Z^\Lambda$ (or, equivalently, $\T^n = \R^n/\Z^n$). However, 
since we also consider global phase shifts, we proceed differently. We first define the hyperplane 
\begin{equation}
 \Sigma = \set{u\in\R^\Lambda \colon u_0 + \dots + u_{n-1} = 0}\;.
\end{equation} 
This hyperplane corresponds to modding out global phase shifts, the representative of a point 
$u\in\R^\Lambda$ being simply its orthogonal projection on $\Sigma$.

We now want to account for integer translations as well. 
If $k = (k_0,\dots,k_{n-1})^\top\in\Z^\Lambda$ is a point with integer coordinates, its orthogonal 
projection on $\Sigma$ has coordinates 
\begin{align}
 k - \frac1n\pscal{k}{\1}\1
 &= \frac1n {\bigpar{(n-1)k_0-k_1-\dots-k_{n-1}, -k_0+(n-1)k_1-k_2-\dots-k_{n-1},\dots)}^\top} \\
 &= \sum_{i=0}^{n-1} k_i v_i\;,
\label{eq:proj_lattice} 
\end{align} 
where 
\begin{equation}
\label{eq:def1} 
 \1 = \bigpar{1,\dots,1}^\top \in\R^\Lambda\;,
\end{equation} 
and
\begin{align}
 v_0 &= \bigpar{1 -\tfrac{1}{n}, -\tfrac{1}{n}, \dots, -\tfrac{1}{n}}^\top\;, \\
 v_1 &= \bigpar{-\tfrac{1}{n}, 1-\tfrac{1}{n}, \dots, -\tfrac{1}{n}}^\top\;, \\
 &\dots \\ 
 v_{n-1} &= \bigpar{-\tfrac{1}{n}, \dots, -\tfrac{1}{n}, 1-\tfrac{1}{n}}^\top\;.
\end{align}
The set of vectors $(v_0, \dots, v_{n-2})$ forms a (non-orthonormal) basis of $\Sigma$.
The projections~\eqref{eq:proj_lattice} form a lattice given by integer linear combinations 
of these vectors. 

This motivates the choice of fundamental domain 
\begin{equation}
 \cD = \bigset{y_0 v_0 + \dots + y_{n-2} v_{n-2} \colon -\tfrac12 \leqs y_i < \tfrac12, 
 i = 0,\dots,n-2}\;.
\end{equation} 

\begin{figure}[!t]
 \centerline{
 \includegraphics[height=80mm]{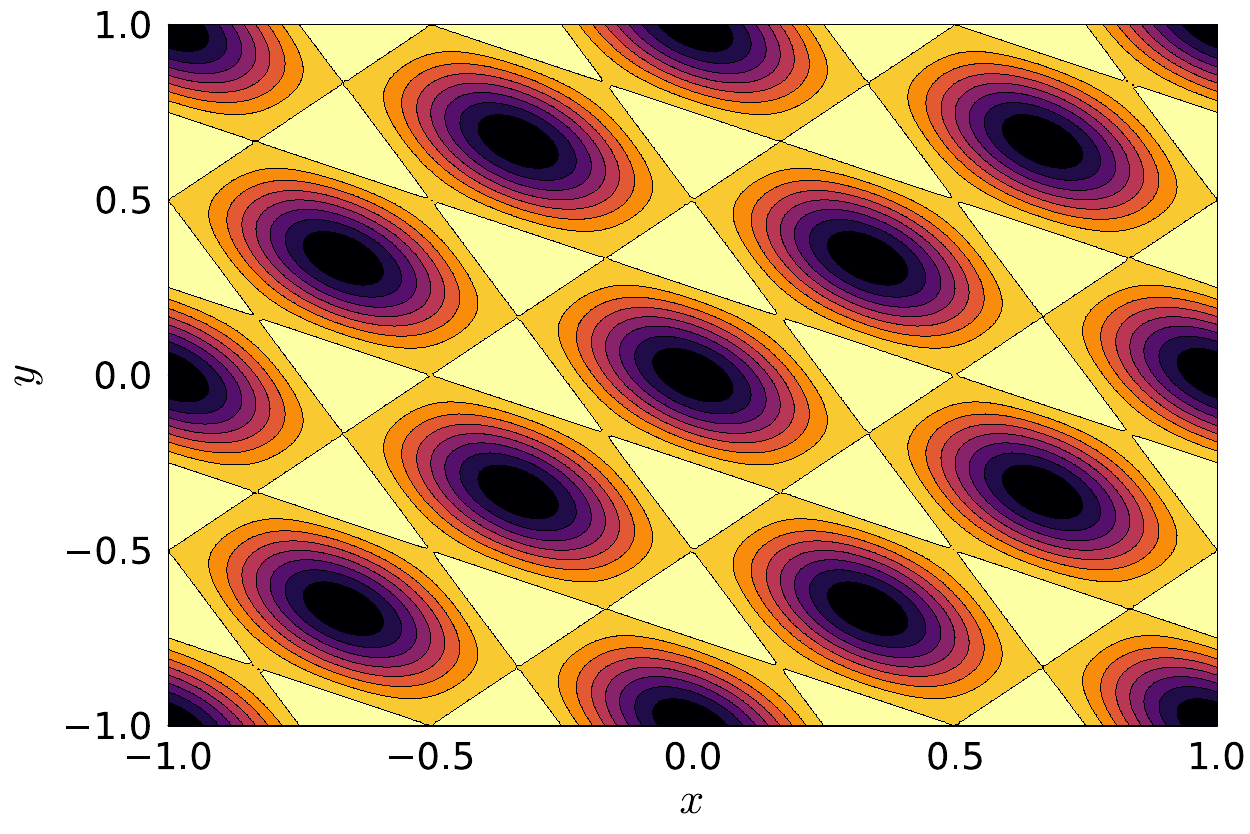}}
\caption[]{Contour plot of the potential~\eqref{eq:U_n3}, obtained by restricting 
the Kuramoto potential~\eqref{eq:KM-potential} to the hyperplane $\Sigma$, for $n=3$.
The hexagons are basins of attraction of the $0$-twisted state $u^{(0)}$ and its 
translates, while the triangles contain local maxima of the potential.}
\label{fig:potential_3d} 
\end{figure}

\begin{example}
\label{ex:n3} 
Consider the case $n=3$. Using the fact that $u_2 = -u_1-u_0$ if $u\in\Sigma$, we obtain 
\begin{equation}
\label{eq:U_n3} 
 U(u) = -\frac{K}{2\pi}
 \Bigbrak{\cos\bigpar{2\pi(u_1-u_0)} + \cos\bigpar{2\pi(u_0+2u_1)}
 + \cos\bigpar{2\pi(2u_0+u_1)}}\;.
\end{equation} 
Figure~\ref{fig:potential_3d} shows a contour plot of this function. The fundamental 
domain $\cD$ is a parallelogram with vertices $\pm(\frac16,\frac16)^\top$ and  
$\pm(\frac12,-\frac12)^\top$.
In $y$-coordinates, the expression for the potential becomes 
\begin{equation}
\label{eq:U_n3y} 
 U(y_0v_0 + y_1v_1) = -\frac{K}{2\pi}
 \Bigbrak{\cos\bigpar{2\pi(y_1-y_0)} + \cos\bigpar{2\pi y_1}
 + \cos\bigpar{2\pi y_0}}\;.
\end{equation} 
\end{example}

The following result gives an explicit expression for the coordinate transformation 
between the $u_i$ and the $y_j$.

\begin{lemma}
If $u = \sum_{i=0}^{n-2} y_i v_i$, then for any $i\in\set{0,\dots,n-2}$ one has 
\begin{equation}
\label{eq:u_to_y} 
 y_i \equiv u_i + \sum_{j=0}^{n-2} u_j\;, \qquad 
 u_i \equiv y_i - \frac1n \sum_{j=0}^{n-2} y_j\;,
\end{equation} 
where $\equiv$ stands for equality modulo $1$. 
\end{lemma}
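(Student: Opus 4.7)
The proof is a direct linear-algebra calculation based on the definition of the basis vectors, which can be written succinctly as $v_i = e_i - \frac{1}{n}\1$ for $i \in \{0,\dots,n-2\}$, where $e_i$ denotes the $i$-th standard basis vector of $\R^n$. I expect no serious obstacle; the only care needed is in tracking the index range (the sum runs up to $n-2$, not $n-1$) and in noting that although the underlying linear relations actually hold as equalities in $\R$, the formulas are most naturally interpreted modulo $1$ because the phase variables $u_i$ are angles and representatives are taken in the fundamental domain $\cD$.

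The plan is to verify the second identity directly, then invert it to obtain the first. First, expanding componentwise, if $u = \sum_{i=0}^{n-2} y_i v_i$, then for any $j \in \{0,\dots,n-1\}$ one has
\begin{equation*}
 u_j = \sum_{i=0}^{n-2} y_i \Bigpar{\delta_{ij} - \tfrac{1}{n}}\;.
\end{equation*}
Restricting to $j \in \{0,\dots,n-2\}$ yields $u_j = y_j - \frac{1}{n}\sum_{i=0}^{n-2} y_i$, which is exactly the second identity claimed in~\eqref{eq:u_to_y}.

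Second, to invert this relation, I would sum over $j = 0, \dots, n-2$:
\begin{equation*}
 \sum_{j=0}^{n-2} u_j
 = \sum_{j=0}^{n-2} y_j - \frac{n-1}{n}\sum_{i=0}^{n-2} y_i
 = \frac{1}{n} \sum_{i=0}^{n-2} y_i\;,
\end{equation*}
so $\sum_{i=0}^{n-2} y_i = n \sum_{j=0}^{n-2} u_j$. Substituting back into the expression for $u_j$ gives $u_j = y_j - \sum_{k=0}^{n-2} u_k$, i.e.\ $y_j = u_j + \sum_{k=0}^{n-2} u_k$, which is the first identity.

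Finally, to justify the use of $\equiv$, I would observe that since we only ever care about $u$ modulo the integer lattice $\Z^\Lambda$ and modulo the global phase shifts $S_\ph$, and since the representatives in $\cD$ are only determined up to these symmetries, the formulas derived above descend to equivalences modulo $1$ on the torus. In particular they remain valid when the $u_i$ or $y_j$ are replaced by any congruent representatives, which is the form used throughout the rest of the paper.
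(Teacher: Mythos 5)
Your proof is correct and takes essentially the same route as the paper: both derive the forward relation from $v_i = e_i - \tfrac1n\1$ restricted to the first $n-1$ coordinates (i.e., $u = My$ with $M = \one_{n-1} - \tfrac1n \1_{n-1}\1_{n-1}^\top$), and then invert. The paper simply states $M^{-1} = \one_{n-1} + \1_{n-1}\1_{n-1}^\top$ and checks it via $\1_{n-1}^\top\1_{n-1} = n-1$, whereas you recover the same inverse by summing the componentwise relation and substituting back, and you also spell out why the formulas hold modulo $1$, which the paper leaves implicit.
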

\begin{proof}
The transformation from $y$ to $u$ can be written $u = My$, where 
$M = \one_{n-1} - \frac1n \1_{n-1}\1_{n-1}^\top$. Here $\1_{n-1}$ denotes the vector of dimension $n-1$
having all components equal to $1$, and $\one_{n-1}$ is the identity matrix of dimension 
$n-1$. Using the fact that $\1_{n-1}^\top\1_{n-1} = n-1$, one easily checks that $M^{-1} = \one_{n-1} + \1_{n-1}\1_{n-1}^\top$.
\end{proof}


\subsection{Changing coordinates}
\label{subsec:coordinates}

The results on metastability outlined in Section~\ref{sec:stoch} do not 
apply directly to the stochastically forced KM~\eqref{eq:KM-forced}, 
because the Hessian matrix at any equilibrium point has a zero eigenvalue,
as a consequence of the translation invariance of the potential. 
This can be remedied by a change of variables compatible with our choice of 
fundamental domain. A similar system of coordinates was used in the analysis of synchronization in \cite[\S~5.1]{MedZhu12}.

Let
\begin{equation}
 q_0=\frac{1}{\sqrt{n}} \1\;, 
\end{equation} 
where $\1$ has been defined in~\eqref{eq:def1}, 
and choose $q_1, q_2, \dots, q_{n-1}\in \R^\Lambda$ such that $q_0,q_1, q_2, \dots, q_{n-1}$
form an orthonormal basis in $\R^\Lambda \simeq \R^n$. Let
\begin{equation}
 Q = (q_1,q_2, \dots, q_{n-1})\in \R^{n\times (n-1)}\;.
\end{equation} 
Then the change of variables 
\begin{equation}\label{change}
 u_t = \bar u_t q_0 + Qv_t, \qquad \bar u_t \in\R\;, \quad  v_t\in\R^{n-1}
\end{equation}
yields the system 
\begin{align}
 \6\bar u_t &= \sqrt{2\eps} \6\widehat W^{0}_t\;, \\
 \6v_t &= Q^\top \nabla U(Qv - \bar u q_0) + \sqrt{2\eps} \6\widehat W_t\;, 
\end{align}
where $\widehat W^{0}_t$ and $\widehat W_t$ are independent standard Brownian motions, 
of respective dimensions $1$ and $n-1$. 
Here we have used $\1^\top \nabla U = 0$, due to the translation invariance of $U$, 
the facts that $Q^\top Q = \one_{n-1}$, and 
$Q^T q_0 = 0$ (both because the change of variables is orthogonal), and the invariance 
of Brownian motion under rotations. 
Furthermore
\begin{equation}
Q^\top \nabla U(Qv - \bar u q_0)
= Q^\top \nabla U(Qv)
= \nabla_v V(v)\;,
\end{equation}
where $V(v) = U(Qv)$, and we have again used translation invariance of $U$. 
It follows that the dynamics can be described by the set of equations 
\begin{align} 
\label{new-diag}
\6\bar u_t &= \sqrt{2\eps} \6\widehat W^{0}_t\;, \\
 \6v_t &= \nabla_v V(v_t) + \sqrt{2\eps} \6\widehat W_t\;.  
\end{align}
In this system, the evolutions of $\bar u_t$ and $v_t$ are completely decoupled. The component 
$\bar u_t$ performs a simple Brownian motion, of variance $2\eps t$, while $v_t$ obeys an $(n-1)$-dimensional SDE in the hyperplane $\Sigma$, which will in general 
be non-degenerate. The dynamics of $v_t$ can be mapped to an SDE 
on $\cD$ with periodic boundary conditions. 

\begin{remark}
\label{rem:Sigma} 
There is one difference between the equation on $\Sigma$ and its projection on $\cD$. 
While the latter admits an invariant probability measure, with density proportional to 
$\e^{-V(v)/\eps}$, the former does not admit such a measure, because $\Sigma$ is unbounded. 
This is because on large scales, the behavior of the process in $\Sigma$ is closer to that 
of a random walk. 
\end{remark}

In what follows, it will be more convenient to work in variables $u$ instead of $v$, because 
this simplifies the computation of equilibrium points and Hessian matrices. The two points of 
view are, however, equivalent, if one disregards the zero eigenvalues of the Hessians in the 
direction $\1$. 


\section{The equilibria}
\label{sec:equilibria} 

In this section, we analyze the equilibria of~\eqref{eq:Kuramoto}. These are critical points 
of the potential $U$, meaning that they satisfy $\nabla U(u) = 0$. The Hessian matrix of $U$ 
at a critical point has real eigenvalues, one of which is $0$ due to the translation invariance 
of $U$. The~\emph{Morse index} of a critical point is the number of strictly 
negative eigenvalues of the Hessian. 

We will be particularly interested in two types of equilibria, which are important for
the description of the metastable transitions in the stochastically forced model \eqref{eq:KM-forced}.
The first type consists in critical points of Morse index $0$, which are local minima of $U$. 
Because of the vanishing eigenvalue, they are neutrally stable in the $n$-dimensional phase 
space, but asymptotically stable for the dynamics restricted to the hyperplane $\Sigma$.
For that reason, we are going to call them \emph{sinks}. The second type of important critical 
points are those having Morse index $1$. We will call them \emph{$1$-saddles} for brevity. 
The important feature of the potential landscape in relation to understanding stochastic dynamics is how sinks are connected by minimal paths, in particular, 
which $1$-saddles lie on these paths. 

We restrict to the nearest-neighbor coupling case $S=\{-1,1\}$, for which we can present a more complete picture.


\subsection{Classification of equilibria}
\label{ssec:classification} 

An equilibrium of~\eqref{eq:Kuramoto} has to satisfy the equations 
\begin{equation}
\label{eq:equil} 
 \sin\bigpar{2\pi(u_{i+1} - u_i)}
 = \sin\bigpar{2\pi(u_i - u_{i-1})}\;, 
 \qquad 
 i \in\Lambda\;.
\end{equation} 
Let us write 
\begin{equation}
 a_i = (u_{i+1} - u_i) \pmod{1}\;.
\end{equation} 
An equilibrium point is uniquely determined by the tuple $(a_0,\dots,a_{n-1})$. 
Note that due to the periodic boundary conditions, the sum $\omega = a_0 + \dots + a_{n-1}$ 
is necessarily an integer, which must belong to $\set{0,1,\dots,n-1}$. This integer 
can be interpreted as a \emph{winding number}. For instance, the winding number of a 
$q$-twisted state is $q \pmod{n}$. 

Relation~\eqref{eq:equil} implies that for any $i\in\Lambda$, there are two options:
\begin{itemize}
\item   either $a_{i+1} = a_i$, 
\item   or $a_{i+1} = \frac12 - a_i \pmod{1}$.  
\end{itemize}

As a consequence, one of the following two cases holds: 

\begin{enumerate}
\item   All $a_i$ have the same value $a$. Then the winding number 
is $\omega = na$. This corresponds to a $q$-twisted state, with $\omega = na = q\pmod{n}$. 
For future reference, we set $p = n$. 

\item   The $a_i$ take two different values $a$ and $\hat a$, related by 
$\hat a = \frac12 - a \pmod{1}$. Notice that $\cos(2\pi a)$ and $\cos(2\pi \hat a)$ 
have opposite sign. If $a\notin\set{\frac14,\frac34}$, we may assume that $\cos(2\pi a) > 0$, 
while $\cos(2\pi \hat a) < 0$.
Then we set
\begin{equation}
 p = \#\set{i\colon a_i = a}
 = \#\set{i\colon\cos(2\pi a_i) > 0}\;.
\end{equation} 
Since $a \neq \hat a$, we necessarily have $p\in\set{1,\dots,n-1}$, and it has to satisfy 
the condition 
\begin{equation}
\label{eq:condition_omega} 
 pa + (n-p)\hat a = \omega\;.
\end{equation} 
Note that $\hat a = \frac12 - a$ if $a < \frac12$, and $\hat a = \frac32 - a$ otherwise. 
\end{enumerate}

In both cases, since we assume $S = \set{-1,1}$, the potential can be written
\begin{equation}
\label{eq:pot_nn} 
 U(u) = -\frac{K}{2\pi} \sum_{i\in\Lambda} \cos(2\pi(u_{i+1}-u_i))
 = -\frac{K}{2\pi} \sum_{i\in\Lambda} \cos(2\pi a_i)\;.
\end{equation} 
Note that 
\begin{equation}
 \cos\bigpar{2\pi a_i} 
 = \sigma_i \cos(2\pi a) 
 \qquad 
 \text{where } 
 \sigma_i = 
 \begin{cases}
  1 & \text{if $a_i = a$\;,}\\
  -1 & \text{if $a_i = \hat a$\;.}
 \end{cases}
\end{equation} 
Therefore, we have 
\begin{equation}
\label{eq:pot_nn2} 
 U(u) = -\frac{K}{2\pi} \cos(2\pi a) \sum_{i\in\Lambda} \sigma_i
 = -\frac{K}{2\pi}  (2p - n)\cos(2\pi a)\;.
\end{equation} 
The stability of an equilibrium $u$ is determined by the Hessian matrix of the potential 
at $u$, which can be written 
\begin{equation}
\label{eq:Hessian} 
\frac{\partial^2U}{\partial u^2}
= 2\pi K \cos\bigpar{2\pi a} M(\sigma)\;,
\end{equation} 
where 
\begin{equation}
M(\sigma) = 
\begin{pmatrix}
\sigma_{n-1} + \sigma_0 & -\sigma_0 & 0 & \dots & \dots & 0 & -\sigma_{n-1} \\
-\sigma_0 & \sigma_0 + \sigma_1 & -\sigma_1 & \ddots &  &  & 0 \\
0 & -\sigma_1 & \sigma_1 + \sigma_2 & \ddots & \ddots &  & \vdots \\  
\vdots & \ddots & \ddots & \ddots & \ddots & \ddots & \vdots \\
\vdots & & \ddots & \ddots & \ddots & -\sigma_{n-3} & 0 \\
0 &  &  & \ddots & -\sigma_{n-3} & \sigma_{n-3} + \sigma_{n-2} & -\sigma_{n-2} \\
-\sigma_{n-1} & 0 & \dots & \dots & 0 & -\sigma_{n-2} & \sigma_{n-2} + \sigma_{n-1}
\end{pmatrix}\;.
\end{equation} 


\subsection{The sinks}

For a $q$-twisted state $u^{(q)}$, all $\sigma_i$ are equal to $1$, and therefore $p = n$. 
The value~\eqref{eq:pot_nn2} of the potential reduces to 
\begin{equation}
\label{eq:potential_sink} 
 U(u^{(q)}) = -\frac{K}{2\pi} n \cos\biggpar{\frac{2\pi q}{n}}\;.
\end{equation} 

\begin{lemma} 
\label{lem:sinks} 
A $q$-twisted state is stable provided $\abs{q} < \frac n4$.
That is, the Hessian of $U$ at $u^{(q)}$ has zero as a simple eigenvalue, while all other
eigenvalues are strictly positive. 
\end{lemma}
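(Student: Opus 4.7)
The plan is to exploit the circulant structure of the Hessian at a $q$-twisted state, which makes it diagonalizable by the discrete Fourier transform.

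First, I would observe that at $u = u^{(q)}$, one has $a_i = q/n$ for every $i\in\Lambda$, so all $\sigma_i = 1$ and the matrix $M(\sigma)$ in \eqref{eq:Hessian} reduces to the standard graph Laplacian
\begin{equation}
 L = M(\1) = 2\,\one_n - (S_+ + S_-)\;,
\end{equation}
where $S_\pm$ denote the cyclic shift operators on $\R^\Lambda$. Hence
\begin{equation}
 \frac{\partial^2U}{\partial u^2}(u^{(q)}) = 2\pi K \cos\!\Bigpar{\frac{2\pi q}{n}} L\;.
\end{equation}
So the statement reduces to two facts: (i) the overall prefactor $\cos(2\pi q/n)$ is strictly positive precisely when $|q| < n/4$, and (ii) $L$ is positive semi-definite with a simple zero eigenvalue.

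For (i), the condition $|q|<n/4$ is equivalent to $2\pi|q|/n < \pi/2$, which is exactly the condition $\cos(2\pi q/n)>0$. For (ii), since $L$ is a real symmetric circulant matrix, I would diagonalize it using the discrete Fourier basis $e_k = \frac{1}{\sqrt{n}}(1,\omega^k,\omega^{2k},\dots,\omega^{(n-1)k})^\top$, with $\omega = \e^{2\pi\icx/n}$ and $k\in\set{0,1,\dots,n-1}$. A direct computation shows $L e_k = \lambda_k e_k$ with
\begin{equation}
 \lambda_k = 2 - 2\cos\!\Bigpar{\frac{2\pi k}{n}} = 4\sin^2\!\Bigpar{\frac{\pi k}{n}}\;,
\end{equation}
which is $0$ for $k=0$ and strictly positive for $k\in\set{1,\dots,n-1}$. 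The zero eigenvalue is simple, with eigenvector $\1$, corresponding precisely to the translation invariance of $U$.

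Combining (i) and (ii), under the hypothesis $|q|<n/4$ the Hessian is a strictly positive multiple of $L$, so it inherits a simple zero eigenvalue (in the direction $\1$) and all other eigenvalues strictly positive. There is essentially no hard step here; the only subtlety worth flagging is the borderline values: the case $|q|=n/4$ would make the prefactor vanish, and for $|q|>n/4$ the prefactor becomes negative, turning the positive eigenvalues of $L$ into negative ones and destabilizing the state — matching the sharp threshold stated in the lemma.
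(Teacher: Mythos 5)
Your argument is correct and is essentially the paper's proof: at a $q$-twisted state all $\sigma_i=1$, the Hessian is a scalar multiple (by $2\pi K\cos(2\pi q/n)$) of the circulant graph Laplacian, which is diagonalized by the discrete Fourier basis with eigenvalues $4\sin^2(\pi k/n)$, and the condition $|q|<n/4$ is exactly positivity of the prefactor. The only cosmetic difference is that you take the Laplacian with the positive sign convention $2\one - S_+ - S_-$, whereas the paper's matrix $L$ in its display has $-2$ on the diagonal, but the resulting eigenvalue computation is identical.
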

\begin{proof}
It follows directly from~\eqref{eq:Hessian} that 
\begin{equation}
 \frac{\partial^2U}{\partial u^2}
= - 2\pi K \cos\bigpar{2\pi a} L\;,
\end{equation} 
where
\begin{equation}
\label{eq:L} 
 L = 
 \begin{pmatrix}
  -2 & 1 & 0 & \dots & \dots & 0 & 1 \\
  1 & -2 & 1 & 0 & \dots & \dots & 0 \\
  0 & 1 & -2 & 1 & 0 & \dots & 0 \\
  \vdots & \ddots & \ddots & \ddots & \ddots & \ddots & \vdots \\
  0 & \dots & 0 & 1 & -2 & 1 & 0 \\
  0 & \dots & \dots & 0 & 1 & -2 & 1 \\
  1 & 0 & \dots & \dots & 0 & 1 & -2
 \end{pmatrix} 
\end{equation} 
is the matrix of the discrete Laplacian with periodic boundary conditions. 
This is a circulant matrix, whose eigenvalues can be computed by discrete Fourier 
transform. 
{Namely, for $k\in\Z/n\Z$ let $v_k$ be the vector with components $v_{k,j} = 
\e^{2\pi\icx kj/n}$. Then 
\begin{align}
 \bigpar{Lv_k}_j 
 &= \e^{2\pi\icx k(j-1)/n} -2\e^{2\pi\icx kj/n}  + \e^{2\pi\icx k(j+1)/n} \\
 &= \bigpar{\e^{-2\pi\icx k/n} - 2 + \e^{2\pi\icx k/n}} \e^{2\pi\icx kj/n} \\
 &= 2 \biggpar{\cos\biggpar{\frac{2\pi k}{n}}-1} v_{k,j}
 = -4\sin^2\biggpar{\frac{\pi k}{n}}v_{k,j}\;.
\end{align} 
See also~\cite[Theorem~3.4]{MedTan15b} for a similar computation in the more 
general setting where the coupling goes beyond nearest neighbors. 
The eigenvalues of $L$ are thus given by 
}
\begin{equation}
\label{eq:ev_lambdak0} 
 -\lambda_k^0 = -4\sin^2 \biggpar{\frac{\pi k}{n}}\;, \qquad 
 k = 0, \dots, n-1\;.
\end{equation} 
The eigenvalues of the Hessian are thus given by 
\begin{equation}
\label{eq:lambda_sink} 
 -\lambda_k = 
 -8\pi K \cos\biggpar{\frac{2\pi q}{n}} 
 \sin^2 \biggpar{\frac{\pi k}{n}}\;.
\end{equation} 
If $\abs{q} < n/4$, all $\lambda_k$ except $\lambda_0 = 0$ are strictly positive. 
\end{proof}

The number of stable $q$-twisted states is equal to 
\begin{equation}
\label{eq:N0} 
 N_0 = 1 + 2 \max\Bigset{i\in\N_0\colon i < \frac n4}
 = 2\Bigl\lceil \frac n4 \Bigr\rceil - 1\;.
\end{equation} 

\begin{remark}
The proof shows that the case $\abs{q} = \frac n4$ is degenerate, since the Hessian 
matrix is the zero matrix, while in the case $\abs{q} > \frac n4$, 
the equilibrium is a local maximum of the potential, since all nonzero eigenvalues 
of the Hessian matrix are strictly negative. 
\end{remark}

We still have to examine the effect of the symmetries ${C_j}$ and $I$, which are not taken 
into account by the fundamental domain $\cD$. 

\begin{lemma}
For each $q\in\Z/n\Z$, there is exactly one $q$-twisted state in the fundamental domain $\cD$.
\end{lemma}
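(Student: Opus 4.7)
My plan is to reduce the statement to the fundamental-domain property of $\cD$: each equivalence class of $\R^\Lambda$ under the symmetry group $\cG$ generated by $T_k$ ($k\in\Z^\Lambda$) and $S_\ph$ ($\ph\in\R$) contains a unique representative in $\cD$. Given this, I only need to check that, for fixed $q$, the set of $q$-twisted states forms a single $\cG$-orbit.

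The orbit claim is immediate from the definition \eqref{q-twist}: one has $u^{(q,\ph')} = u^{(q,\ph)} + (\ph'-\ph)\1 = S_{\ph'-\ph}(u^{(q,\ph)})$, so the family $\{u^{(q,\ph)} : \ph\in\R\}$ is a single $S_\ph$-orbit, a fortiori a single $\cG$-orbit. In particular all of these states are equivalent to a single point of $\cD$, assuming $\cD$ is a fundamental domain.

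To establish that $\cD$ is a $\cG$-fundamental domain, I would proceed in two steps. First, since $\Sigma$ is the orthogonal complement of $\1$, every $u\in\R^\Lambda$ has a unique representative modulo $S_\ph$ in $\Sigma$, namely the orthogonal projection $u - \frac{1}{n}\pscal{u}{\1}\1$ obtained by choosing $\ph = -\frac1n\pscal{u}{\1}$. Second, on $\Sigma$ the group of integer translations $T_k$ descends to translation by the projected lattice $L = \{\sum_i k_i v_i : k\in\Z^\Lambda\}$. Using the identity $\sum_{i=0}^{n-1}v_i=0$, any element of $L$ can be rewritten as $\sum_{i=0}^{n-2}(k_i-k_{n-1})v_i$, with the new coefficients ranging independently over $\Z$. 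Hence $\{v_0,\dots,v_{n-2}\}$ is a $\Z$-basis of $L$, and the half-open parallelepiped $\cD = \{\sum_{i=0}^{n-2}y_iv_i : -\tfrac12\leqs y_i<\tfrac12\}$ is a fundamental domain for $L$ in $\Sigma$, hence a $\cG$-fundamental domain in $\R^\Lambda$.

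Combining these two ingredients yields the lemma: the $\cG$-orbit of $u^{(q,0)}$ meets $\cD$ in precisely one point, which is by construction the unique representative of the $q$-twisted-state family. The one step requiring any real care is the verification that $\{v_0,\dots,v_{n-2}\}$ is a $\Z$-basis (and not merely an $\R$-basis) of $L$; this is where the dependence relation $\sum_{i=0}^{n-1}v_i=0$ is used to eliminate the redundant generator $v_{n-1}$. Everything else is bookkeeping, and no explicit computation of $y$-coordinates or Hessians is needed.
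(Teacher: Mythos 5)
Your proof is correct, but it takes a genuinely different route from the paper's. The paper picks the explicit representative of $u^{(q)}$ on $\Sigma$, computes its $y$-coordinates via~\eqref{eq:u_to_y}, applies $C_1$, and verifies by direct calculation that the two $y$-vectors differ by an integer translation, then argues (somewhat loosely) that cyclic permutations and inversion are the only possible sources of further copies. You instead split the claim into two clean structural pieces: (a)~for fixed $q$ the family $\set{u^{(q,\ph)}}_\ph$ is a single orbit of the group $\cG = \langle T_k, S_\ph\rangle$, which is immediate from $u^{(q,\ph')} = S_{\ph'-\ph}u^{(q,\ph)}$; and (b)~$\cD$ is a genuine fundamental domain for $\cG$. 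Your proof of (b) --- projecting onto $\Sigma$ to kill $S_\ph$, observing that $\cG$-equivalence on $\Sigma$ reduces to translation by the projected lattice $L$, and using $\sum_{i=0}^{n-1}v_i = 0$ to show that $(v_0,\dots,v_{n-2})$ is a $\Z$-basis of $L$ --- is a verification the paper omits entirely when it introduces $\cD$, so your argument actually closes a small gap in the paper's exposition. Your route also clarifies why $C_p$ and $I$ need not be examined at all: $C_p u^{(q,\ph)}$ is again a $q$-twisted state in the same $\cG$-orbit, while $Iu^{(q,\ph)} = u^{(-q,-\ph)}$ is a $(-q)$-twisted state, so neither can produce a second $q$-twisted point in $\cD$ (the paper's remark that the representative of $u^{(q)}$ on $\Sigma$ is ``invariant under $I$'' is, as a statement about a point of $\cD$, not accurate for general $q$, though this does not affect the lemma). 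The trade-off is that the paper's coordinate computation sets up the exact formulas ($y^{(q)}$, $C_1 y^{(q)}$) reused in the following lemma about the $1$-saddles, where the conclusion is the opposite (the $n$ cyclic shifts are all distinct in $\cD$), so the paper's approach has the virtue of being uniform across the two cases.
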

\begin{proof}
A representative of $u^{(q)}$ in the hyperplane $\Sigma$ is given by 
\begin{equation}
u^{(q)} = 
 \frac1{2n} \bigpar{-(n-1)q, -(n-3)q, \dots, (n-3)q, (n-1)q}^\top\;.
\end{equation} 
Note that this is invariant under the inversion $I$. Furthermore, since 
the sum of the first $n-1$ components is $-\frac{(n-1)q}{2n}$, it follows from~\eqref{eq:u_to_y}
that the corresponding $y$-coordinates are 
\begin{align}
 y^{(q)} &= \frac1n \bigpar{-(n-1)q, -(n-2)q, \dots, -q}^\top \\
 &= \frac1n \bigpar{q, 2q, \dots, (n-1)q}^\top 
 + (q,q,\dots,q)^\top\;.
\end{align} 
Consider now the cyclic permutation 
\begin{equation}
 C_1u^{(q)} = 
 \frac1{2n} \bigpar{-(n-3)q, -(n-5)q, \dots, (n-3)q, (n-1)q, -(n-1)q}^\top\;.
\end{equation} 
The sum of its $n-1$ first components is $\frac{n-1}{2n}q$. Using~\eqref{eq:u_to_y}, we obtain that its $y$-coordinates are 
\begin{equation}
 C_1y^{(q)} = \frac1n \bigpar{q, 2q, \dots, (n-1)q} ^\top
 \sim y^{(q)}\;.
\end{equation}
Since $y^{(q)}$ and $C_1y^{(q)}$ are related by an integer translation, they 
correspond to the same point in the fundamental domain $\cD$. Cyclic permutations and the inversion 
being the only candidates for possible other $q$-twisted states in $\cD$, the claim follows. 
\end{proof}


\subsection{The saddles} 
\label{ssec:saddles}

Consider now the case where $p = n-1$ of the $\sigma_i$ are equal to $1$, 
say $\sigma = (1, \dots, 1, -1)$. These states have a \lq\lq jump\rq\rq\ 
between $i=n-1$ and $i=0$. Other equilibria with the same Morse index can 
be obtained by cyclic permutation of the coordinates, which amounts to 
moving the location of the jump. 

\begin{lemma}
If $n\geqs 5$, equilibria with $\sigma = (1, \dots, 1, -1)$ are of the form 
\begin{equation}
\label{eq:qhat} 
 u_i^{(r)} 
 = \frac{\hat q i}{n}\;,
 \quad i\in\Lambda\;, 
 \qquad \hat q = \frac{rn}{n-2}\;, 
\end{equation} 
where $r \in \Z+\frac12$ is any half integer satisfying 
\begin{equation}
 -\frac n4 + \frac12 < r < \frac n4 - \frac12\;.
\end{equation} 
\end{lemma}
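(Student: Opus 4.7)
The plan is to turn the sign pattern $\sigma=(1,\dots,1,-1)$ into explicit equations for the differences $a_i$ and then integrate them around the cycle, using the winding-number constraint to fix the admissible values.

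First, I would recall from Section~\ref{ssec:classification} that specifying $\sigma$ is equivalent to specifying which $a_i$ equal $a$ and which equal $\hat a\equiv \tfrac12-a\pmod 1$, with the convention $\cos(2\pi a)>0$. Under $\sigma=(1,\dots,1,-1)$, this gives $a_0=a_1=\dots=a_{n-2}=a$ and $a_{n-1}=\hat a$. Using the global phase-shift symmetry I would set $u_0=0$, and then telescoping the identities $u_{i+1}\equiv u_i+a\pmod 1$ for $i=0,\dots,n-2$ immediately yields
\begin{equation*}
u_i\equiv i a\pmod 1, \qquad i=0,1,\dots,n-1.
\end{equation*}

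Next, I would impose the closing condition on the cycle. The winding number $\omega=a_0+\cdots+a_{n-1}=(n-1)a+\hat a$ is an integer in $\{0,1,\dots,n-1\}$. Writing $\hat a=\tfrac12-a+m$ with $m\in\{0,1\}$ chosen so that $\hat a\in[0,1)$, this becomes
\begin{equation*}
(n-2)a=\omega-\tfrac12-m.
\end{equation*}
Defining $r:=\omega-m-\tfrac12\in\Z+\tfrac12$, I obtain $a=r/(n-2)$ and hence
\begin{equation*}
u_i\equiv \frac{ri}{n-2}=\frac{\hat q\, i}{n}\pmod 1,\qquad \hat q=\frac{rn}{n-2},
\end{equation*}
which is exactly the stated form~\eqref{eq:qhat}. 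Conversely, any $r\in\Z+\tfrac12$ produces $a_i$ satisfying the original equilibrium equation~\eqref{eq:equil} with the correct pattern $\sigma$, so this classifies all such equilibria.

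The range of $r$ would then follow from the non-degeneracy convention $\cos(2\pi a)>0$, equivalently $a\in(-\tfrac14,\tfrac14)\pmod 1$. Choosing the canonical representative $r/(n-2)\in(-\tfrac14,\tfrac14)$ gives $-(n-2)/4<r<(n-2)/4$, i.e.\ $-n/4+\tfrac12<r<n/4-\tfrac12$, and I would also check that $a\in\{\tfrac14,\tfrac34\}$ (the degenerate case excluded earlier) corresponds exactly to the missing boundary values. The hypothesis $n\geqs 5$ enters at this final step: only for $n\geqs 5$ does this open interval contain any half-integer, and only then can one be sure that the cyclic permutations generating the other $n-1$ $1$-saddles in the orbit are genuinely distinct in the fundamental domain $\cD$.

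The step I expect to need the most care is verifying the one-to-one correspondence between pairs $(\omega,m)$ and half-integers $r$ in the stated range, so that different $(\omega,m)$'s give distinct equilibria modulo the residual symmetries (integer translations and the global phase shift already modded out). Beyond this bookkeeping, the rest is a direct computation.
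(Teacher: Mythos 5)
Your proposal is correct and follows essentially the same route as the paper: telescope the difference equations to get $u_i=ia$, impose the integer winding number $(n-1)a+\hat a=\omega$ to obtain $a=r/(n-2)$ with $r$ a half-integer defined mod $n-2$, and use the sign convention $\cos(2\pi a)>0$ to pin down the canonical range. The paper handles the two subcases $a\in[0,\tfrac14)$ and $a\in(\tfrac34,1)$ separately (the latter by replacing $a$ with $a-1$), whereas you unify them through the parameter $m\in\set{0,1}$ and a choice of representative modulo $n-2$; that is a cosmetic difference, though you should make the modular reduction of $r$ in the $m=1$ case explicit, since $r=\omega-\tfrac32$ there lies outside the stated interval until you shift by $n-2$. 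One small inaccuracy: your second justification for the hypothesis $n\geqs 5$ (distinctness of the cyclic permutations in $\cD$) belongs to a later lemma; for the present statement, $n\geqs 5$ is needed only so that the open interval $\bigpar{-\tfrac n4+\tfrac12,\tfrac n4-\tfrac12}$ contains a half-integer at all.
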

\begin{proof}
According to the discussion of Section~\ref{ssec:classification}, there are two cases 
to consider, depending on the value of $a$. Recall that $a\in[0,1)$ satisfies $\cos(2\pi a) > 0$.

\begin{itemize}
\item   The first case occurs if $0\leqs a < \frac14$. Then $\hat a = \frac12 - a$, and 
Condition~\eqref{eq:condition_omega} with $p = n-1$ becomes 
\begin{equation}
 (n-2)a = \omega - \frac12
 \quad \Rightarrow \quad 
 a = \frac{2\omega - 1}{2(n - 2)}\;.
\end{equation} 
Setting $r = \omega - \frac12$ yields the expression~\eqref{eq:qhat} for the equilibrium. 
The condition $0\leqs a < \frac14$ becomes 
\begin{equation}
 \frac12 \leqs r < \frac n4 - \frac12\;.
\end{equation} 

\item   The second case occurs if $\frac34 < a < 1$. Then $\hat a  = \frac32 - a$, and 
Condition~\eqref{eq:condition_omega} with $p = n-1$ becomes 
\begin{equation}
 (n-2)a = \omega - \frac32
 \quad \Rightarrow \quad 
 a = \frac{2\omega - 3}{2(n - 2)}\;.
\end{equation} 
Here it is more convenient to view the state as having a \lq\lq negative slope\rq\rq. 
This amounts to replacing $a$ by $a-1$, and setting $r = \omega - n + \frac12$ with 
\begin{equation}
 -\frac n4 + \frac12 < r \leqs -\frac12\;,
\end{equation} 
which yields again the expression~\eqref{eq:qhat} for the equilibrium. 
\qed
\end{itemize}
\renewcommand{\qed}{}
\end{proof}

\begin{remark}
In the case $n=3$, there is no admissible equilibrium with $\sigma=(1,1,-1)$. However, the 
r\^ole of $1$-saddle is played by the equilibrium with $\sigma=(1,-1,-1)$, which is also of the form~\eqref{eq:qhat} with $r=\frac12$ and $\hat q=\frac32$. It corresponds to $a=0$ and $\hat a = \frac12$. That state is a $1$-saddle because a direct computation shows that the eigenvalues 
of $M(\sigma)$ are $-1$, $0$ and $3$. 
The case $n=4$ is degenerate, as for the sinks, because then the Hessian matrix is identically zero.
\end{remark}

The value of the potential at the equilibria $u^{(r)}$ is given, according to~\eqref{eq:pot_nn2}, by 
\begin{equation}
\label{eq:potential_saddle} 
 U(u^{(r)})
 = -\frac{K}{2\pi} (n-2)\cos\biggpar{2\pi\frac{r}{n-2}}\;.
\end{equation} 

\begin{proposition}
\label{thm:saddle}
For $-\frac n4 + \frac12 < r < \frac{n}{4} - \frac12$, $u^{(r)}$ is a $1$-saddle of \eqref{eq:Kuramoto}, i.e., the Hessian  $\frac{\partial^2U}{\partial u^2}\bigpar{u^{(r)}}$ 
has one negative eigenvalue, the zero eigenvalue, and $n-2$ positive eigenvalues.
\end{proposition}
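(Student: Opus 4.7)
The plan is to exploit the factorization~\eqref{eq:Hessian}, which expresses the Hessian of $U$ at $u^{(r)}$ as $2\pi K\cos(2\pi a)M(\sigma)$ for $\sigma = (1,\ldots,1,-1)$. Since $|a| = |r|/(n-2) < 1/4$ on the admissible range of $r$, one has $\cos(2\pi a) > 0$, so the Hessian and $M(\sigma)$ have identical signatures. The proof therefore reduces to showing that $M(\sigma)$ has exactly one strictly negative eigenvalue, a simple zero eigenvalue, and $n-2$ strictly positive eigenvalues.

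My main structural step is to recognize $M(\sigma)$ as a rank-$1$ negative perturbation of the ``all-ones'' case $M(\1) = -L$, where $L$ is the discrete Laplacian of~\eqref{eq:L}. Flipping $\sigma_{n-1}$ from $+1$ to $-1$ alters exactly the four entries in positions $(0,0)$, $(n-1,n-1)$, $(0,n-1)$, $(n-1,0)$, and a direct check assembles those changes into
\begin{equation}
 M(\sigma) = -L - 2(e_0 - e_{n-1})(e_0 - e_{n-1})^\top,
\end{equation}
where $e_i$ denotes the $i$-th canonical basis vector of $\R^\Lambda$. From the proof of Lemma~\ref{lem:sinks}, the spectrum of $-L$ is $\set{4\sin^2(\pi k/n)\colon k=0,\dots,n-1}$, with the zero eigenvalue simple and the next smallest equal to $4\sin^2(\pi/n) > 0$. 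I would then invoke the Cauchy interlacing theorem for a rank-$1$ negative perturbation applied to $M(\sigma) = -L - 2vv^\top$, which gives, for the eigenvalues $\mu_1 \leqs \mu_2 \leqs \cdots \leqs \mu_n$ of $M(\sigma)$, the bounds $\mu_1 \leqs 0$ and $\mu_k \geqs 4\sin^2(\pi/n) > 0$ for every $k \geqs 3$.

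To separate $\mu_1$ and $\mu_2$, I would use two complementary inputs. Translation invariance of $U$ forces each row of $M(\sigma)$ to sum to zero, so $\1 \in \ker M(\sigma)$ and at least one of $\mu_1,\mu_2$ equals $0$. Then, evaluating the quadratic form on $w = e_0 - e_{n-1}$ and using $w^\top(-L)w = \sum_{i\in\Lambda}(w_{i+1}-w_i)^2 = 6$ together with $(v^\top w)^2 = 4$, one obtains $w^\top M(\sigma) w = 6-8 = -2 < 0$, forcing $\mu_1 < 0$ strictly and hence $\mu_2 = 0$; this produces the required signature $(1,1,n-2)$. The step I expect to require the most care is the clean extraction and verification of the rank-$1$ identity uniformly in $n$, since the cyclic boundary conditions make the interplay between $\sigma_{n-1}$ and $\sigma_0$ a little delicate; once that identity is in hand, the interlacing bound and the short quadratic-form computation are essentially routine. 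The degenerate small-$n$ cases $n=3,4$ already flagged in the preceding remark would of course need to be treated separately.
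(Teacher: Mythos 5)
Your proof is correct, and it takes a genuinely different route from the paper. Your rank-1 identity $M(\sigma) = -L - 2\psi\psi^\top$ with $\psi = e_0 - e_{n-1}$ checks out (flipping $\sigma_{n-1}$ changes exactly the four entries you list, in exactly the way $-2\psi\psi^\top$ does; equivalently $-M(\sigma) = D + N = (L+\psi\psi^\top) + \psi\psi^\top = L + 2\psi\psi^\top$, so $M(\sigma)=-L-2\psi\psi^\top$), and from there the pieces fit together: Weyl/Cauchy interlacing for a rank-1 negative perturbation of the positive semidefinite $-L$ gives $\mu_1\leqs 0$ and $\mu_k\geqs 4\sin^2(\pi/n)>0$ for $k\geqs3$; the row-sum-zero structure of $M(\sigma)$ (translation invariance) pins one eigenvalue at $0$; the evaluation $\psi^\top M(\sigma)\psi = 6 - 8 = -2 < 0$ forces $\mu_1<0$ strictly; and then the $0$ eigenvalue can only be $\mu_2$. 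The paper instead writes the Hessian as $-2\pi K\cos(\cdot)\,M$ with $M = D + N$, where $D$ is the \emph{Neumann} Laplacian and $N = \psi\psi^\top$, diagonalizes $D$ explicitly, and derives a secular equation $F(\nu) = \sum_{k\text{ odd}}\pscal{\psi}{\ph^0_k}^2/(\nu^0_k - \nu)=1$ for the odd-indexed eigenvalues of $-M$, whose monotone-between-poles structure gives the interlacing; it then uses $\psi^\top M\psi = 2 > 0$ to place one eigenvalue below zero. Your argument is the more elementary one — no secular equation is needed, and the soft interlacing inequality together with two scalar facts (kernel vector, one negative quadratic value) does the whole job. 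What the paper's heavier machinery buys is a more detailed picture of \emph{where} each eigenvalue sits: the parity split, the explicit interlacing chain $\nu_1 < 0 = \nu_0^0 < \nu_1^0 < \nu_3 < \nu_3^0 < \cdots$, and the secular equation itself, all of which are reused later (in the proof that $\nu_1$ is exponentially close to $-\tfrac43$, Proposition 5.3) to get the sharp Eyring–Kramers prefactor. So for Proposition~\ref{thm:saddle} alone your route is cleaner, but the paper's framework is an investment that pays off downstream. One minor note: for $n=3,4$ the admissible range $-\tfrac n4+\tfrac12<r<\tfrac n4-\tfrac12$ contains no half-integer, so the proposition is vacuous there and no separate treatment is actually required.
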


To prove the proposition, we observe that~\eqref{eq:Hessian} implies that 
the Hessian matrix of $U$ at these states is given by 
\begin{equation}
\label{eq:Hessian_saddle} 
 \frac{\partial^2U}{\partial u^2}\bigpar{u^{(r)}} 
 = -2\pi K \cos\biggpar{\frac{2\pi\hat q}{n}} M\;, \qquad 
 M = D + N\;, 
\end{equation} 
where $D$ is the matrix of the discrete Laplacian with Neumann boundary 
conditions 
\begin{equation}
 D = 
 \begin{pmatrix}
  -1 & 1 & 0 & \dots & \dots & \dots & 0 \\
  1 & -2 & 1 & 0 & \dots & \dots & 0 \\
  0 & 1 & -2 & 1 & 0 & \dots & 0 \\
  \vdots & \ddots & \ddots & \ddots & \ddots & \ddots & \vdots \\
  0 & \dots & 0 & 1 & -2 & 1 & 0 \\
  0 & \dots & \dots & 0 & 1 & -2 & 1 \\
  0 & \dots & \dots & \dots & 0 & 1 & -1
 \end{pmatrix}\;,
\end{equation} 
and $N$ is the rank $1$ matrix 
\begin{equation}
 N = 
 \begin{pmatrix}
  1 & 0 & \dots & 0 & -1 \\
  0 & 0 & \dots & 0 & 0 \\
  \vdots & & & & \vdots \\
  0 & 0 & \dots & 0 & 0 \\
  -1 & 0 & \dots & 0 & 1 
 \end{pmatrix}
 = \psi\psi^\top\;, 
 \qquad 
 \psi = 
 \begin{pmatrix}
 1 \\ 0 \\ \vdots \\ 0 \\ -1
 \end{pmatrix}\;.
\end{equation} 
The eigenvalues of $-D$ are known to have the form 
\begin{equation}
 \nu_k^0 = 4\sin^2 \biggpar{\frac{\pi k}{2n}}\;, 
 \qquad k = 0, \dots, n-1\;.
\end{equation} 
The corresponding eigenvectors have components 
\begin{equation}
 \ph_k^0(i) 
 = \sqrt{\frac2n} \cos\biggpar{\frac{\pi k (i+\frac12)}{n}}\;, 
 \qquad 
 i = 0, \dots, n-1
\end{equation} 
for $k \neq 0$, while $\ph_0^0$ is a constant vector. One easily checks that 
\begin{equation}
 \pscal{\psi}{\ph_k^0} 
 = 
 \begin{cases}
  0 & \text{if $k$ is even\;,} \\
  \displaystyle 
  2 \sqrt{\frac2n} \cos\biggpar{\frac{\pi k }{2n}}
  & \text{if $k$ is odd\;.}
 \end{cases}
\end{equation} 

The following result is well known in the theory of perturbations by rank $1$ 
linear operators. 

\begin{lemma}
\label{lem:rank_one_ev} 
Denote the eigenvalues of $-M$ by $\nu_k$, $k=0, \dots, n-1$. Then for even 
$k$, we have $\nu_k = \nu^0_k$. The odd-numbered eigenvalues of $-M$ satisfy 
the equation 
\begin{equation}
\label{eq:Fnu} 
 F(\nu) := 
 \sum_{k \text{ odd}} \frac{\pscal{\psi}{\ph_k^0}^2}{\nu_k^0 - \nu} = 1\;.
\end{equation}
\end{lemma}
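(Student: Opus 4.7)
The plan is to exploit the rank-one structure of $N = \psi\psi^\top$ and reduce the spectral problem for $-M = -D - \psi\psi^\top$ to the classical secular equation for a rank-one perturbation of a symmetric operator. I will use the orthonormal eigenbasis $\{\ph_k^0\}_{k=0}^{n-1}$ of $-D$ already introduced, together with the explicit formula for $\pscal{\psi}{\ph_k^0}$.

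First I handle the even indices. Since $\pscal{\psi}{\ph_k^0} = 0$ when $k$ is even, the vector $\ph_k^0$ lies in $\ker(\psi\psi^\top) = \psi^\perp$. Therefore $N\ph_k^0 = \psi\pscal{\psi}{\ph_k^0} = 0$, and
\begin{equation*}
 -M\ph_k^0 = -D\ph_k^0 - N\ph_k^0 = \nu_k^0 \ph_k^0\;,
\end{equation*}
which establishes $\nu_k = \nu_k^0$ for each even $k$. This already accounts for $\lceil n/2\rceil$ eigenvalues of $-M$ (counted with multiplicity).

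Next I treat the odd indices by the standard rank-one perturbation argument. If $\nu$ does not lie in the spectrum of $-D$, the eigenvalue equation $(-M-\nu I)v=0$ can be rewritten as $(-D-\nu I)v = \pscal{\psi}{v}\psi$, so that $v = \pscal{\psi}{v}(-D-\nu I)^{-1}\psi$. Taking the inner product with $\psi$ and using the spectral decomposition $(-D-\nu I)^{-1} = \sum_k (\nu_k^0-\nu)^{-1}\ph_k^0(\ph_k^0)^\top$, this yields
\begin{equation*}
 \pscal{\psi}{v} = \pscal{\psi}{v}\sum_{k=0}^{n-1} \frac{\pscal{\psi}{\ph_k^0}^2}{\nu_k^0-\nu}\;.
\end{equation*}
For any eigenvector $v$ not already accounted for by the even-$k$ case we must have $\pscal{\psi}{v}\neq 0$, since $\pscal{\psi}{v}=0$ would force $v=0$ via the previous identity. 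Dividing out and using that the even-$k$ terms vanish gives exactly the secular equation $F(\nu)=1$.

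It remains to check that this procedure recovers all remaining eigenvalues with the correct multiplicity; this is the one point that requires some care. The function $F$ has simple poles at each $\nu_k^0$ with $k$ odd, and is strictly increasing on each interval between consecutive poles (its derivative is a sum of positive terms $\pscal{\psi}{\ph_k^0}^2/(\nu_k^0-\nu)^2$). Between two consecutive odd-index poles $F$ therefore takes the value $1$ exactly once, and there is one further root in the unbounded component of the complement of the poles; this produces precisely as many roots as there are odd $k$, which together with the even-$k$ eigenvalues exhausts the $n$ eigenvalues of $-M$. The main delicate point is thus the interlacing/counting, which follows from the monotonicity of $F$ on each interval and the dichotomy $\pscal{\psi}{v}=0$ versus $\pscal{\psi}{v}\neq 0$ for any eigenvector $v$ of $-M$.
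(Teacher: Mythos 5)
Your proof is correct and follows essentially the same route as the paper: you establish the even-$k$ eigenvalues by noting $\ph_k^0 \in \ker N$, and you derive the secular equation $F(\nu)=1$ by the standard rank-one reduction $v = \pscal{\psi}{v}(-D-\nu I)^{-1}\psi$ followed by pairing with $\psi$, together with the observation that $\pscal{\psi}{v}\neq 0$ for eigenvalues of $-M$ not shared with $-D$. The closing paragraph on interlacing and root-counting goes slightly beyond what the lemma asserts (the paper defers that counting to the proof of the subsequent proposition), but it is correct and only adds rigor.
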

\begin{proof}
If $k$ is even, since $\pscal{\psi}{\ph_k^0} = 0$, $\ph_k^0$ is an 
eigenvector of $-M$ for the same eigenvalue. If $\nu$ is an eigenvalue of 
$-M$ which is not an eigenvalue of $-D$, then the eigenvalue equation can be 
written  
\begin{equation}
\label{eq:ev_ph} 
 \bigpar{-D-\nu} \ph - \pscal{\psi}{\ph}\psi = 0\;,
\end{equation} 
so that 
\begin{equation}
 \ph = \pscal{\psi}{\ph} \bigpar{-D-\nu}^{-1}\psi\;.
\end{equation} 
It follows that 
\begin{equation}
 \pscal{\psi}{\ph} 
 = \pscal{\psi}{\ph} \pscal{\psi}{\bigpar{-D-\nu}^{-1}\psi}\;.
\end{equation}
We must have $\pscal{\psi}{\ph} \neq 0$, since otherwise, \eqref{eq:ev_ph} 
would imply that $\nu$ is an eigenvalue of $-D$. (Note also that $\psi$ lies in 
the subspace orthogonal to the span of the $\ph_k^0$ with even $k$, which is 
invariant under $D$.) Therefore, we obtain 
\begin{align}
1 &= \pscal{\psi}{\bigpar{-D-\nu}^{-1}\psi} \\
&= \sum_{k \text{ odd}} \pscal{\psi}{\ph_k^0} 
\pscal{\ph_k^0}{\bigpar{-D-\nu}^{-1}\psi} \\
&= \sum_{k \text{ odd}} \pscal{\psi}{\ph_k^0} 
\bigpar{\nu_k^0-\nu}^{-1}\pscal{\ph_k^0}{\psi} \\
&= \sum_{k \text{ odd}} \frac{\pscal{\psi}{\ph_k^0}^2}{\nu_k^0-\nu}\;, 
\end{align}
from which the claim follows. 
\end{proof}

\begin{proof}[Proof of Proposition~\ref{thm:saddle}]
The function $F$ has poles on the eigenvalues $\nu_k^0$ of $-D$ ($k$ odd), and 
is otherwise strictly increasing. Furthermore, it converges to $0$ as 
$\nu\to\pm\infty$. It follows that~\eqref{eq:Fnu} has exactly one solution in 
each interval $(\nu_{2\ell+1}^0, \nu_{2\ell+3}^0)$ (this is called 
interlacing), and an additional solution that is smaller than $\nu_1^0$. 

Note that
\begin{align}
 \psi^\top N \psi &= 2\psi^\top\psi = 4\;, \\
 \psi^\top D \psi &= -2\;,
\end{align}
showing that $M$ has at least one strictly positive eigenvalue. By the 
interlacing result, there is exactly one such eigenvalue, showing that the 
smallest eigenvalue $\nu_1$ of $-M$ is strictly negative. The state $u^{(r)}$ 
is thus indeed a saddle of index $1$. To summarize, the 
odd-numbered eigenvalues satisfy
\begin{equation}
 \nu_1 < 0 = \nu_0^0 < \nu_1^0 < \nu_3 < \nu_3^0 < \nu_5 < \nu_5^0 < \dots\;.
\end{equation} 
This shows in particular that the $u^{(r)}$ are indeed $1$-saddles.
\end{proof}

One can check that the condition $-\frac n4 + \frac12 < r < \frac n4 - \frac12$ implies that the 
number $N_1$ of $1$-saddles $u^{(r)}$, having a jump between $i=n-1$ and $i=0$,
is given by 
\begin{equation}
 N_1 = N_0 - 1\;,
\end{equation} 
where $N_0$ is the number of stable $q$-twisted states, see~\eqref{eq:N0}.

As before, we also examine the effect of the symmetries ${C_j}$ and $I$. 

\begin{lemma}
If $n\neq4$, then the representatives of $u^{(r)}$, $C_1u^{(r)}$, \dots, 
$C_{n-1}u^{(r)}$ in the fundamental domain are all different. Therefore, 
there are exactly $n$ $1$-saddles for each admissible $r$ in the fundamental 
domain.
\end{lemma}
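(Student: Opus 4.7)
The plan is to identify an invariant of the $\cD$-equivalence class that distinguishes the cyclic translates of $u^{(r)}$. The natural candidate is the \emph{difference sequence} $a(u) = (a_0,\dots,a_{n-1}) \in [0,1)^n$ with $a_i := (u_{i+1}-u_i) \bmod 1$ under cyclic indexing. Both integer translations $T_k$ (which add integers to each component) and global phase shifts $S_\varphi$ (which add a common constant) leave every $a_i$ unchanged, so whenever $u$ and $u'$ have the same representative in $\cD$ one has $a(u) = a(u')$. Only this implication will be needed.

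Next I would compute $a(u^{(r)})$ explicitly from \eqref{eq:qhat}. One obtains $a_i = a$ for $i = 0, \dots, n-2$ and $a_{n-1} = \hat a$, where $a$ and $\hat a$ are the two constants introduced in Section~\ref{ssec:classification}. The key point is that $a \neq \hat a$: by construction $\cos(2\pi a) > 0$ and $\cos(2\pi \hat a) < 0$, so they cannot coincide. The cyclic permutation $C_p$ acts on the difference sequence simply by a cyclic shift, $a(C_p u^{(r)})_i = a_{i+p}$, because $(C_p u)_i = u_{i+p}$. In particular the unique coordinate in $a(C_p u^{(r)})$ carrying the value $\hat a$ sits at position $n-1-p \pmod{n}$.

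The conclusion is then immediate: if $C_p u^{(r)}$ and $C_{p'} u^{(r)}$ projected to the same point of $\cD$, their difference sequences would coincide, and matching the unique location of the entry $\hat a$ would force $p \equiv p' \pmod{n}$. Hence the $n$ translates $u^{(r)}, C_1 u^{(r)}, \dots, C_{n-1} u^{(r)}$ represent $n$ pairwise distinct points of $\cD$, and combining with the classification of $1$-saddles whose $\sigma$-pattern has a single $-1$ entry (Section~\ref{ssec:saddles}) yields exactly $n$ such saddles for every admissible $r$. The one substantive step is the sign argument $\cos(2\pi a) > 0 > \cos(2\pi \hat a)$; everything else is symmetry bookkeeping. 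The exclusion $n = 4$ is harmless, since for $n=4$ the admissibility range $-n/4 + 1/2 < r < n/4 - 1/2$ contains no half-integer and the statement is vacuous.
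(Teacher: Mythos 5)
Your proof is correct and takes a genuinely different route from the paper's. The paper works concretely in the $y$-coordinates adapted to the fundamental domain: it computes $C_p y^{(r)}$ for each $p$, observes that $C_p y^{(r)} - y^{(r)}$ is the vector with $n-p$ leading entries equal to $\hat q$ and the remaining $p-1$ entries equal to $0$, and concludes that the translates are distinct in $\cD$ unless $\hat q\in\Z$, which it then shows happens only for $n=4$. You instead use the modular difference sequence $a(u)$ as an invariant of the $(T_k,S_\ph)$-equivalence class and track the unique position of the entry $\hat a$ under cyclic shifts, bypassing the $y$-coordinate computation altogether. The two arguments are dual: a one-line computation (from $u^{(r)}_i=\hat q i/n$) shows that $a=\hat a$ holds precisely when $\hat q\in\Z$, so your failure mode is identical to the paper's. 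Your variant is arguably more transparent and makes visible that the conclusion is independent of the particular choice of fundamental domain. One detail worth stating explicitly: the sign argument $\cos(2\pi a)>0>\cos(2\pi\hat a)$ requires $a\notin\set{\tfrac14,\tfrac34}$, and this is guaranteed because the strict admissibility bound $\abs{r}<\tfrac n4-\tfrac12$ forces $\abs{a}<\tfrac14$ strictly; closing that loop makes the $n=4$ exclusion and the vacuity observation sit together cleanly.
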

\begin{proof}
A representative of $u^{(r)}$ in the hyperplane $\Sigma$ is given by 
\begin{equation}
u^{(r)} = 
 \frac1{2n} \bigpar{-(n-1)\hat q, -(n-3)\hat q, \dots, (n-3)\hat q, (n-1)\hat q}^\top\;.
\end{equation} 
Note that this is invariant under the inversion $I$. 
The sum of the first $n-1$ components being $-\frac{(n-1)\hat q}{2n}$,
the corresponding $y$-coordinates are 
\begin{equation}
 y^{(r)} = \frac1n \bigpar{-(n-1)\hat q, -(n-2)\hat q, \dots, -\hat q}^\top\;.
\end{equation} 
Applying the cyclic permutation, we find 
\begin{equation}
C_1 u^{(r)} = 
 \frac1{2n} \bigpar{-(n-3)\hat q, \dots, (n-3)\hat q, (n-1)\hat q, -(n-1)\hat q}^\top\;.
\end{equation} 
The sum of the first $n-1$ components is now $-\frac{(n-1)\hat q}{2n}$, 
and the corresponding $y$-coordinates are 
\begin{equation}
 C_1y^{(r)} = \frac1n \bigpar{\hat q, 2\hat q, \dots, (n-1)\hat q}^\top\;.
\end{equation} 
In a similar manner, we find 
\begin{align}
C_2y^{(r)} 
&= \frac1n \bigpar{\hat q, 2\hat q, \dots, (n-2)\hat q, -\hat q}^\top\;, \\
C_3y^{(r)} 
&= \frac1n \bigpar{\hat q, 2\hat q, \dots, (n-3)\hat q, -2\hat q, -\hat q}^\top\;, \\
&\dots  \\
C_{n-1}y^{(r)} 
&= \frac1n \bigpar{\hat q, -(n-2)\hat q, \dots, -\hat q}^\top\;.
\end{align}
We observe that 
\begin{equation}
{C_j}y^{(r)} 
= y^{(r)} + \bigpar{\underbrace{\hat q, \dots, \hat q}_{{n-j}\text{ times}}, 
\underbrace{0, \dots, 0}_{{j-1}\text{ times}}}^\top\;.
\end{equation} 
We conclude that all $n$ saddles obtained by cyclic permutation of $u^{(r)}$ 
have different representatives in the fundamental domain, unless $\hat q$ is an 
integer. Since 
\begin{equation}
 \hat q = r \biggpar{1 + \frac{2}{n-2}}\;,
\end{equation} 
this is the case if and only if $\frac{2}{n-2}$ is an odd integer, which is the case
if and only if $n=4$. 
\end{proof}

\begin{remark}
As remarked before, the case $n=4$ is degenerate. We already know that the Hessian matrix 
at the $\pm1$-twisted states is identically zero in this case. The state 
$u^{(1/2)}$ is actually identical with $u^{(1)}$, because in the notations of 
Section~\ref{ssec:classification}, it corresponds to $a = \hat a = \frac14$. 
It may be the case that there are additional constants of motion in this 
situation. 
\end{remark}

{All results given in what follows are restricted to the case $n\geqs5$. Note that the particular 
case $n=3$ has been discussed in Example~\ref{ex:n3} above. The case $n=4$ would require a more 
detailed normal form analysis around critical points.}

\begin{table}
\begin{center}
 \begin{tabular}{|c|c|c|}
\hline
\vrule height 12pt depth 0pt width 0pt
Equilibrium & $(u_0,u_1)$ & $(y_0,y_1)$ \\
\hline 
\vrule height 13pt depth 0pt width 0pt
$u^{(0)}$ & $(0,0)$ & $(0,0)$ \\
\vrule height 13pt depth 0pt width 0pt
$u^{(1)}$ & $(\frac13,-\frac13)$ & $(\frac13,-\frac13)$ \\
\vrule height 13pt depth 0pt width 0pt
$u^{(-1)}$ & $(-\frac13,\frac13)$ & $(-\frac13,\frac13)$ \\
\vrule height 13pt depth 0pt width 0pt
$u^{(1/2)}$ & $(\frac16,-\frac13)$ & $(0,-\frac12)$ \\
\vrule height 13pt depth 0pt width 0pt
$C_1 u^{(1/2)}$ & $(-\frac13,\frac16)$ & $(-\frac12,0)$ \\
\vrule height 13pt depth 8pt width 0pt
$C_2 u^{(1/2)}$ & $(-\frac16,-\frac16)$ & $(-\frac12,-\frac12)$ \\
\hline
\end{tabular}
\end{center}
\caption[]{List of all equilibria in the fundamental domain $\cD$ in the case $n=3$, 
with their $u$ and $y$-coordinates.}
\label{table:n3} 
\end{table}

\begin{figure}[!t]
\begin{center}
\scalebox{0.75}{
\begin{tikzpicture}[>=stealth',main node/.style={circle,inner sep=0.05cm,fill=white,draw},x=7cm,y=7cm]


\draw[->,thick] (-0.6,0) -> (0.6,0);
\draw[->,thick] (0,-0.6) -> (0,0.65);

\draw[semithick] (-0.01,1/2) -- (0.01,1/2);
\draw[semithick] (1/2,-0.01) -- (1/2,0.01);

\draw[semithick,violet] (1/6,1/6) -- (-1/2,1/2) -- (-1/6,-1/6) -- (1/2,-1/2) -- cycle;
\draw[semithick,violet] (-1/6,1/3) -- (-1/3,1/6);
\draw[semithick,violet] (1/6,-1/3) -- (1/3,-1/6);

\node[main node,blue] at (0,0) {};

\node[main node,red] at (1/3,-1/3) {};
\node[main node,red] at (-1/3,1/3) {};

\node[main node,semithick,violet,fill=white] at (1/6,1/6) {};
\node[main node,semithick,violet,fill=white] at (-1/6,1/3) {};
\node[main node,semithick,violet,fill=white] at (-1/2,1/2) {};
\node[main node,semithick,violet,fill=white] at (-1/3,1/6) {};
\node[main node,semithick,violet,fill=white] at (-1/6,-1/6) {};
\node[main node,semithick,violet,fill=white] at (1/6,-1/3) {};
\node[main node,semithick,violet,fill=white] at (1/2,-1/2) {};
\node[main node,semithick,violet,fill=white] at (1/3,-1/6) {};

\node[blue] at (-0.05,-0.04) {$u^{(0)}$};
\node[red] at ({-1/3 + 0.02},{1/3-0.05}) {$u^{(-1)}$};
\node[red] at ({1/3-0.05},{-1/3 + 0.02}) {$u^{(1)}$};
\node[violet] at ({-1/6+0.05},{1/3+0.05}) {$u^{(1/2)}$};
\node[violet] at ({1/6-0.05},{-1/3-0.05}) {$u^{(1/2)}$};
\node[violet] at ({-1/3-0.09},{1/6-0.05}) {$C_1u^{(1/2)}$};
\node[violet] at ({1/3+0.09},{-1/6+0.05}) {$C_1u^{(1/2)}$};
\node[violet] at ({1/6+0.07},{1/6+0.05}) {$C_2u^{(1/2)}$};
\node[violet] at ({-1/6-0.07},{-1/6-0.05}) {$C_2u^{(1/2)}$};
\node[violet] at ({1/2+0.07},{-1/2-0.05}) {$C_2u^{(1/2)}$};
\node[violet] at ({-1/2-0.03},{1/2+0.05}) {$C_2u^{(1/2)}$};

\node[] at (1/2,0.06) {$\frac12$};
\node[] at (0.04,1/2) {$\frac12$};
\node[] at (0.55,-0.03) {$u_0$};
\node[] at (-0.04,0.6) {$u_1$};
\end{tikzpicture}
}
\scalebox{0.75}{
\begin{tikzpicture}[>=stealth',main node/.style={circle,inner sep=0.05cm,fill=white,draw},x=7cm,y=7cm]


\draw[->,thick] (-0.6,0) -> (0.6,0);
\draw[->,thick] (0,-0.6) -> (0,0.65);

\draw[semithick] (-0.01,1/2) -- (0.01,1/2);
\draw[semithick] (1/2,-0.01) -- (1/2,0.01);

\draw[semithick,violet] (1/2,1/2) -- (-1/2,1/2) -- (-1/2,-1/2) -- (1/2,-1/2) -- cycle;
\draw[semithick,violet] (0,1/2) -- (-1/2,0);
\draw[semithick,violet] (1/2,0) -- (0,-1/2);

\node[main node,blue] at (0,0) {};

\node[main node,red] at (1/3,-1/3) {};
\node[main node,red] at (-1/3,1/3) {};

\node[main node,semithick,violet,fill=white] at (1/2,1/2) {};
\node[main node,semithick,violet,fill=white] at (-1/2,0) {};
\node[main node,semithick,violet,fill=white] at (1/2,0) {};
\node[main node,semithick,violet,fill=white] at (-1/2,-1/2) {};
\node[main node,semithick,violet,fill=white] at (0,-1/2) {};
\node[main node,semithick,violet,fill=white] at (0,1/2) {};
\node[main node,semithick,violet,fill=white] at (1/2,-1/2) {};
\node[main node,semithick,violet,fill=white] at (-1/2,1/2) {};

\node[blue] at (-0.05,-0.04) {$u^{(0)}$};
\node[red] at ({-1/3+0.02},{1/3+0.05}) {$u^{(-1)}$};
\node[red] at ({1/3+0.07},{-1/3-0.02}) {$u^{(1)}$};
\node[violet] at ({+0.09},{1/2+0.05}) {$u^{(1/2)}$};
\node[violet] at ({-0.07},{-1/2-0.05}) {$u^{(1/2)}$};
\node[violet] at ({-1/2-0.11},{-0.05}) {$C_1u^{(1/2)}$};
\node[violet] at ({1/2+0.11},{+0.05}) {$C_1u^{(1/2)}$};
\node[violet] at ({1/2+0.07},{1/2+0.05}) {$C_2u^{(1/2)}$};
\node[violet] at ({-1/2-0.07},{-1/2-0.05}) {$C_2u^{(1/2)}$};
\node[violet] at ({1/2+0.07},{-1/2-0.05}) {$C_2u^{(1/2)}$};
\node[violet] at ({-1/2-0.07},{1/2+0.05}) {$C_2u^{(1/2)}$};

\node[] at (0.55,-0.03) {$y_0$};
\node[] at (-0.04,0.6) {$y_1$};
\end{tikzpicture}
}
\end{center}
\caption{Equilibria in the fundamental domain, in the case $n=3$. 
They are shown in $u$ and $y$-coordinates, with 
sinks in \textcolor{blue}{blue}, 
local maxima in \textcolor{red}{red}, 
and $1$-saddles in \textcolor{violet}{violet}.
Points on opposite sides and corners of the fundamental domain are identified. 
The \textcolor{violet}{violet} lines indicate the 
stable/unstable manifolds of the $1$-saddles (it follows from~\eqref{eq:U_n3y} that the 
potential is constant on these lines), and part of them are domain boundaries.}
\label{fig:fundamental_domain}
\end{figure}
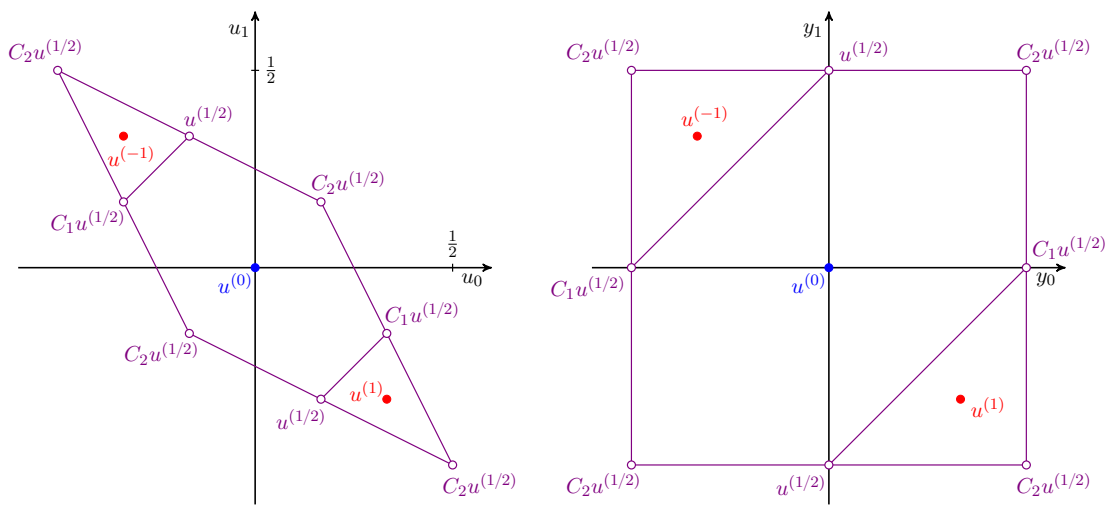

\begin{example}
Returning to the case $n=3$, the list of all equilibria is given in Table~\ref{table:n3}.
They are also shown in Figure~\ref{fig:fundamental_domain}, to be compared 
with the contour plot of the potential in Figure~\ref{fig:potential_3d}. 
Note that the $0$-twisted state $u^{(0)}$ communicates with six copies 
of itself, via six $1$-saddles, which are given by two copies each for 
the three states $u^{(1/2)}$, $C_1 u^{(1/2)}$ and $C_2 u^{(1/2)}$.

In this situation, if the dynamics is described in the fundamental domain, metastable transitions correspond to the system leaving a neighborhood of $u^{(0)}$, and returning to it after passing 
near one of the $1$-saddles. By contrast, if the system is viewed as evolving in the whole 
hyperplane $\Sigma$, the different copies of $u^{(0)}$ are no longer considered to be all 
the same state, and the dynamics resembles a random walk between these copies (see also 
Remark~\ref{rem:Sigma}). 
\end{example}


\subsection{Other equilibria}

In order to determine the metastable hierarchy, it will be useful to know exactly 
how many equilibria have Morse index $0$ or $1$. Recall that we have introduced 
the integer 
\begin{equation}
 p = \#\set{i\in\Lambda \colon a_i = a}
 = \#\set{i\in\Lambda \colon \sigma_i = 1} 
 \in\set{1,\dots,n}\;.
\end{equation} 
The case $p = n$ corresponds to $q$-twisted states, which have either Morse index $0$ (sinks), 
or Morse index $n$ (local maxima of the potential), or are degenerate, in the special case 
$\abs{q} = \frac n4$. The case $p = n-1$ corresponds to $1$-saddles. 

It thus remains to consider the cases where $1 \leqs p \leqs n-2$. The following result 
shows that these equilibria cannot have Morse index $0$ or $1$, and therefore are not 
important for the metastable dynamics of the system. 

\begin{lemma}
Assume $n\geqs 5$. Then any equilibrium point with $1 \leqs p \leqs n-2$, that is, with at least two $\sigma_i$ equal to $-1$, is a saddle of Morse index $2$ at least. 
\end{lemma}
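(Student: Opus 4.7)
The plan is to analyze the quadratic form $v \mapsto v^\top M(\sigma) v$ by a change of variable that reduces the question to a diagonal form on a hyperplane. A direct expansion of the entries of $M(\sigma)$ gives
\begin{equation}
v^\top M(\sigma) v = \sum_{i \in \Lambda} \sigma_i (v_{i+1} - v_i)^2 .
\end{equation}
Introducing $d_i = v_{i+1} - v_i$, the linear map $v \mapsto d$ has kernel equal to the constant vectors (the familiar zero mode from translation invariance) and image the hyperplane $H = \{d \in \R^\Lambda : \sum_i d_i = 0\}$. Consequently, the Morse index of $M(\sigma)$ agrees with the Morse index of the diagonal quadratic form $q(d) = \sum_i \sigma_i d_i^2$ restricted to $H$. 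Under the convention $\cos(2\pi a) > 0$ from Section~\ref{ssec:classification}, the Hessian at the equilibrium has the same Morse index as $M(\sigma)$, so it suffices to exhibit a two-dimensional subspace of $H$ on which $q$ is negative definite.

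Let $I_- = \{i \in \Lambda : \sigma_i = -1\}$ and $I_+ = \Lambda \setminus I_-$, so that $|I_-| = n - p \geq 2$ by hypothesis. In the easy case $|I_-| \geq 3$, I would take $W = V_- \cap H$ with $V_- = \mathrm{span}\{e_i : i \in I_-\}$: since no standard basis vector lies in $H$, we have $V_- \not\subset H$, hence $\dim W = |I_-| - 1 \geq 2$, and $q$ is manifestly negative definite on $V_-$, hence on $W$.

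The main obstacle is the boundary case $|I_-| = 2$, say $I_- = \{i_1, i_2\}$, where $V_- \cap H$ is only one-dimensional and one must work slightly harder. I would build the plane explicitly by setting
\begin{align}
 w &= e_{i_1} - e_{i_2} , \\
 u &= e_{i_1} + e_{i_2} - \tfrac{2}{n-2} \sum_{k \in I_+} e_k .
\end{align}
Both vectors lie in $H$, and a short computation gives $q(w) = -2$ and $q(u) = -2 + \tfrac{4}{n-2}$, which is strictly negative for $n \geq 5$. For the polar form $B(x,y) = \sum_k \sigma_k x_k y_k$, note that $B(w,u)$ is supported on $I_-$ since $w$ vanishes on $I_+$; on $I_-$, the vector $w$ is antisymmetric and $u$ symmetric under the swap $i_1 \leftrightarrow i_2$, while $\sigma$ is constant, so $B(w,u) = -B(w,u) = 0$. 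Therefore $q(\alpha w + \beta u) = \alpha^2 q(w) + \beta^2 q(u) < 0$ for $(\alpha,\beta) \neq (0,0)$, making $q$ negative definite on $\mathrm{span}(w,u)$. In either case, $q|_H$, hence the Hessian of $U$, has Morse index at least $2$, as claimed.
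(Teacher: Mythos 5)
Your proof is correct, and it takes a genuinely different route from the paper's. You exploit the identity
\begin{equation}
v^\top M(\sigma)\,v \;=\; \sum_{i\in\Lambda} \sigma_i\,(v_{i+1}-v_i)^2
\end{equation}
to pass to difference coordinates $d_i = v_{i+1}-v_i$, after which the Morse index of $M(\sigma)$ becomes the Morse index of the diagonal form $q(d)=\sum_i \sigma_i d_i^2$ restricted to the hyperplane $H=\{\sum_i d_i = 0\}$. The required $2$-dimensional negative subspace then appears almost for free when $|I_-|\geqs 3$ (as $\vspan\{e_i : i\in I_-\}\cap H$), and the boundary case $|I_-|=2$ is settled by the explicit orthogonal pair $w, u$. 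The paper instead works directly with $M(\sigma)$ in the original coordinates: it picks pairs of test vectors $\hat e_1, \hat e_2$ adapted to the position of the sign changes in $\sigma$, splits into two sub-cases according to whether the two closest $-1$'s are adjacent or not, and checks negative definiteness of the resulting $2\times 2$ matrix by hand. Your diagonalization makes the combinatorial content of the problem transparent (the index is essentially $|I_-|$ adjusted for a hyperplane constraint) and avoids the case split on the arrangement of the $\sigma_i$, at the cost of one nonobvious algebraic identity and the verification that the index is preserved under the quotient by $\ker T$ — both of which you handle correctly. One small point of care, which your write-up handles implicitly but is worth making explicit: the convention $\cos(2\pi a)>0$ is available precisely because in case~2 of Section~\ref{ssec:classification} the values $a$ and $\hat a=\tfrac12-a$ are distinct, which rules out $a\in\{\tfrac14,\tfrac34\}$ and hence excludes the degenerate situation where the Hessian vanishes identically.
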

\begin{proof}
Since $p \leqs n-2$, we may assume that $\sigma_0 = 1$. 
Let $(e_0,\dots,e_{n-1})$ be the canonical basis of $\R^\Lambda$. 
Our aim is to construct vectors $\hat e_1$ and $\hat e_2$, orthogonal to each other, and such that the restriction of $M(\sigma)$ to the subspace spanned by these vectors is negative definite. 

Let $i$ be the smallest $i\geqs1$ such that $\sigma_i = -1$. 
We first consider the case where $\sigma_{i+1} = -1$. 
Then the restriction of 
$M(\sigma)$ to $\vspan(e_{i-1},e_i,e_{i+1},e_{i+2})$ is given by
\begin{equation}
\widehat{M} = 
\begin{pmatrix}
\sigma_{i-2} - 1 & -1 & 0 & 0 \\
-1 & 0 & 1 & 0 \\
0 & 1 & -2 & 1 \\
0 & 0 & 1 & \sigma_{i+2} - 1
\end{pmatrix}\;.
\end{equation} 
Consider the vectors 
\begin{equation}
 \hat e_1 = e_{i-1} + 2e_i\;, \qquad 
 \hat e_2 = -e_{i+1} + e_{i+2}\;.
\end{equation} 
Computing $\pscal{\hat e_p}{\widehat{M}\hat e_q}$ for $p, q\in\set{1,2}$, one obtains 
that the restriction of $M(\sigma)$ to $\vspan(\hat e_1, \hat e_2)$ is given by
\begin{equation}
 \begin{pmatrix}
  \sigma_{i-2} - 3 & -2 \\
  -2 & \sigma_{i+2} - 5
 \end{pmatrix}\;.
\end{equation} 
One easily checks that this matrix is negative definite, for any values of 
$\sigma_{i-2}, \sigma_{i+2} \in\set{-1, 1}$. 

Consider now the case $\sigma_{i+1} = 1$. Let $j \geqs i+2$ be the smallest $j$ such 
that $\sigma_{j-1} = 1$ and $\sigma_j = -1$. Then the restriction of 
$M(\sigma)$ to $\vspan(e_i,e_{i+1},e_j,e_{j+1})$ is given by
\begin{equation}
\widehat{M} = 
\begin{pmatrix}
0 & 1 & 0 & 0 \\
1 & 0 & M_{i+1,j} & 0 \\
0 & M_{i+1,j} & 0 & 1 \\
0 & 0 & 1 & \sigma_{j+1} - 1
\end{pmatrix}\;,
\end{equation} 
where $M_{i+1,j} = -1$ if $j = i+2$, and $0$ otherwise. 
Consider the vectors 
\begin{equation}
 \hat e_1 = e_i - e_{i+1}\;, \qquad 
 \hat e_2 = e_j + e_{j+1}\;.
\end{equation} 
The restriction of $M(\sigma)$ to $\vspan(\hat e_1, \hat e_2)$ is given by
\begin{equation}
 \begin{pmatrix}
  -2 & -M_{i+1,j} \\
  -M_{i+1,j} & \sigma_{j+1} - 3
 \end{pmatrix}\;.
\end{equation} 
One again easily checks that this matrix is negative definite, for any values of 
$\sigma_{j+1}\in\set{-1, 1}$ and $M_{i+1,j} \in\set{-1,0}$. 
\end{proof}


\subsection{Relevant saddles and metastable hierarchy}

In this subsection, we identify the relevant saddles between the $q$-twisted states, 
as defined in Section~\ref{sec:stoch}.  

\begin{figure}[!t]
\begin{center}
\scalebox{1.0}{
\begin{tikzpicture}[>=stealth',main node/.style={circle,minimum
size=0.25cm,fill=blue!20,draw},x=1.5cm,y=0.5cm, 
declare function={U(\n,\s) = -(\n-1)*cos(360*\s/\n) - cos(360*\s*(1-1/\n));
sq(\q,\r) = (\q+0.5)*\r;
}]

\newcommand*{\nn}{18}
\newcommand*{\mm}{4}
\newcommand*{\rr}{\nn/(\nn-2)}


\draw[->,thick] ({-\mm - 0.5},0) -> (\mm + 0.5,0);
\draw[->,thick] (0,-\nn-1) -> (0,2);

\foreach \x in {-\mm,...,\mm}
\draw[semithick] ({\x},-0.2) -- ({\x},0.2);

\foreach \x in {1,...,\mm}
\node[] at (\x,0.75) {{\small $\x$}};

\foreach \x in {1,...,\mm}
\node[] at (-\x-0.1,0.75) {{\small $-\x$}};

\foreach \y in {-\nn,...,-1}
\draw[semithick] (-0.07,{\y}) -- (0.07,{\y});


\draw[blue,very thick,-,smooth,domain={-\mm}:{\mm},samples=100,/pgf/fpu,
/pgf/fpu/output format=fixed] plot (\x, {U(\nn,\x)});


\newcommand*{\sqa}{0.5*\rr}
\newcommand*{\sqb}{1.5*\rr}
\newcommand*{\sqc}{2.5*\rr}
\newcommand*{\sqd}{3.5*\rr}

\node[] at ({\sqa}, {U(\nn,\sqa) + 0.6}) {$u^{(1/2)}$};
\node[] at ({\sqb}, {U(\nn,\sqb) + 0.6}) {$u^{(3/2)}$};
\node[] at ({\sqc}, {U(\nn,\sqc) + 0.6}) {$u^{(5/2)}$};
\node[] at ({\sqd}, {U(\nn,\sqd) + 0.6}) {$u^{(7/2)}$};

\node[] at ({0.1-\sqa}, {U(\nn,\sqa) + 0.6}) {$u^{(-1/2)}$};
\node[] at ({0.2-\sqb}, {U(\nn,\sqb) + 0.6}) {$u^{(-3/2)}$};
\node[] at ({0.3-\sqc}, {U(\nn,\sqc) + 0.6}) {$ u^{(-5/2)}$};
\node[] at ({0.3-\sqd}, {U(\nn,\sqd) + 0.6}) {$u^{(-7/2)}$};

\foreach \x in {1,...,\mm} \node[] at ({\x + 0.2},{U(\nn,\x) - 0.6}) {$u^{(\x)}$};
\foreach \x in {1,...,\mm} \node[] at ({-\x - 0.1},{U(\nn,\x) - 0.6}) {$u^{(-\x)}$};

\node[] at (0.3, {U(\nn,0) - 0.6}) {$u^{(0)}$};

\node[] at (\mm + 0.3,0.5) {$s$};
\end{tikzpicture}
}
\end{center}
\caption{The potential $U(u(s))$ for $n = 18$ and $K = 2\pi$. The local minima are 
located at the $q$-twisted states $u(s) = u^{(q)}$, while the local maxima are 
on the $1$-saddles $u(\hat s_q) = u^{(q+1/2)}$.}
\label{f.potential}
\end{figure}
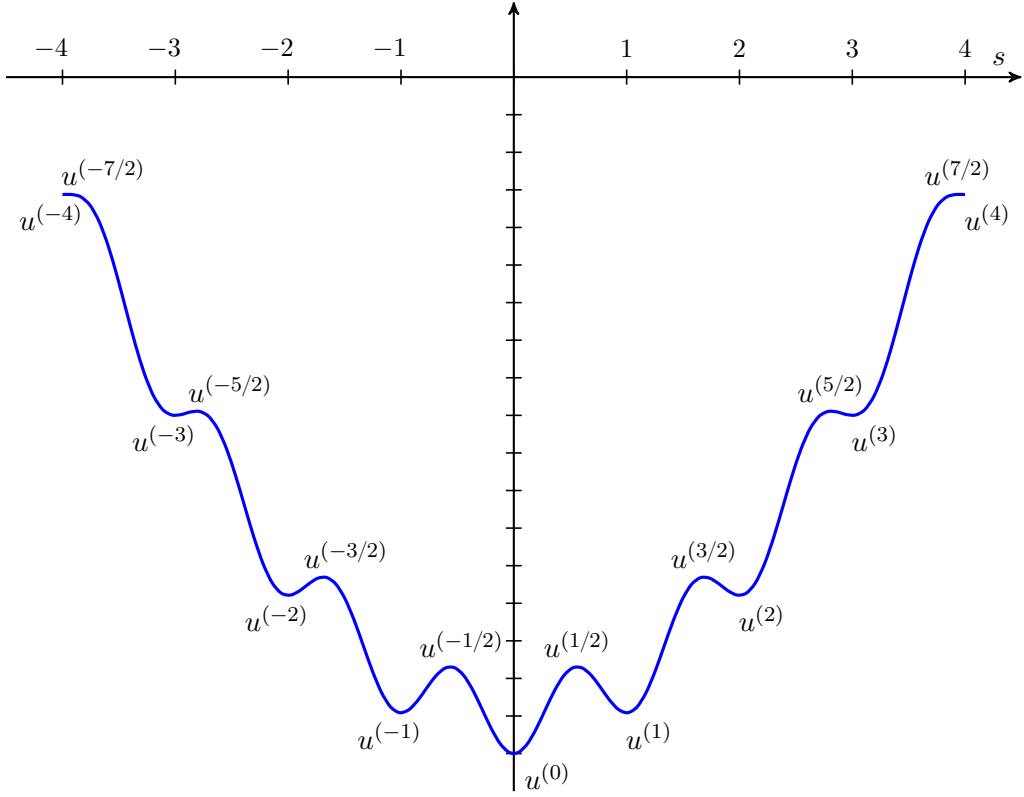

\begin{proposition}
For any $q$ such that $1 \leqs \abs{q} < \frac n4$, 
the set of relevant saddles between the sink $u^{(q)}$ and the set 
$\set{u^{(-\abs{q}+1)}, u^{(-\abs{q}+2)},\dots, u^{(\abs{q}-1)}}$ is 
\begin{equation}
 \bigset{{C_j}u^{(\abs{q}-1/2)}, {C_j} u^{(-\abs{q}+1/2)} \colon 0 \leqs p \leqs n-1}\;,
\end{equation} 
where we recall that ${C_j}$ denotes the cyclic permutation defined in~\eqref{eq:Cp}. 
\end{proposition}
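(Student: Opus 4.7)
The plan is to characterize the relevant saddles by (i) computing the communication height $\overline U(u^{(q)}, \mathcal{M})$, (ii) enumerating all $1$-saddles attaining this height, and (iii) verifying the cut property. By the inversion symmetry $I: u\mapsto -u$, which fixes $\mathcal M$ and sends $u^{(q)}$ to $u^{(-q)}$, it suffices to treat $q>0$, and write $\mathcal M = \set{u^{(-q+1)},\dots,u^{(q-1)}}$.

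\textbf{Step 1 (communication height).} I would show that
\begin{equation*}
\overline U\bigpar{u^{(q)}, \mathcal{M}} = V^* := U\bigpar{u^{(q-1/2)}} = -\frac{K(n-2)}{2\pi}\cos\Bigpar{\frac{(2q-1)\pi}{n-2}}.
\end{equation*}
For the upper bound, I would concatenate arcs following the unstable manifolds of the $1$-saddles $u^{(q-1/2)}, u^{(q-3/2)}, \dots, u^{(-q+3/2)}$ into a piecewise path $u^{(q)}\to u^{(q-1)}\to\dots\to u^{(-q+1)}\in\mathcal{M}$. By~\eqref{eq:potential_saddle}, $U(u^{(r)})$ is strictly decreasing in $\abs{r}$ on $[0, n/4-1/2)$, so the peak height along this chain is attained at $u^{(q-1/2)}$, giving $\overline U\leq V^*$. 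For the matching lower bound, I would analyze the connected components of the sublevel set $\bigset{U<V^*}$ in the fundamental domain $\cD$: by the complete classification in Section~\ref{sec:equilibria} together with the potential formulas~\eqref{eq:potential_sink} and~\eqref{eq:potential_saddle}, the only critical points strictly below $V^*$ are the sinks $u^{(q')}$ with $\abs{q'}\leq q$ and the $1$-saddles $C_pu^{(r)}$ with $\abs{r}<q-1/2$. These saddles link consecutive sinks in the chain $u^{(-q+1)},\dots,u^{(q-1)}$, while $u^{(q)}$ and $u^{(-q)}$ sit in their own isolated components. Hence $u^{(q)}$ and $\mathcal{M}$ lie in different components of $\bigset{U<V^*}$, and any continuous path between them must attain height at least $V^*$.

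\textbf{Step 2 (enumerating $1$-saddles at $V^*$).} Formula~\eqref{eq:potential_saddle} makes $U(u^{(r)})$ an even, $(n-2)$-periodic function of $r$. In the admissible half-integer range $-\frac n4+\frac12<r<\frac n4-\frac12$ the equation $U(u^{(r)})=V^*$ has exactly the two solutions $r=\pm(q-1/2)$. Combined with the result from Section~\ref{ssec:saddles} that, for $n\neq 4$, the $n$ cyclic translates $C_p u^{(r)}$ are pairwise distinct in $\cD$, this yields the $2n$ saddles listed in the proposition, and no others.

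\textbf{Step 3 (cut property).} A minimal path $\gamma$ from $u^{(q)}$ to $\mathcal{M}$ must exit the basin of $u^{(q)}$. The $1$-saddles on its boundary are precisely $C_p u^{(q\pm 1/2)}$, and since $U(u^{(q+1/2)})>V^*$, any minimal path has to cross through some $C_p u^{(q-1/2)}$, which is in our set. This already shows that $\bigset{C_p u^{(q-1/2)}}$ is a valid set of relevant saddles. Each $C_p u^{(-q+1/2)}$ is included because it is itself on some minimal path: concatenating a direct minimum-height path $u^{(q)}\to u^{(-q+1)}\in\mathcal{M}$ with an out-and-back excursion through $C_p u^{(-q+1/2)}$ into the basin of $u^{(-q)}$ and back to $u^{(-q+1)}$ yields a valid path of maximum height exactly $V^*$ passing through the chosen saddle.

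\textbf{Main obstacle.} The hard part is the lower bound in Step~1: ruling out topologically non-obvious low-potential routes through the high-dimensional landscape that might connect $u^{(q)}$ to $\mathcal{M}$ below $V^*$. The reduction to the essentially one-dimensional chain of $q$-twisted states relies entirely on the complete classification of equilibria from Section~\ref{sec:equilibria}, which constrains which $1$-saddles can lie on the boundary of each basin of attraction (all other critical points having Morse index $\geq 2$ and hence stable manifolds too thin to separate basins).
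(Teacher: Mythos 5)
Your proposal takes a genuinely different route from the paper: you compute the communication height directly via a sublevel-set argument, whereas the paper runs an explicit induction on $q$ using the open-valley/closed-valley machinery, with the decomposition $\cV^{(q-1)}=\cO\cV(u^{(q-1/2)})\cup\cO\cV(u^{(-q+1/2)})$. Both are Morse-theoretic in spirit, and your Step~2 and Step~3 are sound, but there are two concrete gaps in your lower bound.

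First, for the upper bound you invoke arcs \lq\lq following the unstable manifolds\rq\rq\ of the saddles; the paper deliberately avoids this by constructing an explicit test path, the one-parameter curve $u_i(s)=si/n$, and showing by a sum-to-product identity that $U(u(s))$ has local maxima exactly at the $\hat s_q$. The paper itself flags in Section~\ref{ssec:EK_general} that it has \emph{not} proved that the unstable manifold of $u^{(q+1/2)}$ connects to $u^{(q)}$; so an argument resting on that heteroclinic structure has a genuine hole, whereas the explicit curve gives the same upper bound unconditionally.

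Second, and more seriously, the crux of your lower bound, that $u^{(q)}$ sits in its own connected component of $\{U<V^*\}$, is asserted rather than proved. You write that \lq\lq these saddles link consecutive sinks\ldots while $u^{(q)}$ and $u^{(-q)}$ sit in their own isolated components,\rq\rq\ but what needs to be ruled out is precisely a low path from $u^{(q)}$ into the chain, possibly mediated by a cyclically permuted $1$-saddle $C_pu^{(r)}$ with $\abs{r}<q-\tfrac12$. Knowing which $1$-saddles lie on the boundary of the basin of $u^{(q)}$ is exactly the heteroclinic information that is not available. The paper's inductive argument is the mechanism that replaces this: the induction hypothesis controls which sinks and $1$-saddles sit inside $\cV^{(q-1)}$, and the contradiction step (\lq\lq $u^{(q+1)}$ would belong to the open valley of $u^{(r)}$, which lies in $\cV^{(q-1)}$\rq\rq ) is what forbids the non-obvious low routes. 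Your \lq\lq Main obstacle\rq\rq\ paragraph correctly identifies where the difficulty lies, but the appeal to \lq\lq stable manifolds too thin to separate basins\rq\rq\ is not a substitute for that inductive bookkeeping. A minor additional inaccuracy: your claim that the \emph{only} critical points strictly below $V^*$ are the listed sinks and $1$-saddles is stronger than what Section~\ref{sec:equilibria} provides (the paper bounds Morse indices of other equilibria but does not locate their potential values), although this does not affect the conclusion since higher-index critical points cannot merge components.
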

\begin{proof}
We want to show that any minimal path from $u^{(q)}$ to $\set{u^{(-\abs{q}+1)}, u^{(-\abs{q}+2)}, 
\dots, u^{(\abs{q}-1)}}$, as defined in Section~\ref{sec:stoch} above, contains at least 
one of the points of the given list as its hightest point. 
Note that~\eqref{eq:potential_saddle} implies that the value of $U$ is the same at 
all points in the list. 

Let $m = \max\set{q\in\N_0\colon q < \frac n4}$, so that the sinks are parametrized by 
$q \in\set{-m,\dots,m}$. We define a curve $u:[-m,m] \to (\R/\Z)^\Lambda$ by 
\begin{equation}
 u_i(s) = \frac{s}{n} i\;, 
 \qquad s\in[-m,m]\;, 
 \qquad i\in\Lambda\;.
\end{equation} 
Note that for any $q\in[-m,m]$, we have 
\begin{equation}
 u(s) = u^{(q)} \quad \Leftrightarrow \quad s = q\;,
\end{equation} 
while 
\begin{equation}
 u(s) = u^{(q+\frac12)} 
 \quad \Leftrightarrow \quad 
 s = \hat s_q := \biggpar{q + \frac12} \frac{n}{n-2}\;. 
\end{equation} 
In other terms, the curve $\set{u(s)}_{s\in[-m,m]}$ visits the equilibria 
$u^{(-m)}$, $u^{(-m+1/2)}$, $u^{(-m+1)}, \dots$ up to $u^{(m)}$ in this order.
By~\eqref{eq:pot_nn}, the potential along this curve is given by 
\begin{equation}
 U(u(s)) = -\frac{K}{2\pi}
 \biggbrak{(n-1)\cos\biggpar{2\pi\frac{s}{n}} + \cos\biggpar{2\pi \frac{n-1}{n}}s}\;.
\end{equation} 
The derivative of this function is 
\begin{align}
 \frac{\6}{\6s} U(u(s))
 &= K \frac{n-1}{n} \biggbrak{\sin\biggpar{2\pi\frac sn}
 + \sin\biggpar{2\pi\frac{n-1}{n}s}} \\
 &= 2K \frac{n-1}{n} \sin(2\pi s)
 \cos\biggpar{2\pi\frac{n-2}{n}s}\;,
\end{align}
where we have used the sum-product formula $\sin\alpha + \sin\beta 
= 2 \sin(\frac{\alpha+\beta}{2})\cos(\frac{\alpha-\beta}{2})$.
It is straightforward to check that this derivative in positive on the intervals $(q,\hat s_q)$,
and negative on the intervals $(\hat s_q, q+1)$.

The proof now follows by induction on $\abs{q}$. By symmetry, we may restrict the discussion to the case $q>0$ and $p=0$. In the base case $q = 1$, the restricted curve $\set{u(1-s)}_{s\in[0,1]}$
provides a path from $u^{(1)}$ to $u^{(0)}$, containing the saddle $u^{(1/2)}$ 
as its highest point. Furthermore, it is known that since all critical points are 
non-degenerate, any minimal path from $u^{(1)}$ to $u^{(0)}$ has to contain a saddle of index $1$. Since $u^{(1/2)}$ and $u^{(-1/2)}$ are the lowest $1$-saddles, the path is minimal, 
and the claim follows for $q = 1$. 

To proceed, we need some topological properties of the potential landscape (see for instance~\cite[Section~2.1]{BG_MPRF10}). Let $\hat u$ be a $1$-saddle, and define its~\emph{closed valley} and~\emph{open valley}, respectively, by  
\begin{align}
 \cC\cV(\hat u) &= \bigset{u \colon \overline V(u,\hat u) \leqs U(\hat u)}\;, \\
 \cO\cV(\hat u) &= \bigset{u\in\cC\cV(\hat u) \colon U(u) < U(\hat u)}\;.
\end{align} 
If the potential is of class $\cC^2$ and $\hat u$ is non-degenerate, then $\cO\cV(\hat u)$ 
has at most two connected components, containing the two parts of the one-dimensional unstable manifold of $\hat u$. Furthermore, $\cC\cV(\hat u)$ is the closure of $\cO\cV(\hat u)$, and is path-connected. Intuitively, if $\cO\cV(\hat u)$ has two connected components, these components 
meet at the saddle point $\hat u$, and only there. 

\begin{figure}[!t]
\begin{center}
\scalebox{0.85}{
\begin{tikzpicture}[>=stealth',main node/.style={circle,blue,inner sep=0.04cm,fill=white,draw},x=1.2cm,y=0.8cm,
declare function={f(\x) = (\x-1)*(\x-5)*(67*\x^4-789*\x^3+2996*\x^2-4044*\x+864)/720;}]

\draw[blue,thick,fill=blue!25] plot[smooth,tension=.9]
  coordinates{(4.9,0) (6.2,-1.1) (6.9,0) (6.2,1.1) (4.9,0)};

\draw[blue,thick,fill=blue!25] plot[smooth,tension=.9]
  coordinates{(-4.9,0) (-6.2,-1.1) (-6.9,0) (-6.2,1.1) (-4.9,0)};
  
\draw[blue,thick,fill=blue!25] plot[smooth,tension=.6]
   coordinates{(4.9,0) (2.5,2.3) (0,3) (-2.5,2.3) (-4.9,0)};
  
\draw[blue,thick,fill=blue!25] plot[smooth,tension=.6]
   coordinates{(4.9,0) (2.5,-2.3) (0,-3) (-2.5,-2.3) (-4.9,0)};

\draw[blue,thick,fill=blue!50] plot[smooth,tension=.9]
  coordinates{(1.4,0) (2.7,-1) (3.4,0) (2.7,1) (1.4,0)};

\draw[blue,thick,fill=blue!50] plot[smooth,tension=.9]
  coordinates{(-1.4,0) (-2.7,-1) (-3.4,0) (-2.7,1) (-1.4,0)};
  
\draw[blue,thick,fill=blue!50] plot[smooth,tension=.9]
  coordinates{(1.4,0) (0,1) (-1.4,0)};

\draw[blue,thick,fill=blue!50] plot[smooth,tension=.9]
  coordinates{(1.4,0) (0,-1) (-1.4,0)};
  
\node[main node,semithick] at (0,0) {};
\node[main node,semithick] at (1.4,0) {};
\node[main node,semithick] at (-1.4,0) {};
\node[main node,semithick] at (2.7,0) {};
\node[main node,semithick] at (-2.7,0) {};

\node[] at (0.3,0.4) {$u^{(0)}$};
\node[] at (2.8,0.4) {$u^{(1)}$};
\node[] at (-2.45,0.4) {$u^{(-1)}$};

\node[] at (1.95,0.1) {$u^{(1/2)}$};
\node[] at (-0.75,0.1) {$u^{(-1/2)}$};

\node[main node,semithick] at (4.9,0) {};
\node[main node,semithick] at (6.3,0) {};
\node[main node,semithick] at (-4.9,0) {};
\node[main node,semithick] at (-6.3,0) {};

\node[] at (6.4,0.4) {$u^{(2)}$};
\node[] at (-6.1,0.4) {$u^{(-2)}$};

\node[] at (5.45,0.1) {$u^{(3/2)}$};
\node[] at (-4.25,0.1) {$u^{(-3/2)}$};

\node[blue] at (0,-1.4) {$\cV^{(0)}_0$};
\node[blue] at (2.5,-1.4) {$\cV^{(0)}_+$};
\node[blue] at (-2.5,-1.4) {$\cV^{(0)}_-$};
\node[blue] at (0,3.4) {$\cV^{(1)}_0$};
\node[blue] at (-6.3,1.5) {$\cV^{(1)}_-$};
\node[blue] at (6.3,1.5) {$\cV^{(1)}_+$};

\end{tikzpicture}
}
\vspace{-4mm}
\end{center}
\caption[]{Construction of the metastable hierarchy.
The set $\cV^{(0)} = \cV^{(0)}_- \cup \cV^{(0)}_0 \cup \cV^{(0)}_+$ is the 
union of the open valleys $\cO\cV(u^{(1/2)})$ and $\cO\cV(u^{(-1/2)})$, which overlap 
in the central component. Similarly, the set $\cV^{(1)}$ is the union of 
the open valleys $\cO\cV(u^{(3/2)})$ and $\cO\cV(u^{(-3/2)})$. 
}
\label{fig:meta_hierarchy2}
\end{figure}
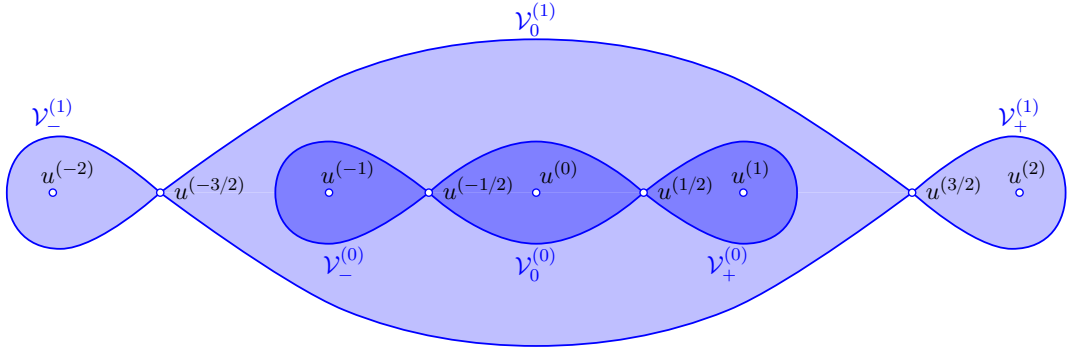

Returning to our particular problem, we set 
\begin{equation}
 \cV^{(0)} = \cO\cV(u^{(1/2)}) \bigcup \cO\cV(u^{(-1/2)})\;.
\end{equation} 
This set has exactly three connected components: the component $\cV^{(0)}_+$ containing $u^{(1)}$, the component $\smash{\cV^{(0)}_-}$ containing $u^{(-1)}$, and the component $\smash{\cV^{(0)}_0}$ containing $u^{(0)}$, which is also the intersection of the open valleys $\cO\cV(u^{(1/2)})$ and $\cO\cV(u^{(-1/2)})$, see Figure~\ref{fig:meta_hierarchy2}. 
We claim that each component contains only one sink. Indeed, if one component contained two different sinks, it would have to contain a $1$-saddle as well. But this is not possible, since all $1$-saddles have height $U(u^{(1/2)})$ at least, while points in the components are lower by construction. 

We now assume by induction that the claim is true for a given $q < \frac n4-1$, and that the set 
\begin{equation}
\label{eq:valley_decomp} 
 \cV^{(q-1)} = \cO\cV(u^{(q-1/2)}) \bigcup \cO\cV(u^{(-q+1/2)})
\end{equation} 
has exactly three connected component. The components containing $u^{(q)}$ and 
$u^{(-q)}$ contain each exactly one sink, while the remaining component $\smash{\cV^{(q-1)}_0}$ contains the sinks $u^{(-q+1)}, \dots u^{(q-1)}$ and the saddles $u^{(-q+3/2)}, \dots, u^{(q-3/2)}$, and no other sinks or $1$-saddles. 

The induction step proceeds as follows. The restriction $\set{u(q+1-s)}_{s\in[0,1]}$
provides a path from $u^{(q+1)}$ to $u^{(q)}$. Now assume by contradiction 
that there is another path from $u^{(q+1)}$ to a sink $u^{(q')}$, 
with $-q\leqs q'\leqs q$, whose highest point is below $U(u^{(q+1/2)})$. 
This highest point must be one of the saddles $u^{(-q+1/2)}, \dots, u^{(q-1/2)}$, 
since these are the only saddles lower than $u^{(q+1/2)}$. Call this saddle 
$u^{(r)}$. But this would mean that $u^{(q+1)}$ belongs to the open valley
of $u^{(r)}$, which lies in $\smash{\cV^{(q-1)}}$. This contradicts the fact 
that $\cV^{(q-1)}$ does not contain $u^{(q+1)}$. We conclude that the above path 
from $u^{(q+1)}$ to $u^{(q)}$ is minimal. 

Furthermore, this shows that the open valley $\cO\cV(u^{(q+1/2)})$ has two connected components, 
one of them containing $u^{(q+1)}$, and the other one containing $u^{(q)}$. The latter will in 
fact contain all sinks $u^{(-q)}, \dots, u^{(q)}$ by the induction hypothesis. A similar argument holds for $u^{(-q-1/2)}$, which implies that $\cV^{(q)}$ has a similar decomposition as~\eqref{eq:valley_decomp}. The induction step is thus complete. 
\end{proof}

The next result provides information on the metastable hierarchy.

\begin{proposition}
The potential difference 
\begin{equation}
 \Delta U(q) = U(u^{(q+1/2)}) - U(u^{(q+1)})
\end{equation} 
is a decreasing function of $q$ for $0\leqs q \leqs \frac{n}{4} - 1$. 
\end{proposition}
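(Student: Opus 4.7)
The plan is to treat $\Delta U(q)$ as a smooth function of a real variable $q$, write it out using the explicit formulas already derived, and show that its derivative is nonpositive on the stated interval.

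By~\eqref{eq:potential_sink} applied at $q+1$ and~\eqref{eq:potential_saddle} applied at $r = q + \frac{1}{2}$,
\begin{equation*}
 \Delta U(q) = -\frac{K}{2\pi}(n-2)\cos\biggpar{\frac{2\pi(q+\tfrac12)}{n-2}}
 + \frac{K}{2\pi} n \cos\biggpar{\frac{2\pi(q+1)}{n}}\;.
\end{equation*}
Differentiating in $q$, the prefactors $(n-2)$ and $n$ cancel against the factors coming from the chain rule, yielding
\begin{equation*}
 \frac{\6}{\6q}\Delta U(q) = K\biggbrak{\sin\alpha(q) - \sin\beta(q)}\;,
 \qquad
 \alpha(q) = \frac{2\pi(q+\tfrac12)}{n-2}\;,\quad
 \beta(q) = \frac{2\pi(q+1)}{n}\;.
\end{equation*}
It therefore suffices to show $\sin\alpha(q) \leqs \sin\beta(q)$ for $0 \leqs q \leqs \frac{n}{4} - 1$.

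The key step is to observe that both $\alpha(q)$ and $\beta(q)$ lie in the interval $[0,\pi/2]$, where $\sin$ is monotone increasing. Indeed, at $q = \frac{n}{4} - 1$ a direct substitution gives $\alpha = \beta = \pi/2$, and both functions are increasing in $q$; at $q = 0$ they are manifestly positive. Hence on the stated range, proving the sine inequality reduces to $\alpha(q) \leqs \beta(q)$, i.e.,
\begin{equation*}
 \frac{q+\tfrac12}{n-2} \leqs \frac{q+1}{n}\;,
\end{equation*}
which after cross-multiplication (both denominators positive) collapses to $q \leqs \frac{n}{4}-1$. Equality holds only at the right endpoint, so $\Delta U$ is in fact strictly decreasing on $[0, \frac{n}{4}-1)$.

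There is no real obstacle: once one notices that both arguments of the sines happen to equal exactly $\pi/2$ at the endpoint $q = \frac{n}{4}-1$, the calculation falls out cleanly. The only mildly delicate point is the verification that $\alpha(q),\beta(q) \in [0,\pi/2]$ throughout, which is precisely what pins down the hypothesis $q \leqs \frac{n}{4}-1$ (the same threshold that governs stability of the $q$-twisted states in Lemma~\ref{lem:sinks}).
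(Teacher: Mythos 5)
Your proof is correct, and it takes a mildly different route from the paper's. Both compute $\tfrac{\6}{\6q}\Delta U(q) = K(\sin\alpha - \sin\beta)$ with $\alpha = \pi\tfrac{2q+1}{n-2}$, $\beta = \tfrac{2\pi(q+1)}{n}$; the paper then applies the sum-to-product formula $\sin\alpha - \sin\beta = 2\sin\tfrac{\alpha-\beta}{2}\cos\tfrac{\alpha+\beta}{2}$ and argues about the signs of the two factors, whereas you skip the identity and instead invoke monotonicity of $\sin$ on $[0,\pi/2]$ directly, reducing the problem to the elementary inequality $\alpha(q)\leqs\beta(q)$. A small point in your favor: the sum-to-product route actually requires not just $\alpha+\beta>0$ (which is all the paper explicitly states) but $\tfrac{\alpha+\beta}{2}<\tfrac{\pi}{2}$ so that the cosine factor is positive; your verification that both $\alpha$ and $\beta$ stay in $[0,\pi/2]$ — pinned down by the clean observation that both hit exactly $\pi/2$ at $q=\tfrac n4-1$ — supplies that bound explicitly and so is, if anything, slightly more complete.
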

\begin{proof}
Using~\eqref{eq:potential_sink} and~\eqref{eq:potential_saddle}, we get 
\begin{align}
 \frac{2\pi}{K} \Delta U(q) 
 &= n \cos\biggpar{\frac{2\pi(q+1)}{n}} - (n-2)\cos\biggpar{\pi\frac{2q+1}{n-2}} \\
 &=: n\cos(\beta) - (n-2)\cos(\alpha)\;.
\end{align} 
This expression is well-defined for any $q\in\R$. We can thus compute its derivative 
with respect to $q$, to obtain 
\begin{equation}
 \frac{1}{K} \frac{\6}{\6q} \Delta U(q) 
 = \sin\alpha - \sin\beta
 = 2 \sin\frac{\alpha-\beta}{2} \cos \frac{\alpha+\beta}{2}\;.
\end{equation}
The difference 
\begin{equation}
 \alpha - \beta = -\pi\frac{n - 4(q+1)}{n(n-2)}
\end{equation} 
belongs to $(-\pi,0)$ for $q+1 < 4n$, while the sum $\alpha+\beta$ is strictly positive. 
Therefore, the $q$-derivative of $\Delta U(q)$ is strictly negative, that is, $q\mapsto \Delta U(q)$ is decreasing. 
\end{proof}

This result shows that if only sinks with $q \geqs 0$ were present, we would have a clear 
metastable hierarchy $u^{(0)} \prec u^{(1)} \prec u^{(2)} \prec \dots$. The existence of the $q\mapsto -q$ symmetry makes the situation slightly degenerate, however, and we will have to examine the consequences of that. 


\section{Transition times} 
\label{sec:EK}

In this section, we estimate the time separating metastable transitions between various $q$-twisted states, or sets of $q$-twisted states. Certain expected transition times, between a $q$-twisted state and the set $\set{u^{(-\abs{q}+1)}, \dots, u^{(\abs{q}-1)}}$ of more stable states is directly obtained by applying the Eyring--Kramers relation~\eqref{eq:EK} to our situation. For more general transitions, computations are more involved. 


\subsection{Preliminary computations}
\label{ssec:EK_prelim} 

A central quantity governing expected transition times is the potential difference 
\begin{equation}
 H_q = U(u^{(q-1/2)}) - U(u^{(q)})\;, 
 \qquad 1 \leqs q < \frac n4\;.
\end{equation} 
For negative $q$, we set $H_q = H_{-q}$. The potential difference
\begin{equation}
 \bar H_q = U(u^{(q+1/2)}) - U(u^{(q)})\;, 
 \qquad 0 \leqs q < \frac n4
\end{equation} 
plays a role in transitions to less stable states. 
Again, for negative $q$, we set $\bar H_q = \bar H_{-q}$. 
While~\eqref{eq:potential_sink} and~\eqref{eq:potential_saddle} provide explicit 
expressions for these quantities, it is useful to understand their behavior for 
$\abs{q} \ll n$, as described by the following estimate.

\begin{lemma}
\label{lem:Hq} 
For fixed $q$ and $n$ large, one has 
\begin{align}
\label{e:Hdiff1}
 H_q &= \frac{K}{\pi} - \biggpar{q - \frac14} \frac{K\pi}{n} 
 + \mathcal{O}\bigpar{n^{-2}}\;, 
 \qquad 1\leqs q \ll n\;,\\
\label{e:Hdiff2}
 \bar H_q &= \frac{K}{\pi} + \biggpar{q + \frac14} \frac{K\pi}{n} 
 + \mathcal{O}\bigpar{n^{-2}}\;,
 \qquad  0\leqs q \ll n\;. 
\end{align}
\end{lemma}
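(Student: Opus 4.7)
The plan is to substitute the closed-form expressions~\eqref{eq:potential_sink} and~\eqref{eq:potential_saddle} into the definitions of $H_q$ and $\bar H_q$, and to Taylor-expand the resulting cosines. Explicitly, I would write
\begin{equation}
H_q = \frac{K}{2\pi}\Bigl[n\cos\bigl(\tfrac{2\pi q}{n}\bigr) - (n-2)\cos\bigl(\tfrac{\pi(2q-1)}{n-2}\bigr)\Bigr],
\end{equation}
with the analogous expression for $\bar H_q$ obtained by replacing $(2q-1)$ with $(2q+1)$.

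For $q$ fixed as $n\to\infty$, both cosine arguments tend to zero, so I expand $\cos\theta = 1 - \theta^2/2 + O(\theta^4)$. The constant terms contribute $n - (n-2) = 2$, producing the leading value $K/\pi$. The quadratic corrections yield $-\tfrac{2\pi^2 q^2}{n}$ from the first cosine and $-\tfrac{\pi^2(2q-1)^2}{2(n-2)}$ from the second. Using $\tfrac{1}{n-2} = \tfrac{1}{n} + O(n^{-2})$, these combine into
\begin{equation}
\frac{K\pi}{n}\Bigl[\tfrac{(2q-1)^2}{4} - q^2\Bigr] + O(n^{-2}).
\end{equation}
The key algebraic simplification is the identity $\tfrac{(2q\mp 1)^2}{4} - q^2 = \mp q + \tfrac14$, which immediately delivers~\eqref{e:Hdiff1} and~\eqref{e:Hdiff2}.

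I do not anticipate any real obstacle; the argument is a routine Taylor expansion combined with one elementary algebraic identity. The only point requiring attention is the uniformity of the $O(n^{-2})$ remainder: the quartic Taylor remainder contributes $O(q^4/n^3)$ and the expansion of $1/(n-2)$ contributes $O(q^2/n^2)$, so the $O(n^{-2})$ bound is genuine provided $q = O(1)$, and the regime $q \ll n$ appearing in the statement would require either a slightly weaker error bound (such as $O(q^2/n^2)$) or the restriction $q = o(\sqrt{n})$ to preserve $O(n^{-2})$ exactly.
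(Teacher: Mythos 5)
Your proof is correct, and it takes a cleaner, more direct route than the paper. You expand both cosine expressions to second order for general $q$ and close the computation with the identity $\frac{(2q\mp1)^2}{4} - q^2 = \mp q + \frac14$. The paper instead establishes the base cases $\bar H_0$ and $H_1$ by Taylor expansion, then rewrites the increments $U(u^{(q+1)}) - U(u^{(q)})$ and $U(u^{(q+3/2)}) - U(u^{(q+1/2)})$ via sum-to-product formulas, estimates each as $(2q+1)\frac{K\pi}{n}$ and $2(q+1)\frac{K\pi}{n}$ respectively, and concludes by telescoping in $q$. Both arguments are elementary Taylor-expansion computations; yours avoids the induction and the sum-to-product step entirely, at the cost of a slightly longer single algebraic simplification, while the paper's telescoping makes visible that each increment in $q$ shifts $H_q$ and $\bar H_q$ by exactly $\mp K\pi/n$ to leading order. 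Your closing remark about the uniformity of the $O(n^{-2})$ error is a reasonable observation, but the lemma's opening phrase ``for fixed $q$'' already resolves the tension: the implicit constant in the error term is allowed to depend on $q$, and the constraint $q\ll n$ simply indicates the regime in which the first two terms dominate.
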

\begin{proof}
The values of $\bar H_0$ and $H_1$ are obtained by a direct Taylor expansion, using~\eqref{eq:potential_sink} and~\eqref{eq:potential_saddle}.
For any $0\leqs q < \frac n4$, \eqref{eq:potential_sink}, \eqref{eq:potential_saddle}, and sum-product formulas yield
\begin{align}
 U(u^{(q+1)}) - U(u^{(q)}) 
 &= \frac{Kn}{\pi} \sin\biggpar{\frac\pi n}\sin\biggpar{\frac{\pi(2q+1)}{n}}
 = (2q + 1) \frac{K\pi}{n} + \mathcal{O}\bigpar{n^{-3}}\;,\\
 U(u^{(q+3/2)}) - U(u^{(q+1/2)}) 
 &= \frac{K(n-2)}{\pi} \sin\biggpar{\frac\pi{n-2}}\sin\biggpar{\frac{2\pi(q+1)}{n-2}}\\
 &= 2(q + 1) \frac{K\pi}{n} + \mathcal{O}\bigpar{n^{-2}}\;.
\end{align} 
The result then follows by induction on $q$, using 
\begin{equation}
 \bar H_{q+1} 
 = \bar H_q + U(u^{(q+1)}) - U(u^{(q)}) - \bigbrak{U(u^{(q+1)}) - U(u^{(q)})}\;, 
\end{equation} 
and a similar relation between $H_{q+1}$ and $H_q$. 
\end{proof}
Another important quantity is the product of eigenvalues appearing in the prefactor in the Eyring--Kramers law~\eqref{eq:EK}. 
The eigenvalues $\lambda_k$ at the sink $u^{(q)}$ can be written, by~\eqref{eq:lambda_sink}, as 
\begin{equation}
\label{e:lam}
 \lambda_k = 2\pi K \cos\biggpar{\frac{2\pi (q+1)}{n}} \lambda^0_k\;, \qquad  
 \lambda_k^0 = 4\sin^2 \biggpar{\frac{\pi k}{n}}\;.
\end{equation} 
Recall from~\eqref{eq:ev_lambdak0} that the $\lambda_k^0$ are also the eigenvalues of the discrete Laplacian with periodic boundary conditions. 
The eigenvalues $\mu_k$ at the saddles are given  by 
\begin{equation}
\label{e:mu}
 \mu_k = 2\pi K \cos\biggpar{\frac{2\pi \hat q}{n}}\nu_k\;, 
 \qquad 
 \hat q = \biggpar{q + \frac12}\frac{n}{n-2}\;,
\end{equation} 
where the $\nu_k$ are the eigenvalues of $-M$ (cf.~\eqref{eq:Hessian_saddle}). 
Combining \eqref{e:lam} and \eqref{e:mu}, we get
\begin{equation}
\label{eq:prefactor} 
 \frac{\abs{\mu_1}\mu_2\dots\mu_{n-1}}{\lambda_1\dots\lambda_{n-1}}
 = \Biggpar{\frac{\cos\bigpar{\frac{2\pi\hat q}{n}}}{\cos\bigpar{\frac{2\pi(q+1)}{n}}}}^n 
 \frac{\abs{\nu_1}\dots\nu_{n-1}}{\lambda_1^0\dots\lambda_{n-1}^0}\;.
\end{equation} 
Taking logarithms and expanding, we have 
\begin{equation}
\begin{split}
 \log \Biggpar{\frac
 {\cos\bigpar{\frac{2\pi\hat q}{n}}}{\cos\bigpar{\frac{2\pi(q+1)}{n}}}}^n 
 &= n\log \frac{1 - \frac{2\pi^2\hat q^2}{n^2} + \Order{n^{-4}}}{1 - \frac{2\pi^2(q+1)^2}{n^2} + \Order{n^{-4}}} \\
 &= \frac{2\pi^2}{n} ( (q+1)^2 - \hat{q}^2)  + \bigOrder{n^{3}} \\
 &= \frac{2\pi^2}{n} \biggpar{q + \frac34} + \bigOrder{n^{-2}}\;.
\end{split}
\end{equation}
We conclude that
\begin{equation}
\label{e:cosratio}
 \Biggpar{{\frac{\cos\bigpar{\frac{2\pi\hat q}{n}}}{\cos\bigpar{\frac{2\pi(q+1)}{n}}}}}^n 
 = 1 + \frac{\pi^2(4q+3)}{2n} + \bigOrder{n^{-2}}\;.
\end{equation} 
In fact, the error for large but finite $n$ has order $1/n$, and has been computed above.
On the other hand, the second factor in~\eqref{eq:prefactor} is described 
by the following result. 

\begin{proposition}
We have the exact relation 
\begin{equation}
\label{e:nulam0ratio}
 \frac{\nu_1\dots\nu_{n-1}}{\lambda_1^0\dots\lambda_{n-1}^0}
 = -1 + \frac{2}{n}\;.
\end{equation} 
\end{proposition}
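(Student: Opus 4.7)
The plan is to exploit the observation that both $M$ and the periodic Laplacian $-L$ are \emph{opposite} rank-one perturbations of the Neumann Laplacian $-D$: a direct inspection of the defining matrices shows that $M = D + \psi\psi^\top$ and $-L = -D + \psi\psi^\top$, so that $L = D - \psi\psi^\top$. Applying the matrix-determinant lemma to each pencil therefore produces
\begin{align}
\det(\zeta I - M) &= \det(\zeta I - D)\bigbrak{1 - \psi^\top(\zeta I - D)^{-1}\psi}\;, \\
\det(\zeta I - L) &= \det(\zeta I - D)\bigbrak{1 + \psi^\top(\zeta I - D)^{-1}\psi}\;,
\end{align}
valid whenever $\zeta$ avoids the spectrum of $D$. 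Summing collapses the rank-one correction and yields the clean polynomial identity
\begin{equation}
\det(\zeta I - M) + \det(\zeta I - L) = 2\det(\zeta I - D)\;,
\end{equation}
which extends to all $\zeta\in\C$ by polynomial continuation.

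The second step is to factor out the common simple zero at $\zeta = 0$. Since $D\1 = 0$, $L\1 = 0$, and $\psi^\top\1 = 0$ (so $M\1 = 0$ as well), each characteristic polynomial carries exactly one factor of $\zeta$, which I write as
\begin{equation}
\det(\zeta I - M) = \zeta\prod_{k=1}^{n-1}(\zeta + \nu_k)\;, \quad
\det(\zeta I - L) = \zeta\prod_{k=1}^{n-1}(\zeta + \lambda_k^0)\;, \quad
\det(\zeta I - D) = \zeta\prod_{k=1}^{n-1}(\zeta + \nu_k^0)\;.
\end{equation}
Cancelling $\zeta$ in the polynomial identity above and evaluating at $\zeta = 0$ then yields the key formula
\begin{equation}
\prod_{k=1}^{n-1}\nu_k = 2\prod_{k=1}^{n-1}\nu_k^0 - \prod_{k=1}^{n-1}\lambda_k^0\;.
\end{equation}

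All that remains is to plug in the two reference products. The standard sine-product identity $\prod_{k=1}^{N-1} 2\sin(\pi k/N) = N$ gives immediately $\prod_{k=1}^{n-1}\lambda_k^0 = \prod_{k=1}^{n-1} 4\sin^2(\pi k/n) = n^2$, and the same identity with $N = 2n$, together with $\sin(\pi k/(2n)) = \sin(\pi(2n-k)/(2n))$ to pair the upper half, gives $\prod_{k=1}^{n-1}\nu_k^0 = \prod_{k=1}^{n-1} 4\sin^2(\pi k/(2n)) = n$; both values are also immediate from the matrix-tree theorem applied to $C_n$ and $P_n$. Substituting produces $\prod_{k=1}^{n-1}\nu_k = 2n - n^2 = -n(n-2)$, and dividing by $n^2$ gives the claimed $-(n-2)/n = -1 + 2/n$. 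The only step requiring a moment of insight is spotting the linear combination of the two matrix-determinant expansions that eliminates the rank-one correction; after that the computation is bookkeeping with well-known discrete Laplacian spectra.
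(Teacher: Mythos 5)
Your proof is correct, and it takes a genuinely different route from the paper. The paper forms the ratio $r(\eps) = \det(\eps\one - M)/\det(\eps\one - L)$ directly, rewrites it as the Fredholm determinant $\det\bigl[\one - 2N(\eps\one-L)^{-1}\bigr]$ (using $M = L + 2N$), and then carries out an explicit Fourier-mode computation with the DFT eigenvectors of $L$ to show $r(\eps) = 1 - 4a_1(\eps)$ with $a_1(\eps) \to \frac{n-1}{2n}$. You instead notice that $M$ and $L$ are \emph{opposite} rank-one perturbations of the Neumann Laplacian $D$ (the paper records $D = L + N$, i.e.\ $L = D - \psi\psi^\top$, but does not exploit this symmetry), so the two matrix-determinant-lemma expansions sum to the clean polynomial identity $\det(\zeta\one - M) + \det(\zeta\one - L) = 2\det(\zeta\one - D)$. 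Dividing out the common simple zero at $\zeta = 0$ and evaluating there reduces everything to the classical products $\prod_{k=1}^{n-1}\lambda_k^0 = n^2$ and $\prod_{k=1}^{n-1}\nu_k^0 = n$, which are immediate either from sine-product identities or from the matrix-tree theorem applied to $C_n$ and $P_n$. Your route entirely sidesteps the explicit diagonalization and the $\eps\to 0$ limit of a ratio; what the paper's computation buys in exchange is the closed form $r(\eps) = 1 - 4a_1(\eps)$ for all $\eps$, not just at $\eps = 0$, which could in principle give additional spectral information, but this is not used elsewhere. One small point worth making explicit in a write-up: the cancellation of $\zeta$ on both sides requires that $0$ be a simple eigenvalue of $M$, which follows from the interlacing argument already established in the proof of Proposition~\ref{thm:saddle} (or, retroactively, from the fact that your formula gives $\prod_{k\ge 1}\nu_k = -n(n-2) \ne 0$ for $n \ge 3$).
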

\begin{proof}
Let $L$ be the matrix~\eqref{eq:L} of the discrete Laplacian 
with periodic boundary conditions. 
We observe that $D = L + N$. The $\lambda_k^0$ are eigenvalues of $-L$, while the 
$\nu_k$ are eigenvalues of $-M = -D-N = -L-2N$. 

To avoid problems due to the zero eigenvalues, we introduce for $\eps\in\R$ the quantity
\begin{equation}
 r(\eps) = \frac{\det(\eps\one -M)}{\det(\eps\one -L)}
 = \frac{(\eps+\nu_0)(\eps+\nu_1)\dots(\eps+\nu_{n-1})}{(\eps+\lambda_0^0)(\eps+\lambda_1^0)\dots(\eps+\lambda_{n-1}^0)}\;.
\end{equation} 
This function is well-defined whenever $\eps$ is not an eigenvalue of $L$. Moreover, 
since $\nu_0 = \lambda_0^0 = 0$, we have 
\begin{equation}
\label{eq:proof_ratio_ev1} 
 \frac{\nu_1\dots\nu_{n-1}}{\lambda_1^0\dots\lambda_{n-1}^0}
 = \lim_{\eps\to0} r(\eps)\;.
\end{equation} 
Now we observe that we can write $r(\eps)$ as a Fredholm determinant 
\begin{align}
r(\eps) 
&= \det\bigbrak{(\eps\one -M)(\eps\one-L)^{-1}} \\
&= \det\bigbrak{\one - 2N(\eps\one-L)^{-1}}\;.
\end{align}
The inverse of $\eps\one-L$ can be written as 
\begin{equation}
 (\eps\one-L)^{-1}
 = \sum_{k=0}^{n-1} \frac{1}{\eps+\lambda_k^0} \Pi_k\;,
\end{equation} 
where $\Pi_k$ is the projector on the eigenspace associated with $\lambda_k^0$, given by 
\begin{equation}
 \Pi_k = \eta_k \bar\eta_k^\top\;, \qquad 
 \eta_k^\top = \frac{1}{\sqrt{n}}
 \begin{pmatrix}
  1 & \varpi^k & \varpi^{2k} & \dots & \varpi^{(n-1)k}\;
 \end{pmatrix}, \qquad 
  \varpi = \e^{\icx 2\pi/n}\;.
\end{equation} 
Note that all $\lambda_k^0$ except $\lambda_0^0 = 0$ and $\lambda_{n/2}^0 = 4$ (if $n$ is even)
have multiplicity $2$, so that the choice of the $\Pi_k$ is not unique, but the above choice is 
a valid one. Since 
\begin{equation}
 \pscal{\psi}{\eta_k} = 
 \frac{1 - \varpi^{-k}}{\sqrt{n}}\;, 
\end{equation} 
it follows that 
\begin{equation}
 N(\eps\one-L)^{-1} 
 = \sum_{k=0}^{n-1} \frac{1}{\eps+\lambda_k^0} \pscal{\psi}{\eta_k} \psi \eta_k^\top
 = 
 \begin{pmatrix}
  a_1 & a_2 & \dots & a_{n} \\
  0 & 0 & \dots & 0 \\
  \vdots & & & \vdots \\
  0 & 0 & \dots & 0 \\
  -a_1 & -a_2 & \dots & -a_{n} 
 \end{pmatrix}\;,
\end{equation} 
where
\begin{equation}
 a_j = \frac{1}{n} \sum_{k=0}^{n-1} \frac{1}{\eps+\lambda_k^0} 
 (1-\varpi^k)\varpi^{k(1-j)}\;.
\end{equation} 
In particular, for $j=1$, grouping the summands $k$ and $n-k$ one obtains 
\begin{equation}
 a_1 = \frac{1}{2n} \sum_{k=1}^{n-1} \frac{\lambda_k^0}{\eps+\lambda_k^0}\;.
\end{equation} 
A similar computation shows that $a_n = -a_1$. Now we note that 
\begin{align}
 r(\eps) = \det\bigbrak{\one - 2N(\eps\one-L)^{-1}} 
 &= 
 \begin{vmatrix}
  1-2a_1 & -2a_2 & \dots & -2a_{n-1} & -2 a_n \\
  0 & 1 & & 0& 0\\
  \vdots & & \ddots & & \vdots \\
  0 & 0 & & 1 & 0 \\
  2a_1 & 2a_2 & \dots & 2a_{n-1} & 1 + 2 a_n 
 \end{vmatrix} \\
 &= (1-2a_1)(1+2a_n) - 4a_1a_n \\
 &= 1 - 2a_1 + 2 a_n \\
 &= 1 - 4a_1\;.
\end{align} 
As $\eps$ tends to $0$, $a_1$ converges to $\frac{n-1}{2n}$. Substituting in~\eqref{eq:proof_ratio_ev1}
yields the result. 
\end{proof}

The last missing piece in order to determine the prefactor in the Eyring--Kramers law 
is the behavior of $\nu_1$. This is described by the following result.

\begin{proposition}
The negative eigenvalue $\nu_1$ of $-M$ satisfies 
\begin{equation}
\label{e:nu1}
 -\frac43 \leqs \nu_1 \leqs -\frac43 + \frac{1}{3^{n-3}}\;.
\end{equation} 
\end{proposition}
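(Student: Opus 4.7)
I would approach both inequalities through the secular equation of Lemma~\ref{lem:rank_one_ev}. That result characterises $\nu_1$ as the unique solution of $F(\nu)=1$ in $(-\infty,\nu_1^0)$, where $F$ is strictly increasing, so $\nu_1\geqs -\tfrac{4}{3}$ is equivalent to $F(-\tfrac{4}{3})\leqs 1$. For the upper bound it is more convenient to switch to the Rayleigh characterisation $\nu_1 = \min_{\varphi\neq 0}\langle\varphi,-M\varphi\rangle/\|\varphi\|^2$ of the smallest eigenvalue of $-M$.

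\textbf{Lower bound.} For $\nu_1\geqs -\tfrac{4}{3}$ the plan is to evaluate $F(-\tfrac{4}{3})$ in closed form. With $\theta_k=\pi k/(2n)$ and $\phi_k=\pi k/n$, elementary identities give $4\sin^2\theta_k+\tfrac{4}{3}=\tfrac{2}{3}(5-3\cos\phi_k)$ and $\cos^2\theta_k=\tfrac{1}{2}(1+\cos\phi_k)$, so that
\[
 F(-\tfrac{4}{3})=\frac{6}{n}\sum_{k\text{ odd}}\frac{1+\cos\phi_k}{5-3\cos\phi_k}.
\]
The key input is the Poisson-kernel-type expansion $\tfrac{1}{5-3\cos\phi}=\tfrac{1}{4}\sum_{j\in\Z}3^{-|j|}e^{ij\phi}$, valid because $3r^2-10r+3=0$ has the root $r=\tfrac{1}{3}$. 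Combined with the reduction $\tfrac{1+\cos\phi}{5-3\cos\phi}=\tfrac{8}{3(5-3\cos\phi)}-\tfrac{1}{3}$, this lets me interchange summation orders and reduces the problem to evaluating the character sums $T_j=\sum_{k\text{ odd}}\cos(j\pi k/n)$. A short case analysis (on whether $j$ is a multiple of $n$, and on the parity of $n$) gives $T_j$ explicitly as $\tfrac{1}{2}(-1)^{j+1}$ plus an exceptional contribution at $j=\ell n$; summing the two resulting geometric series collapses everything, in both parities of $n$, to the identity
\[
 F(-\tfrac{4}{3})=1-\frac{4}{3^n+1}<1,
\]
and the lower bound follows.

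\textbf{Upper bound.} For $\nu_1\leqs -\tfrac{4}{3}+3^{3-n}$ I would use the Rayleigh quotient with the test vector $\varphi_i=3^i-3^{n-1-i}$, $0\leqs i\leqs n-1$. This is the antisymmetric solution of the interior recurrence $\varphi_{i-1}+\varphi_{i+1}=\tfrac{10}{3}\varphi_i$ suggested by $3+\tfrac{1}{3}=\tfrac{10}{3}=2-(-\tfrac{4}{3})$. A direct computation at the interior and at the two Neumann-type boundary sites yields
\[
 -M\varphi=-\tfrac{4}{3}\varphi-\tfrac{8}{3}\psi,
\]
the entire defect being supported on the boundary components. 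Computing $\langle\varphi,\psi\rangle=-2(3^{n-1}-1)$ and $\|\varphi\|^2=\tfrac{1}{4}(9^n-1)-2n\cdot 3^{n-1}$, the Rayleigh bound becomes
\[
 \nu_1\leqs -\frac{4}{3}+\frac{16(3^{n-1}-1)}{3\|\varphi\|^2},
\]
and an elementary polynomial inequality, equivalent to $\tfrac{179}{3}\cdot 9^n\geqs 216n\cdot 3^n$ (itself an easy consequence of $3^n\geqs 4n$ for $n\geqs 2$), shows the correction is at most $3^{3-n}$ for all $n\geqs 5$.

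\textbf{Main obstacle.} The principal technical step is the closed-form evaluation of $F(-\tfrac{4}{3})$: the character sums $T_j$ behave differently depending on the parity of $n$ and on whether $n\mid j$, and combining the two geometric series without errors requires some care. The pleasant surprise is that the final expression $1-4/(3^n+1)$ is parity-independent; once this identity is in hand, both the lower bound and the choice of the antisymmetric test vector for the Rayleigh upper bound are natural.
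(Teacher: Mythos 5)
Your proof is correct and takes a genuinely different route from the paper's. For the lower bound, the paper restricts $M$ to the antisymmetric subspace $E = \{u : u_i = -u_{n-1-i}\}$, constructs a modified matrix $\widetilde{M}$ with exact eigenvalue $\frac43$, and runs a secular-equation argument on the restricted problem; you instead evaluate $F(-\frac43)$ in closed form via the Poisson-kernel expansion $4/(5-3\cos\phi) = \sum_{j}3^{-|j|}e^{ij\phi}$, arriving at the clean and parity-independent identity $F(-\frac43) = 1 - 4/(3^n+1)$. I verified this numerically for several $n$, and together with the strict monotonicity of $F$ on $(-\infty,\nu_1^0)$ it even gives the strict inequality $\nu_1 > -\frac43$, slightly stronger than what the paper states. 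For the upper bound, the paper again uses a secular equation on $\widetilde{M}$, while you apply the Rayleigh quotient with the explicit antisymmetric test vector $\varphi_i = 3^i - 3^{n-1-i}$; I confirmed the defect identity $M\varphi = \frac43\varphi + \frac83\psi$ (the error lives entirely on the two boundary sites), along with $\langle\varphi,\psi\rangle = -2(3^{n-1}-1)$ and $\|\varphi\|^2 = \frac14(9^n-1) - 2n\cdot 3^{n-1}$. Your approach buys two things: it avoids the paper's step of justifying that the eigenvector of $\nu_1$ lies in $E$ (which the paper asserts without explicit argument — correct, because $N$ annihilates the symmetric subspace so that $-M$ restricted there is the positive semidefinite $-D$, but worth spelling out), and it yields the clean identity for $F(-\frac43)$ as a by-product. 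The paper's route has the advantage of treating the eigenvector structure explicitly, and its secular-equation computation gives both bounds from a single equality $\nu_1 + \frac43 = \langle\tilde\varphi_1,\hat e_{m-1}\rangle^2/(\frac34+S)$ with $S>0$. One small inaccuracy in your write-up: the final elementary inequality you need is $(648n-192)3^n + 243 \leqs 179\cdot 9^n$, which is \emph{implied by}, but not \emph{equivalent to}, your stated $\frac{179}{3}\cdot 9^n \geqs 216n\cdot 3^n$ (the extra term $192\cdot 3^n - 243 > 0$ gives you slack in the right direction, so the argument closes, but the word "equivalent" overstates it).
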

\begin{proof}
Consider the subspace $E = \set{u\in(\R/\Z)^{\Lambda} \colon u_i = -u_{n-1-i} \forall i\in\set{0,\dots,n-1}}$. This subspace has dimension $m=n/2$ if $n$ is even, 
and $m=(n-1)/2$ if $n$ is odd. One easily checks that it is invariant under 
the matrix $M$. We claim that the eigenvector $v_1$ corresponding to 
$\nu_1$ belongs to $E$. 

A basis of $E$ is given by $(\hat e_0, \dots, \hat e_{m-1}) = (e_0 - e_{n-1}, e_1 - e_{n-2}, \dots, e_{m-1} - e_{n-m})$.
The form $\widehat{M}$ of $M$ in this basis is slightly different depending on the parity of 
$n$. We consider first the case where $n$ is even. Then 
\begin{equation}
 \widehat{M} = 
 \begin{pmatrix}
  1 & 1 & 0 & \dots & \dots & 0 & 0 \\
  1 & -2 & 1 & 0 & \dots & \dots & 0 \\
  0 & 1 & -2 & 1 & 0 & \dots & 0 \\
  \vdots & \ddots & \ddots & \ddots & \ddots & \ddots & \vdots \\
  0 & \dots & 0 & 1 & -2 & 1 & 0 \\
  0 & \dots & \dots & 0 & 1 & -2 & 1 \\
  0 & 0 & \dots & \dots & 0 & 1 & -3
 \end{pmatrix}\;. 
\end{equation} 
Observe that the vector
\begin{equation}
 v = \Bigpar{1, \frac13, \frac19, \dots, \frac{1}{3^{m-1}}}^\top
\end{equation} 
satisfies 
\begin{equation}
 \widehat{M} v = \frac43 \Bigpar{v - \frac{1}{3^{m-1}}\hat e_{m-1}}\;.
\end{equation} 
As $m\to\infty$, this reduces to $\widehat{M} v = \frac43 v$, suggesting that $-\widehat{M}$ has 
an eigenvalue close to $-\frac43$. 
To prove this, we consider the matrix 
\begin{equation}
 \widetilde{M} = \widehat{M} + \frac43 \hat e_{m-1} \hat e_{m-1}^\top\;.
\end{equation} 
This matrix differs from $\widehat{M}$ only in the bottom right element, which has value $-\frac53$ 
instead of $-3$. It satisfies $\widetilde{M} v = \frac43 v$, and therefore admits $\frac43$ as an 
eigenvalue. Observe that $\widetilde{M}$ is a rank-$1$ perturbation of the discrete Laplacian 
with periodic boundary conditions $L$, namely 
\begin{equation}
 \widetilde{M} = L + \tilde\psi \tilde\psi^\top\;, 
 \qquad 
 \tilde\psi^\top = 
 \begin{pmatrix}
  \sqrt{3} & 0 & \dots & 0 & -\frac{1}{\sqrt{3}}
 \end{pmatrix}\;.
\end{equation} 
We claim that $-\frac43$ is the only negative eigenvalue of $-\widetilde{M}$.
To show this, we proceed similarly to the proof of Lemma~\ref{lem:rank_one_ev}. 
Let $\tilde\mu$ be an eigenvalue of $-\widetilde{M}$. This means that there exists a 
vector $\tilde\ph \neq 0$ such that 
\begin{equation}
\bigpar{L + \tilde\psi \tilde\psi^\top +\tilde\mu} \tilde\ph = 0\;,
\end{equation} 
and therefore 
\begin{equation}
\label{eq:ev_Ltildemu} 
 \bigpar{L + \tilde \mu} \tilde\ph = -\pscal{\tilde\psi}{\tilde\ph}\tilde\psi\;.
\end{equation} 
If $\tilde\mu$ is an eigenvalue of $-L$, then $\tilde\mu\geqs0$, and the claim is 
proved. We thus assume that $\tilde\mu$ is not an eigenvalue of $-L$. Then 
\begin{equation}
\label{eq:phtilde} 
 \tilde\ph = -\pscal{\tilde\psi}{\tilde\ph} \bigpar{L+\tilde\mu}^{-1} \tilde\psi\;.
\end{equation} 
We must have $\pscal{\tilde\psi}{\tilde\ph} \neq 0$, since otherwise~\eqref{eq:ev_Ltildemu} would imply that $\tilde\mu$ is an eigenvalue of $-L$. Taking the inner product of~\eqref{eq:phtilde} with 
$\tilde\psi$, and dividing by $\pscal{\tilde\psi}{\tilde\ph}$, we obtain 
\begin{equation}
 -1 = \pscal{\tilde\psi}{(L+\tilde\mu)^{-1}\tilde\psi}
 = \sum_k \frac{\pscal{\tilde\psi}{\ph^0_k}^2}{\tilde\mu - \lambda^0_k} =: F_1(\tilde\mu)\;.
\end{equation} 
Here the sum ranges over the eigenvalues $\lambda^0_k$ of $-L$, with associated normalized eigenvectors 
$\ph^0_k$. The function $F_1$ has poles on the eigenvalues of $-L$, which are non-negative, and is otherwise strictly decreasing. Therefore, the equation $F_1(\tilde\mu)$ can have only one strictly negative eigenvalue, and the claim is proved. 

Consider now the eigenvalue equation $(\widehat{M} + \nu_1)\ph_1 = 0$, which is equivalent to 
\begin{equation}
 \bigpar{\widetilde{M} + \nu_1}\ph_1 = \frac43 \pscal{\hat e_{m-1}}{\ph_1}\hat e_{m-1}\;.
\end{equation} 
If $\nu_1 = \tilde\mu_1 = -\frac43$, the proposition is proved. Otherwise, $\nu_1$ 
is not an eigenvalue of $\widetilde{M}$, and $\pscal{\hat e_{m-1}}{\ph_1} \neq 0$, so that we get 
\begin{equation}
 \frac34 = \pscal{\hat e_{m-1}}{(\widetilde{M} + \nu_1)^{-1}\hat e_{m-1}}
 = \sum_k \frac{\pscal{\hat e_{m-1}}{\tilde\ph_k}^2}{\nu_1-\tilde\mu_k}\;,
\end{equation} 
where the sum ranges over the eigenvalues $\tilde\mu_k$ of $\widetilde{M}$, with associated 
normalized eigenvectors $\tilde\ph_k$. 
Isolating the term $k=1$, we obtain 
\begin{equation}
 \nu_1 + \frac43 = \frac{\pscal{\tilde\ph_1}{\hat e_{m-1}}^2}{\frac34 + S}\;, 
 \qquad 
 S = \sum_{k\neq1} \frac{\pscal{\hat e_{m-1}}{\tilde\ph_k}^2}{\tilde\mu_k-\nu_1}\;.
\end{equation} 
Since $\nu_1 < 0$ and all $\tilde\mu_k$ except $\tilde\mu_1$ are positive, $S$ is positive. 
Furthermore, since $\tilde\ph_1 = v/\norm{v}$ we have 
\begin{equation}
 \pscal{\tilde\ph_1}{\hat e_{m-1}}^2
 = \frac{\pscal{v}{\hat e_{m-1}}^2}{\norm{v}^2}
 = \frac{9}{8(1-9^{-m})3^{2m-2}}
 \leqs \frac{1}{4\cdot 3^{2m-2}}\;.
\end{equation} 
This yields the upper bound on $\nu_1$, using the fact that $2m\geqs n-2$. 

It remains to consider the case where $M$ is odd. Then 
\begin{equation}
 \widehat{M} = 
 \begin{pmatrix}
  1 & 1 & 0 & \dots & \dots & 0 & 0 \\
  1 & -2 & 1 & 0 & \dots & \dots & 0 \\
  0 & 1 & -2 & 1 & 0 & \dots & 0 \\
  \vdots & \ddots & \ddots & \ddots & \ddots & \ddots & \vdots \\
  0 & \dots & 0 & 1 & -2 & 1 & 0 \\
  0 & \dots & \dots & 0 & 1 & -2 & 1 \\
  0 & 0 & \dots & \dots & 0 & 1 & -2
 \end{pmatrix}
 = \widetilde{M} - \frac13 \hat e_{m-1}\hat e_{m-1}^\top\;. 
\end{equation} 
Then a completely analogous argument shows that $\nu_1 \leqs -\frac43 + \frac14 3^{3-2m}
= -\frac43 + \frac14 3^{4-n}$, which is smaller than the stated bound.
\end{proof}


\subsection{Expected transition time from less stable to more stable states}
\label{ssec:EK_stable} 

The following result follows essentially from~\cite{BEGK2004}, except for the fact that the potential $U$ has the point symmetry $U(-u) = U(u)$. A generalization of the results from~\cite{BEGK2004} to potentials invariant under a discrete symmetry group has been obtained in~\cite{BD15} for continuous-time Markov chains on a finite set, and in~\cite{Dutercq_thesis} for stochastic differential equations. See also~\cite{BD16} for an application to a discretized Allen--Cahn equation with conserved total mass. 

Below, we will write 
\begin{equation}
 \cM_q = \set{u^{(-q)},u^{(-q+1)},\dots,u^{(q)}}
\end{equation} 
for the $q$th metastable set, while the distance between two discrete sets $A, B\subset (\R/\Z)^\Lambda$ is given by $\dist(A,B) = \inf_{a\in A, b\in B} \norm{a-b}$. 

\begin{theorem}
\label{thm:main} 
Fix $q$ such that $0\leqs q < \frac n4$, $\delta > 0$, and let 
\begin{equation}
 \tau_{\cM_q} = \inf\Bigset{t > 0 \colon \dist\bigpar{u_t, \cM_q} < \delta}
\end{equation}
be the first-hitting time of a $\delta$-neighborhood of the set $\cM_q$. 
Then the expectation of $\tau_{\cM_q}$, starting from $u^{(q+1)}$, satisfies 
\begin{equation}
\expecin{u^{(q+1)}}{\tau_{\cM_q}}
 = C(q,n) \e^{H_{q+1}/\eps} \bigbrak{1 + R_n(\eps)}\;, 
\end{equation} 
where 
\begin{align}
\label{e:Casympt}
 C(q,n) &= \frac{3}{4 K n} \left(1 + \frac{\pi^2(4q+3)-4}{4n}\right) + \mathcal{O}\bigpar{n^{-3}}\;, \\
\label{eq:Hasympt}
 H_{q+1} &= U(u^{(q+1/2)}) - U(u^{(q+1)})
 = \frac{K}{\pi} - \biggpar{q + \frac34} \frac{K\pi}{n} 
 + \mathcal{O}\bigpar{n^{-2}}\;, \\
 \lim_{\eps\to 0} R_n(\eps) &= 0\;.
 \label{eq:Rasympt}
\end{align} 
\end{theorem}

\begin{proof}
The proof is almost an application of~\cite[Theorem~3.2]{BEGK2004}, which provides an 
Eyring--Kramers law of the form~\eqref{eq:EK}, except that we have to deal with degeneracies in the potential landscape.
The extra factor $n^{-1}$ in the expected transition time is due to the existence of 
$n$ saddles, having the same height, each providing an optimal pathway between the $q$-twisted 
state $u^{(q+1)}$ and the set $\cM_q$. As a result, the integer 
called $k$ in~\cite[Equation (3.2)]{BEGK2004} is equal to $n$, while the eigenvalues of the 
saddles are the same for all summands. 

The difference between our situation and the one described in~\cite[Theorem~3.2]{BEGK2004} is 
that for any $q\neq0$, the sinks $u^{(q)}$ and $u^{(-q)}$ are at the same height, as are 
the relevant saddles. Such symmetric situations have been analysed in~\cite{BD15} for continuous-time Markov chains, and in~\cite[Chapter 5]{Dutercq_thesis} for diffusion processes. 

In our case, the set of sinks $u^{(q)}$ is invariant under the group $G = \set{\id, I}$, where $\id$ is the identity map on the phase space $(\R/\Z)^\Lambda$, while $I$ is the inversion given by $I(u) = -u$. Note that $G$ is equivalent to $\Z_2 = \Z/2\Z$. There are two important group-theoretic quantities that influence the Eyring--Kramers law:
\begin{itemize}
\item   The \emph{orbit} of a point $u\in(\R/\Z)^\Lambda$ is defined as the set $O_u = \set{g(u) \colon g\in G}$. In particular, we have $O_{u^{(0)}} = \set{u^{(0)}}$, while 
$O_{u^{(q)}} = \set{u^{(q)}, u^{(-q)}}$ for $q\neq0$.

\item   The \emph{stabiliser} of a point $u\in(\R/\Z)^\Lambda$ is defined as the set $G_u = \set{g\in G \colon g(u) = u}$. It is a subgroup of $G$. In particular, we have $G_{u^{(0)}} = G$, while  for $q\neq0$, $G_{u^{(q)}} = \set{\id}$.
\end{itemize}
Here, we are concerned with transitions between the orbit $O_{u^{(q+1)}}$, and the union of the 
orbits from $O_{u^{(0)}}$ to $O_{u^{(q)}}$. For continuous-time Markov chains, \cite[Theorem~3.2]{BD15} states that an Eyring--Kramers law still holds in the symmetric case, but with an 
extra factor 
\begin{equation}
 \frac{\#(G_{u^{(q+1)}}\cap G_{u^{(q)}})}{\#(G_{u^{(q+1)}})}\;.
\end{equation} 
However, in our case, we have $\#(G_{u^{(q+1)}}\cap G_{u^{(q)}})= \#(G_{u^{(q+1)}}) = 1$, 
so that there is no change to the Eyring--Kramers formula. The same holds true for 
diffusions, as discussed in~\cite[Sections 5.2 and 5.3]{Dutercq_thesis}. 

{It remains to specialise~\eqref{eq:EK} to the situation at hand. The form~\eqref{eq:Hasympt} 
of the Arrhenius exponent follows directly from~\eqref{e:Hdiff1}. As for the prefactor, 
using~\eqref{eq:prefactor}, \eqref{e:cosratio} and \eqref{e:nulam0ratio} 
and expanding in powers of $n^{-1}$, we find that it becomes
\begin{align}
 C(q,n) &= \frac{1}{n} \frac{2\pi}{\abs{\mu_1}} \sqrt{\frac{\abs{\mu_1}\mu_2\dots\mu_{n-1}}
 {\lambda_1\dots\lambda_{n-1}}} \\
 &= \frac{1}{n} \frac{2\pi}{\abs{\mu_1}} 
 \Biggpar{\frac{\cos\bigpar{\frac{2\pi\hat q}{n}}}{\cos\bigpar{\frac{2\pi(q+1)}{n}}}}^{n/2}
 \sqrt{\frac{\abs{\nu_1}\nu_2\dots\nu_{n-1}}{\lambda^0_1\dots\lambda^0_{n-1}}} \\
 &= \frac{1}{n} \frac{2\pi}{\abs{\mu_1}}
 \Biggpar{1 + \frac{\pi^2(4q+3)}{4n} + \Order{n^{-2}}}
 \Biggpar{1 - \frac{1}{n} + \Order{n^{-2}}} \\
 &= \frac{1}{n} \frac{2\pi}{\abs{\mu_1}}
 \Biggpar{1 + \frac{\pi^2(4q+3)-4}{4n} + \Order{n^{-2}}}\;.
\end{align}
Since \eqref{eq:prefactor} also implies $\abs{\mu_1} = 2\pi K\abs{\nu_1}(1+\Order{n^{-2}}$, 
the expression~\eqref{e:Casympt} for $C(q,n)$ follows from the bounds~\eqref{e:nu1}
on $\nu_1$ (note that the exponentially small error there is negligible with respect 
to any power of $n^{-1}$).
}
\end{proof}

\begin{remark} \hfill
\begin{enumerate}
\item Figure \ref{fig:asympt} presents numerical verification of the asymptotics in 
\eqref{e:Casympt} and \eqref{eq:Hasympt}.

\item   The above computations would allow us to determine the next-to-leading 
order of $C(q,n)$.

\item   We do not control how $R_n(\eps)$ depends on $n$. In principle, as 
$n$ gets larger, the convergence of $R_n(\eps)$ to $0$ may become slower. 
However, arguments used in~\cite{BG2013} in a somewhat similar situation suggest 
that this is not the case.
\end{enumerate}
\end{remark}
{%
\begin{remark}
\label{rem:concentration}  
As pointed out in Section~\ref{sec:stoch}, it is known that the  
renormalised variable 
\begin{equation}
\label{eq:tauhat} 
\hat{\tau}_{\cM_q} = \frac{\tau_{\cM_q}}{\expecin{u^{(q+1)}}{\tau_{\cM_q}}} 
\end{equation} 
converges in distribution to an exponential variable of parameter $1$, 
cf.~\eqref{eq:Day}. Since 
\begin{equation}
 \expecin{u^{(q+1)}}{\log(\hat\tau_{\cM_q})}
 = \int_0^\infty \probin{u^{(q+1)}}{\log(\hat\tau_{\cM_q}) > t} \6t 
 = \int_0^\infty \probin{u^{(q+1)}}{\hat\tau_{\cM_q} > \e^t} \6t\;,  
\end{equation} 
it follows from~\eqref{eq:Day} that
\begin{equation}
 \lim_{\eps\to0} \expecin{u^{(q+1)}}{\log(\hat\tau_{\cM_q})} 
 = \int_0^\infty \e^{-\e^t} \6t 
 = \int_1^\infty \frac{\e^{-z}}{z}\6z 
 = \Gamma(0,1)\;.
\end{equation} 
Using~\eqref{eq:tauhat}, one readily obtains 
\begin{equation}
 \lim_{\eps\to0} \eps \expecin{u^{(q+1)}}{\log(\tau_{\cM_q})}
 = H_{q+1}\;.
\end{equation} 
Furthermore, a similar computation shows that the variance of $\log(\hat\tau_{\cM_q})$
converges to a positive quantity of order $1$ as $\eps\to0$. Therefore, the 
variance of $\eps\log(\tau_{\cM_q})$ has order $\eps^2$, showing that this random 
variable converges in probability to its expectation $H_{q+1}$ as $\eps\to0$. 
\end{remark}
}


\subsection{More general transitions}
\label{ssec:EK_general} 

In this section, we explain how to compute the expected transition time 
between more general states, without giving detailed proofs. The basic idea is that the 
dynamics should be well-approximated by a continuous-time Markov chain on the set of stable 
$q$-twisted states, see Figure~\ref{fig:Markov}, with transition rates given by 
\begin{equation}
 p_{q,q+1} = \frac{1}{C_{q,q+1}} \e^{-\bar H_q/\eps}\;, 
 \qquad 
 p_{q+1,q} = \frac{1}{C_{q+1,q}} \e^{-H_q/\eps}
\end{equation} 
for $0 \leqs q < \frac n4$, while $p_{-q,-q+1} = p_{q,q+1}$, 
$p_{-q+1,-q} = p_{q+1,q}$, and $p_{q,\bar q} = 0$ if $\abs{\bar q - q} \geqs2$. 
Here the coefficients $C_{q,\bar q}$ denote the prefactors in the associated 
Eyring--Kramers laws. 

The validity of such a reduction has been analysed in~\cite{Rezakhanlou_Seo_2021scaling,Seo2020} for diffusions, and in~\cite{B2023}
for continuous-space Markov chains. To justify this approximation in our situation, 
we would need to show that the most likely sinks that can be reached from a saddle 
$u^{(q+1/2)}$ are $u^{(q)}$ and $u^{(q+1)}$, in the sense that the unstable manifolds 
of the saddle converge to these two sinks. While this is indeed the case for 
$u^{(q+1)}$, we have not given a full proof of the fact that $u^{(q)}$ is indeed 
the other sink that the unstable manifold connects to. 

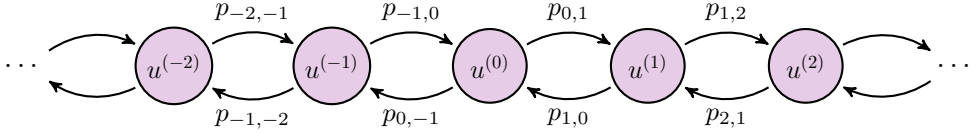
\begin{figure}[t]
\begin{center}
\begin{tikzpicture}[->,>=stealth',shorten >=2pt,shorten <=2pt,auto,node
distance=3.0cm, thick,main node/.style={circle,scale=0.7,minimum size=1.4cm,
fill=violet!20,draw,font=\sffamily\Large}]

  \node[main node] (0) {$u^{(0)}$};
  \node[main node] (1) [right of=0] {$u^{(1)}$};
  \node[main node] (2) [right of=1] {$u^{(2)}$};
  \node[main node] (-1) [left of=0] {$u^{(-1)}$};
  \node[main node] (-2) [left of=-1] {$u^{(-2)}$};
  \node[node distance=2cm] (3) [right of=2] {$\dots$};
  \node[node distance=2cm] (-3) [left of=-2] {$\dots$};

  \path[every node/.style={font=\sffamily\small}]
    (-3) edge [bend left,above] (-2)
    (-2) edge [bend left,above] node {$p_{-2,-1}$} (-1)
    (-1) edge [bend left,above] node {$p_{-1,0}$} (0)
    (0) edge [bend left,above] node {$p_{0,1}$} (1)
    (1) edge [bend left,above] node {$p_{1,2}$} (2)
    (2) edge [bend left,above] (3)
    (3) edge [bend left,below] (2)
    (2) edge [bend left,below] node {$p_{2,1}$} (1)
    (1) edge [bend left,below] node {$p_{1,0}$} (0)
    (0) edge [bend left,below] node {$p_{0,-1}$} (-1)
    (-1) edge [bend left,below] node {$p_{-1,-2}$} (-2)
    (-2) edge [bend left,below] (-3)
    ;
\end{tikzpicture}
\end{center}
\caption{Continuous time Markov chain approximating the dynamics of transitions 
between $q$-twisted states.}
\label{fig:Markov}
\end{figure}

{%
\begin{remark}
Strictly speaking, we have not shown that the sink that is easiest to reach 
from the $q$-twisted state $u^{(q)}$ with $q>0$ is $u^{(q-1)}$. As illustrated by 
Figure~\ref{fig:meta_hierarchy2}, any sink in the metastable set $\cM_{q-1}$ 
may in fact play this role. On the other hand, our analysis shows that it is 
indeed easier to reach the $q$-twisted state $u^{(q+1)}$ from $u^{(q)}$ 
than the $q$-twisted state $u^{(q+2)}$, since this would require crossing 
a higher $1$-saddle.
\end{remark}
\begin{remark}
The inversion symmetry implies that the $q$-twisted states $u^{(q)}$ 
and $u^{(-q)}$ play a symmetric role. One option would therefore be to investigate 
the coarse-grained Markov chain on the set $\cY=\set{0,1,\dots,m}$, in which 
each $q>0$ represents the agglomerated state $\set{u^{(-q)},u^{(q)}}$. 
For this modified chain, some transition rates will be twice as large as for the 
original chain on $\cX$. 
\end{remark}
}

Assuming the approximation is indeed justified, expected transition times can be computed as follows. Denote the Markov chain by $(X_t)_{t\geqs0}$. It takes values in $\cX = \set{-m,\dots,m}$, where $m = \max\set{q\in\N_0\colon q<\frac n4}$. Here $q\in\cX$ represents the state $u^{(q)}$.
Let $L$ denote the infinitesimal generator of the process, 
that is, the $2m+1$ by $2m+1$ matrix with entries 
\begin{equation}
 L_{q,\bar q} = 
 \begin{cases}
  p_{q,\bar q} & \text{if $\bar q \neq q$\;,} \\
  -\sum_{\bar q\neq q} p_{q,\bar q} & \text{if $\bar q = q$\;.}
 \end{cases}
\end{equation} 
For a set $A\subset\cX$, 
if 
\begin{equation}
 w_A(q) = \bigexpecin{q}{\tau_A}\;,
 \qquad 
 \tau_A = \inf\bigset{t>0 \colon X_t\in A}\;,
\end{equation} 
we have the relation 
\begin{equation}
\label{eq:Markov_expected_time} 
 \sum_{\bar q\in A^c} L_{q,\bar q} w_A(q) = -1\;.
\end{equation} 
For a proof, see for instance~\cite[Chapter 3]{Norris}. 

We give a few examples of applications of~\eqref{eq:Markov_expected_time}.

\begin{enumerate}
\item  If $A^c = \set{0}$, then $\tau_A$ denotes the first-hitting time of any 
sink different from $u^{(0)}$. In that case, we obtain
\begin{equation}
 \bigexpecin{0}{\tau_A}
 = \frac{1}{p_{0,1} + p_{0,-1}}
 = \frac{1}{2p_{0,1}}
 = \frac12 C_{01} \e^{\bar H_0/\eps}\;.
\end{equation} 
Here the factor $\frac12$ is due to the fact that there is an equal probability 
to escape towards positive and negative $q$.

\item   If $A^c = \set{q}$ for some $q\neq0$, say $q>0$, then $\tau_A$ denotes 
again the first-hitting time of any sink different from $u^{(q)}$. Then we find
\begin{equation}
 \bigexpecin{q}{\tau_A}
 = \frac{1}{p_{q,q+1} + p_{q,q-1}}
 = \frac{C_{q,q-1} \e^{H_q/\eps}}
 {1 + \frac{C_{q,q-1}}{C_{q,q+1}} \e^{-[\bar H_q - H_q]/\eps}}\;.
\end{equation} 
It follows from Lemma~\ref{lem:Hq} that 
\begin{equation}
 \bar H_q - H_q = \frac{K\pi}{2n} + \mathcal{O}\bigpar{n^{-2}}\;.
\end{equation} 
Therefore, for finite $n$, the expectation is dominated by transitions to 
$u^{(q-1)}$, and hence $\bigexpecin{q}{\tau_A} \simeq C_{q,q-1} \e^{H_q/\eps}$.

\item   As a more complicated example, let us look at the case where 
$A^c = \set{-1,0,1}$. Then we have to solve the system 
\begin{equation}
 \begin{pmatrix}
  -(L_{-1,-2} + L_{-1,0}) & L_{-1,0} & 0 \\
  L_{0,-1} & -(L_{0,-1}+L_{0,1}) & L_{0,1} \\
  0 & L_{1,0} & -(L_{1,0}+L_{1,2}) 
 \end{pmatrix}
 \begin{pmatrix}
  w_A(-1) \\ w_A(0) \\ w_A(1)  
 \end{pmatrix}
= 
\begin{pmatrix}
 -1 \\ -1 \\ -1
\end{pmatrix}\;.
\end{equation} 
The symmetry $L_{q,\bar q} = L_{-q,-\bar q}$ implies $w_A(-1) = w_A(1)$. 
This yields an effective two-dimensional system, whose solution is 
\begin{align}
\bigexpecin{0}{\tau_A} = 
 w_A(0) &= 
 \frac{2L_{0,1} + L_{1,0} + L_{1,2}}{2L_{0,1}L_{1,2}}\;,\\
\bigexpecin{1}{\tau_A} = 
 w_A(1) &= 
 \frac{2L_{0,1} + L_{1,0}}{2L_{0,1}L_{1,2}}\;.
\end{align}
For finite $n$, we have $\bar H_1 > \bar H_0 > H_1$, which implies 
$L_{1,2} \ll L_{0,1} \ll L_{1,0}$. This implies that there exists a constant $\theta>0$,
of order $n^{-1}$, such that 
\begin{align}
 \bigexpecin{0}{\tau_A} 
 &= \frac{C_{0,1}C_{1,2}}{2C_{1,0}}
 \e^{[\bar H_0 - H_1 + \bar H_2]/\eps} \bigbrak{1+\mathcal{O}(\e^{-\theta/\eps})}\;, \\
 \bigexpecin{1}{\tau_A} 
 &= \frac{C_{0,1}C_{1,2}}{2C_{1,0}}
 \e^{[\bar H_0 - H_1 + \bar H_2]/\eps} \bigbrak{1+\mathcal{O}(\e^{-\theta/\eps})}\;.
\end{align} 
In other terms, the expected first-hitting time does not depend on the starting point 
to leading order. What happens is that if the system starts in state $u^{(1)}$, 
it will, with overwhelming probability, visit state $u^{(0)}$ before any other state. 
Note that the exponent $\bar H_0 - H_1 + \bar H_2$ is equal to the potential 
difference $V(u^{(3/2)}) - V(u^{(0)})$, which is precisely the relative communication
height between the states $u^{(0)}$ and $u^{(2)}$.
\end{enumerate}


\section{Numerical simulations} 
\label{sec:numerical}

In this section, we provide numerical simulations illustrating our main results, 
as well as some extensions to more general couplings. Source code is available at \url{https://github.com/gideonsimpson/KuramotoMetastability}; data is available upon request.

\begin{figure}[!t]
    \centering
    \subfigure[$q=0$]{\includegraphics[width=6.5cm]{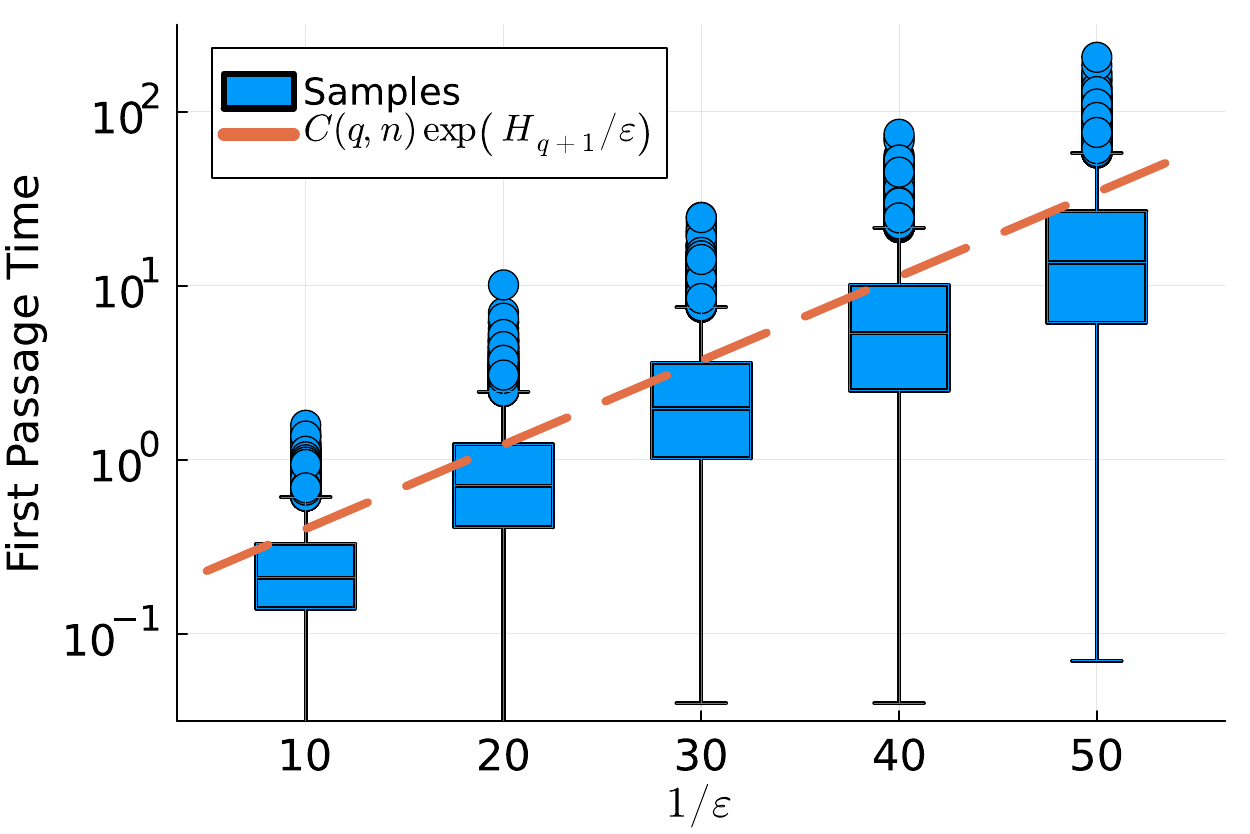}}
    \subfigure[$q=1$]{\includegraphics[width=6.5cm]{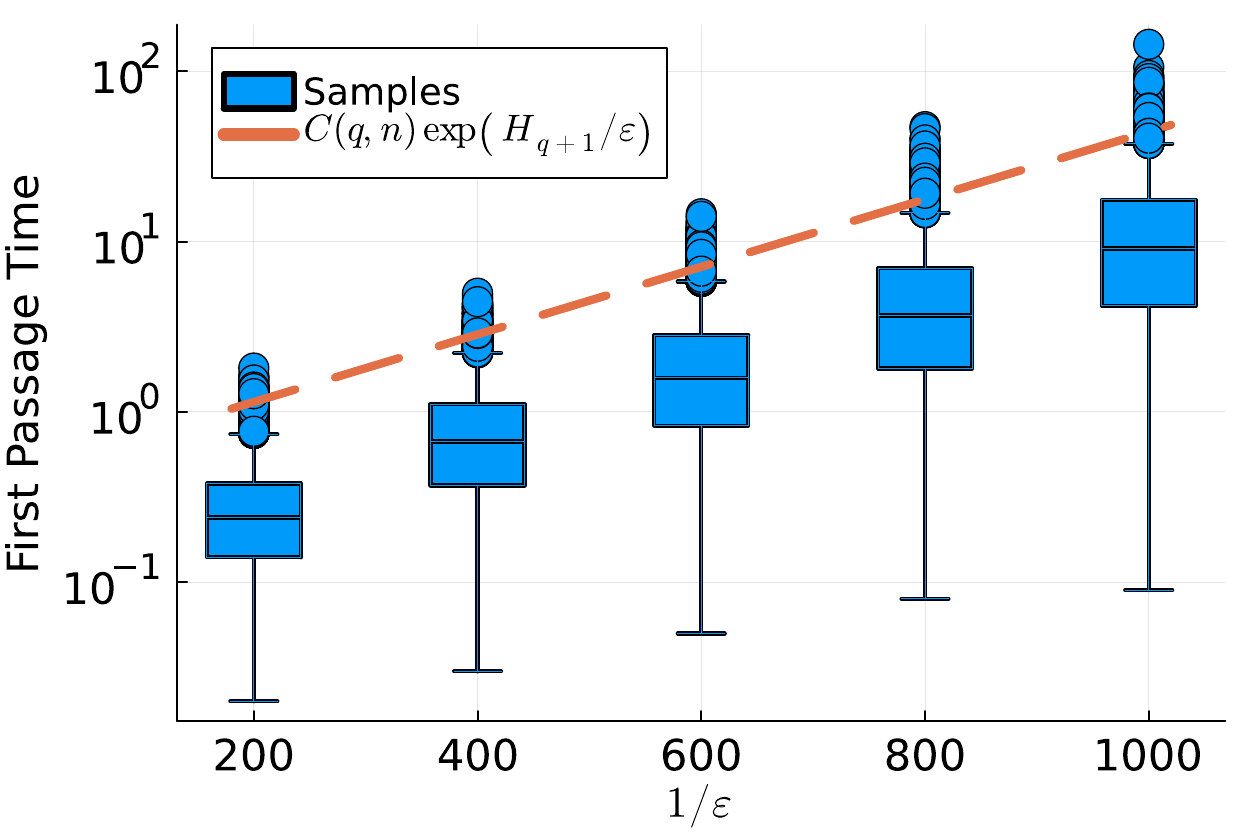}}

    \caption{Empirical first passage time distributions for the $n=10$ system, compared against the Eyring--Kramers formula \eqref{eq:EK}.  For each value of $\eps$, $10^4$ independent trials were performed. {Units are dimensionless with $K=1$.}}
    \label{fig:fptn10}
\end{figure}

\begin{figure}[!t]
    \centering
    \subfigure[$q=0$]{\includegraphics[width=6.5cm]{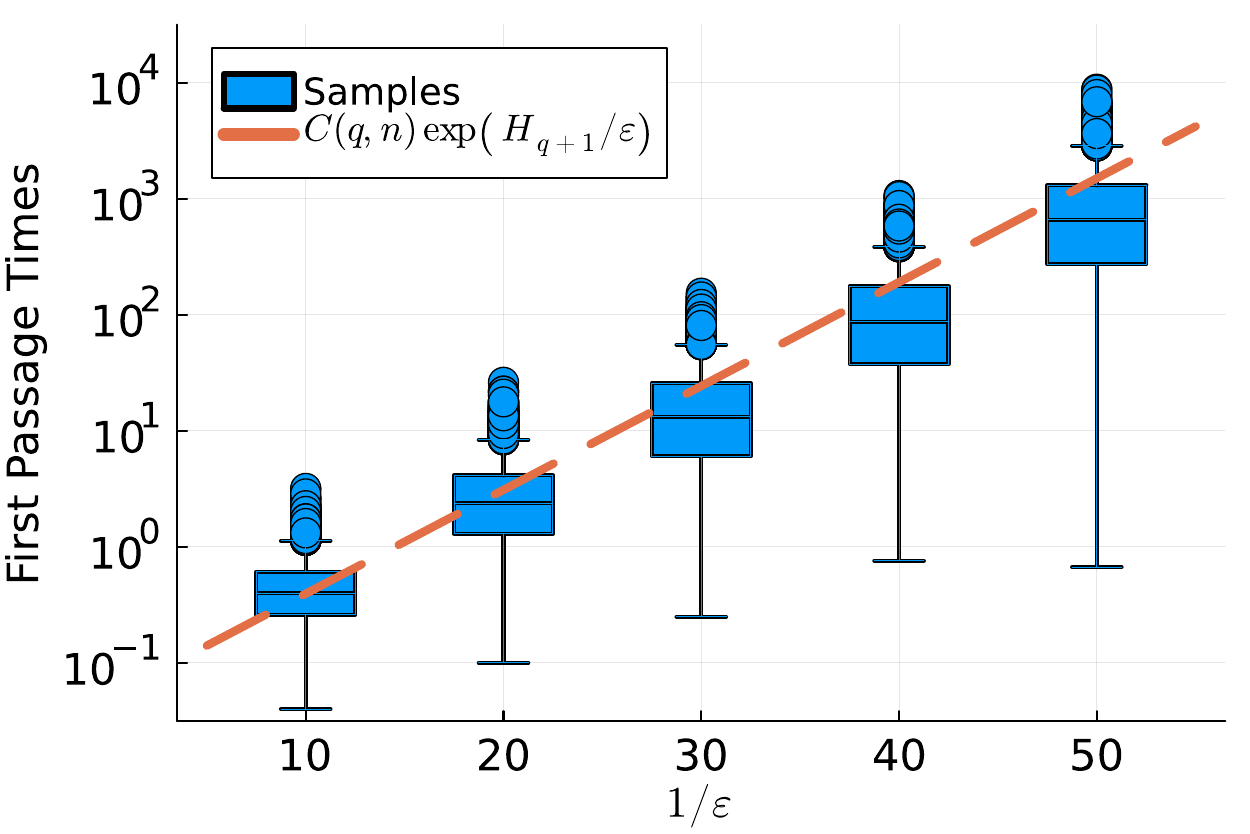}}
    \subfigure[$q=1$]{\includegraphics[width=6.5cm]{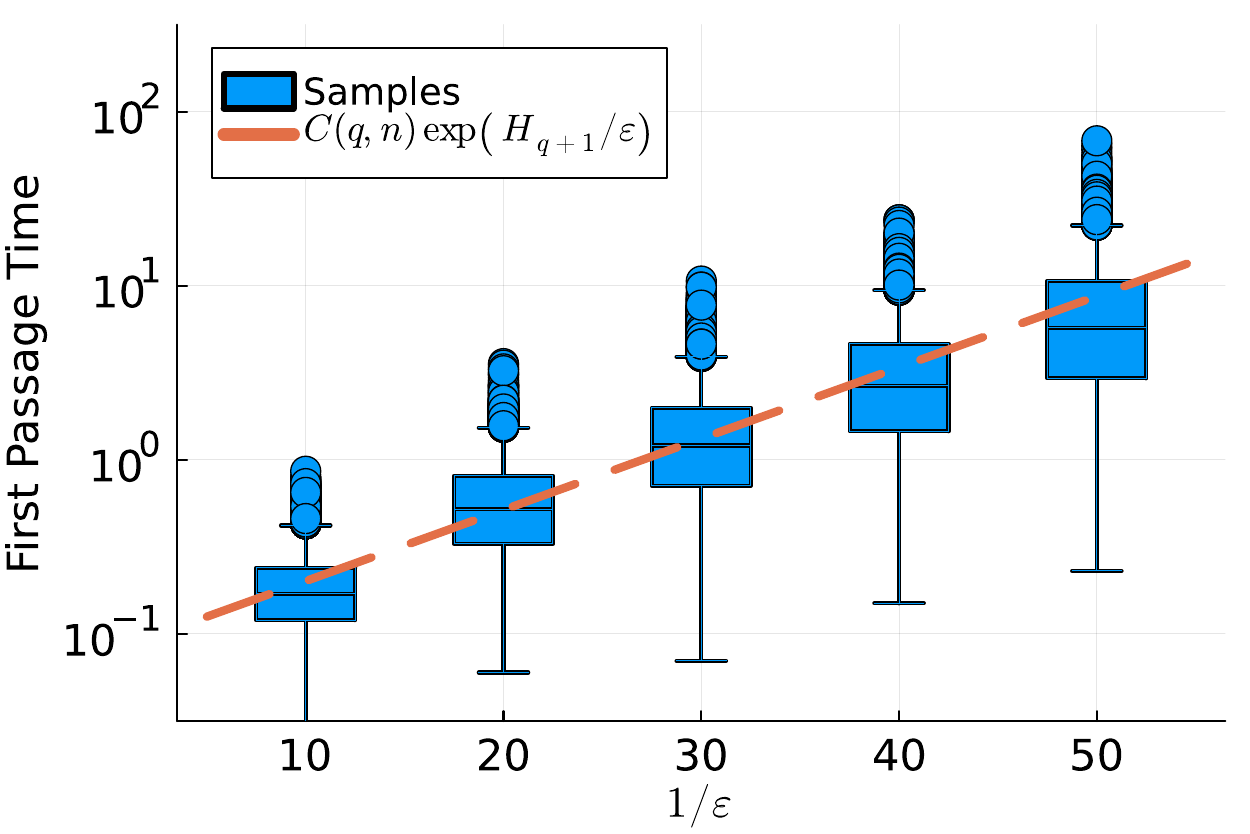}}

    \subfigure[$q=2$]{\includegraphics[width=6.5cm]{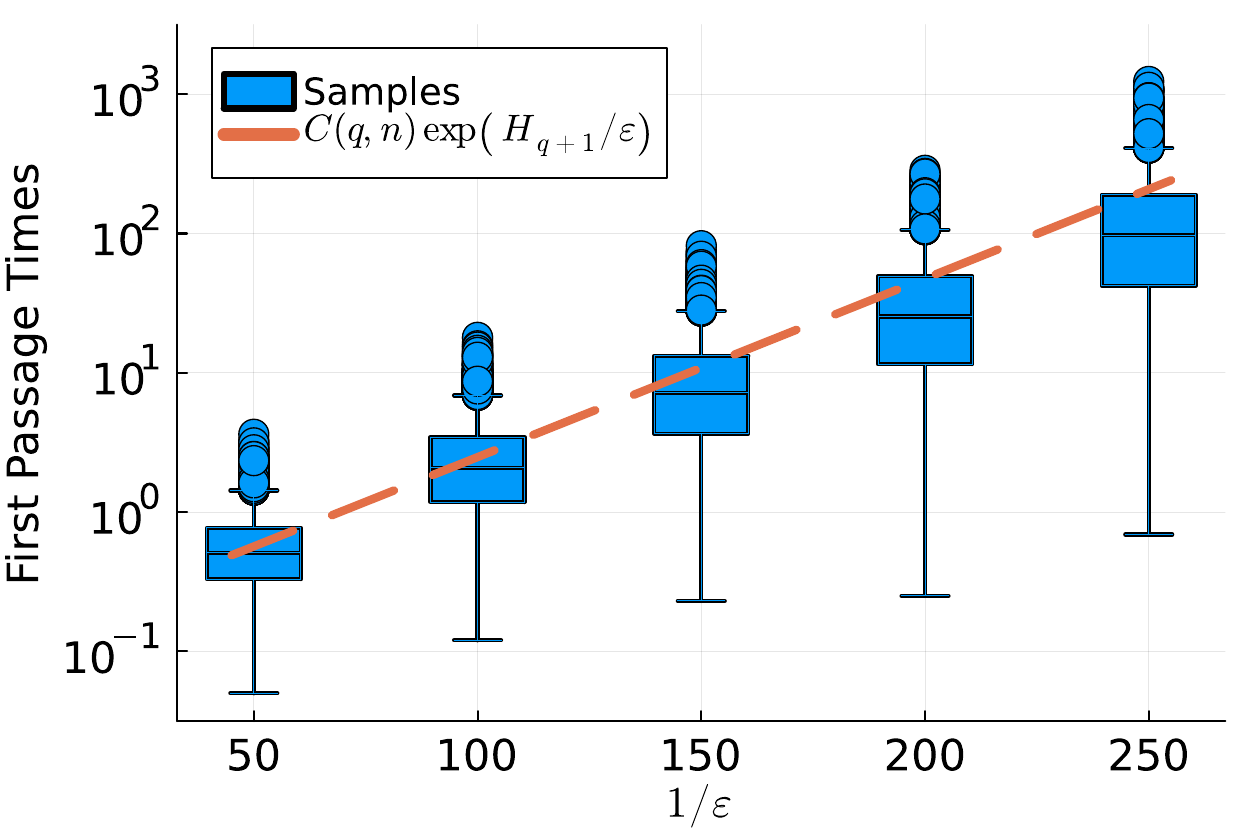}}
    \subfigure[$q=3$]{\includegraphics[width=6.5cm]{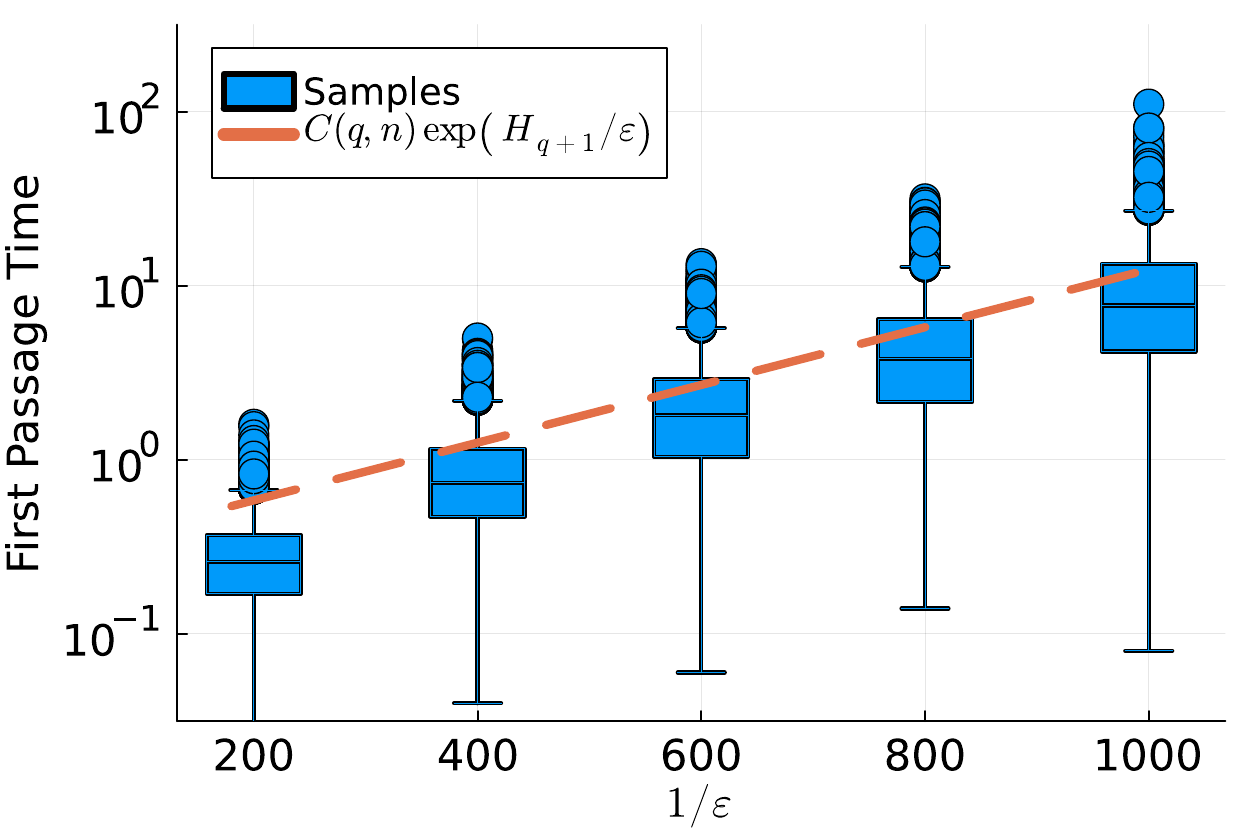}}
    
    \caption{Empirical first passage time distributions for the $n=20$ system, compared against the Eyring--Kramers formula \eqref{eq:EK}. For each value of $\eps$, $10^4$ independent trials were performed. {Units are dimensionless with $K=1$.}}
    \label{fig:fptn20}
\end{figure}

\subsection{Eyring--Kramers asymptotics for the Kuramoto model}

Our first numerical result verifies the Eyring--Kramers law \eqref{eq:EK} for the Kuramoto problem.  This is done directly by running independent trials of \eqref{eq:KM-forced}, with initial conditions given by one of the twisted states.  As shown in Figures \ref{fig:fptn10}, \ref{fig:fptn20}, and \ref{fig:fptn40}, we have good agreement between the data and the law.  Though some cases have closer agreement of the sample means than others, the trends are consistent across all cases and the error is well under an order of magnitude.

We note that we are comparing against the {\it exact} Eyring--Kramers formula, and not the asymptotic expansion obtained in Theorem \ref{thm:main}. In each problem, $10^4$ independent trials were performed with $\Delta t = 10^{-2}$ using Euler--Maruyama time stepping (see \cite{MS2023} for additional details).  Detection of an escape from the basin of attraction was performed by using a BFGS routine (see \cite{heath_scientific_2018,nocedal_numerical_2006})  to minimize the energy $U$ at the current state of the system.  Values of $\eps$ were chosen for each problem such that we had
\begin{equation}
1 \lesssim H_{q+1}/\eps \lesssim 10.
\end{equation}
This makes the problems sufficiently low temperature such that the system is entering the Arrhenius regime, but not so low as to require advanced rare event simulation techniques.

\begin{figure}[!t]
    \centering
    \subfigure[$q=0$]{\includegraphics[width=6.5cm]{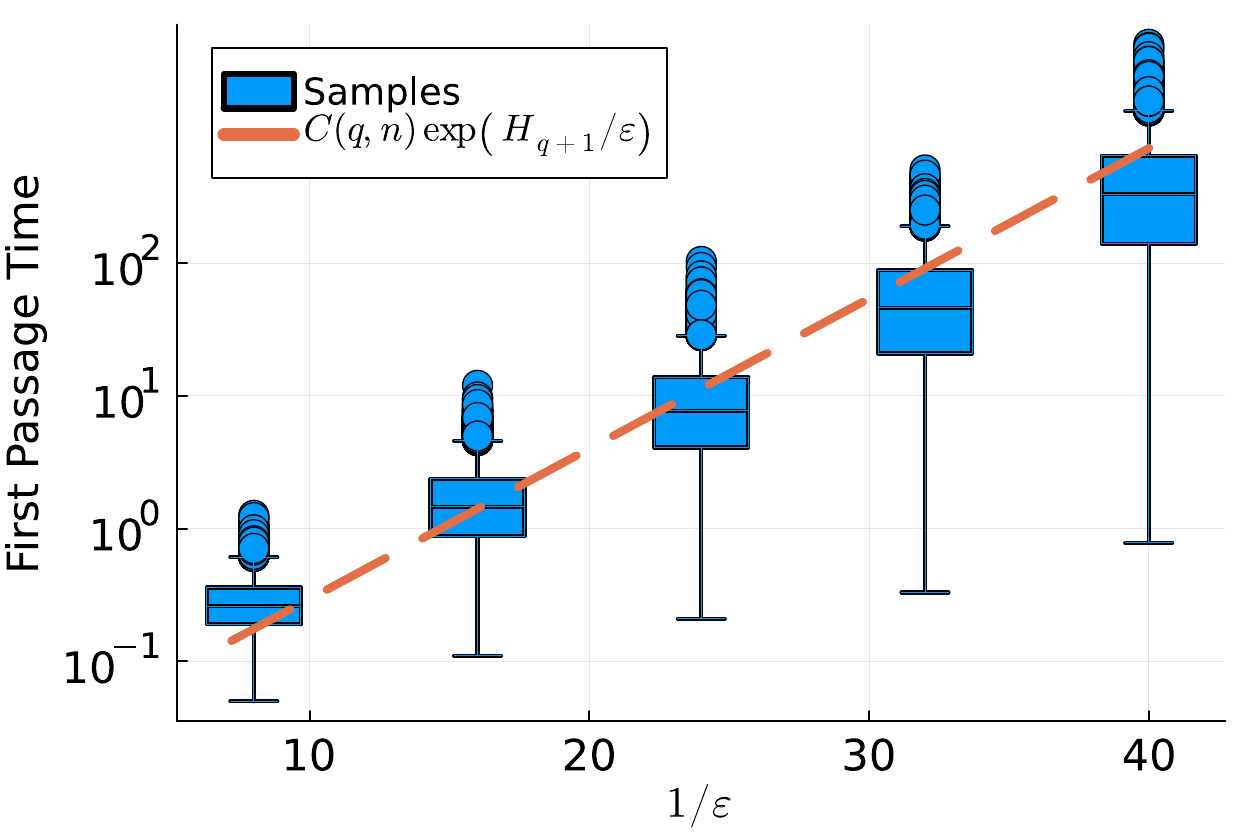}}
    \subfigure[$q=1$]{\includegraphics[width=6.5cm]{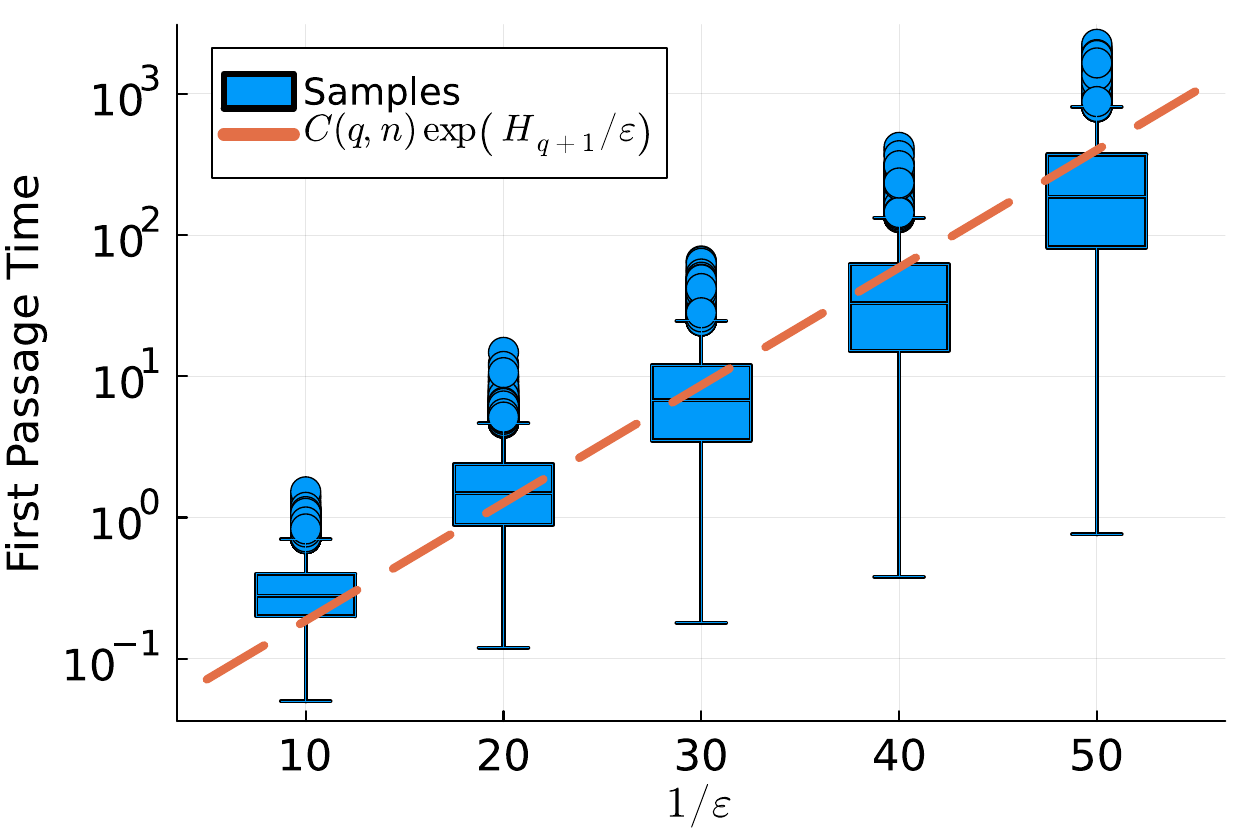}}

    \subfigure[$q=3$]{\includegraphics[width=6.5cm]{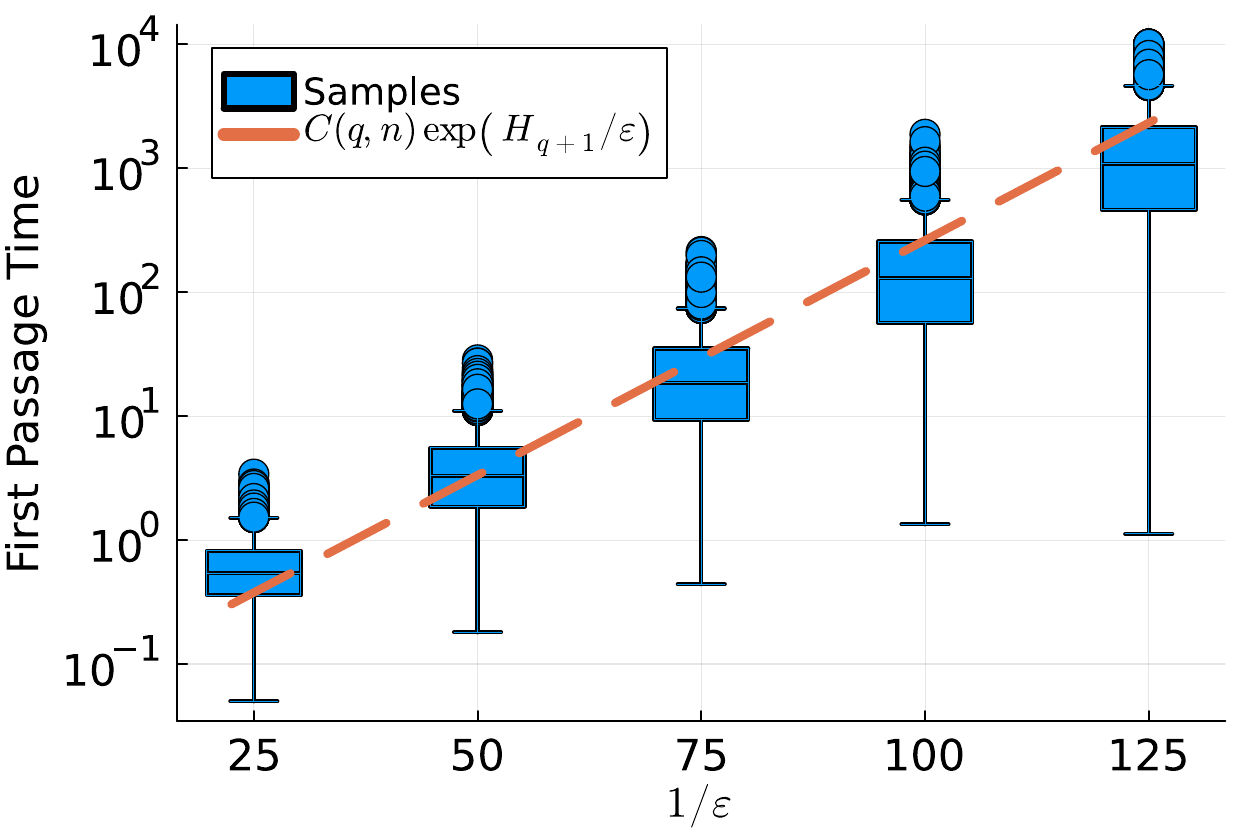}}
    \subfigure[$q=7$]{\includegraphics[width=6.5cm]{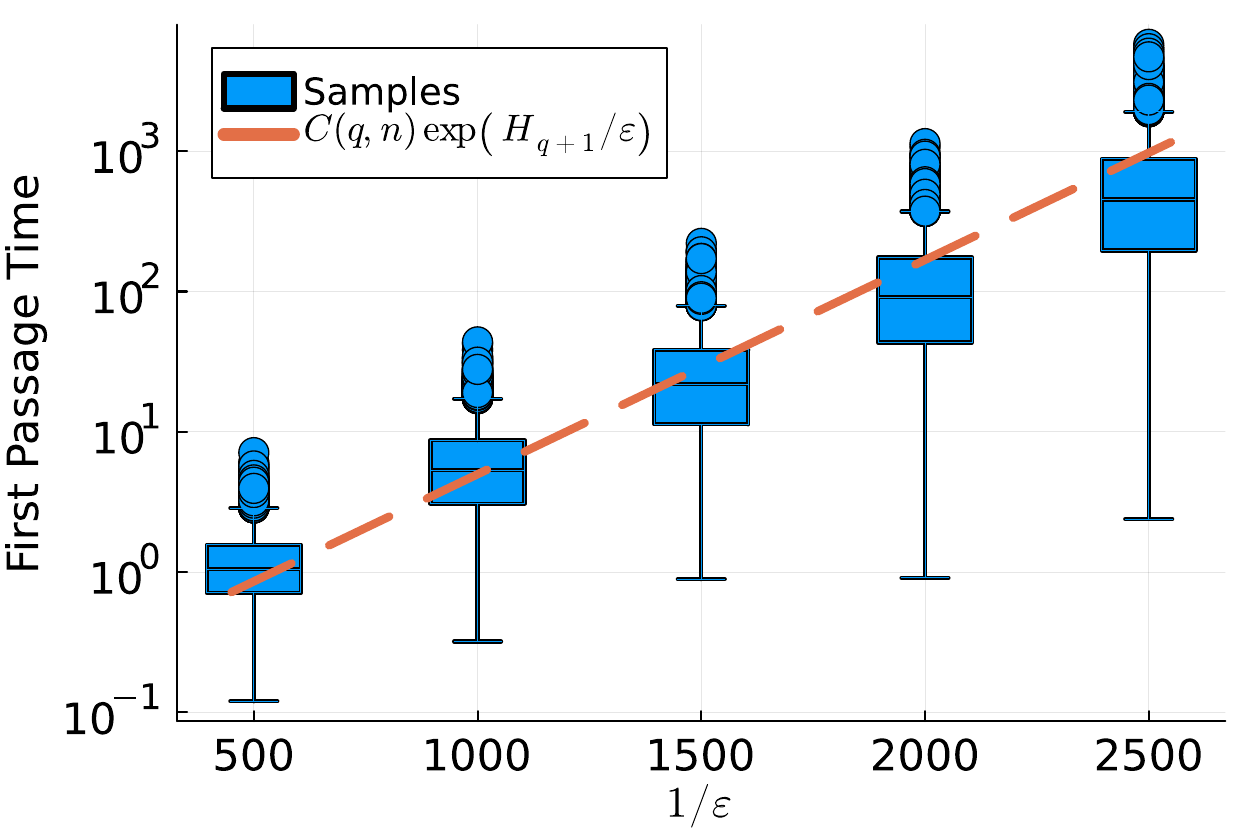}}
    
    \caption{Empirical first passage time distributions for the $n=40$ system, compared against the Eyring--Kramers formula \eqref{eq:EK}. For each value of $\eps$, $10^4$ independent trials were performed. {Units are dimensionless with $K=1$.}}
    \label{fig:fptn40}
\end{figure}


\subsection{Sharp asymptotics of the prefactor and the energy barrier}

\begin{figure}[!t]
\centering 
\subfigure[$q=0$]{\includegraphics[width=6.5cm]{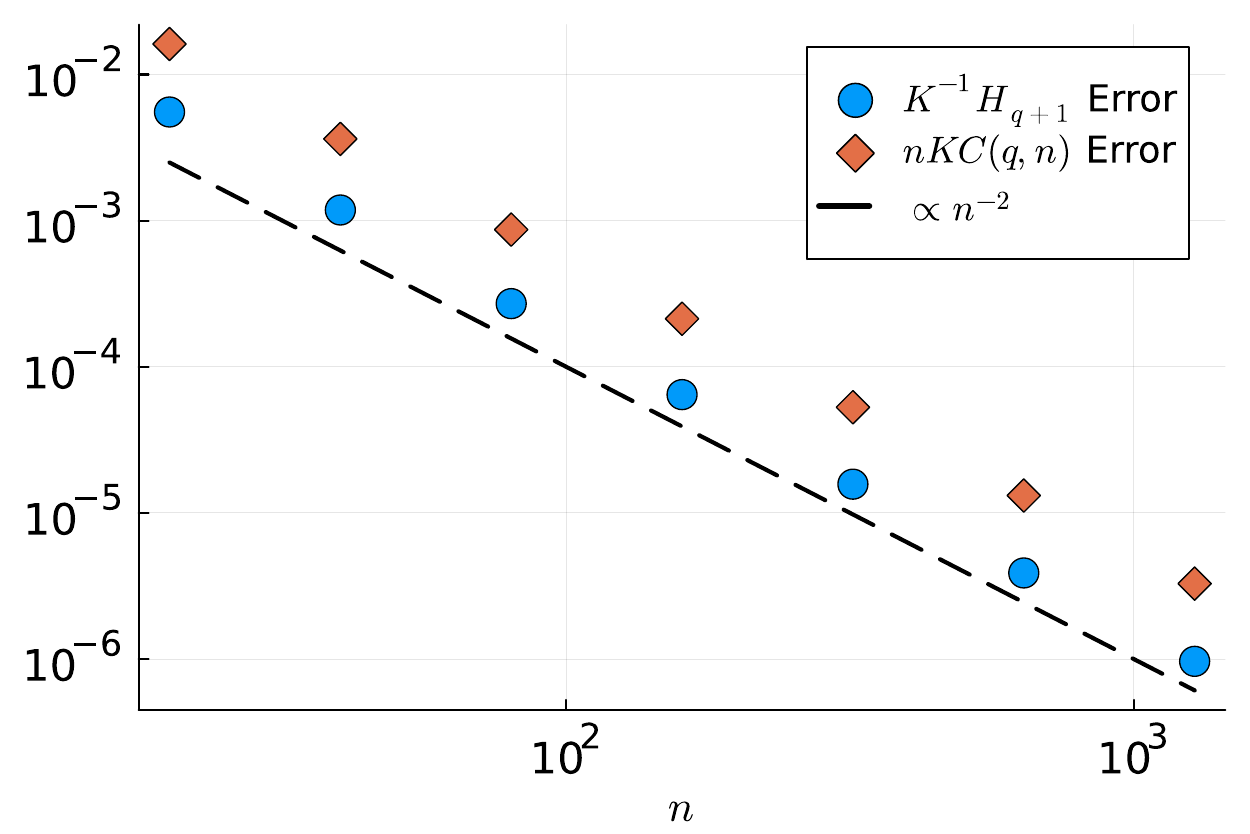}}
\subfigure[$q=1$]{\includegraphics[width=6.5cm]{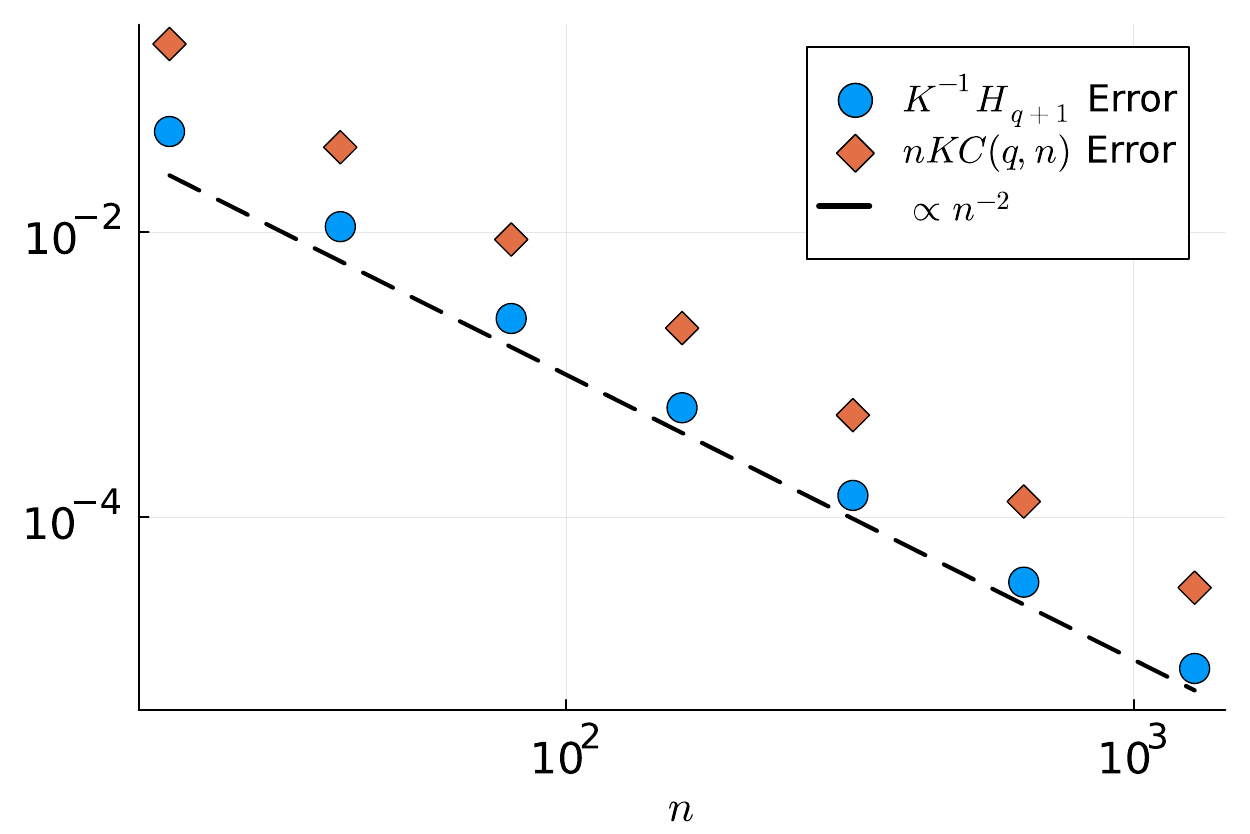}}

\subfigure[$q=2$]{\includegraphics[width=6.5cm]{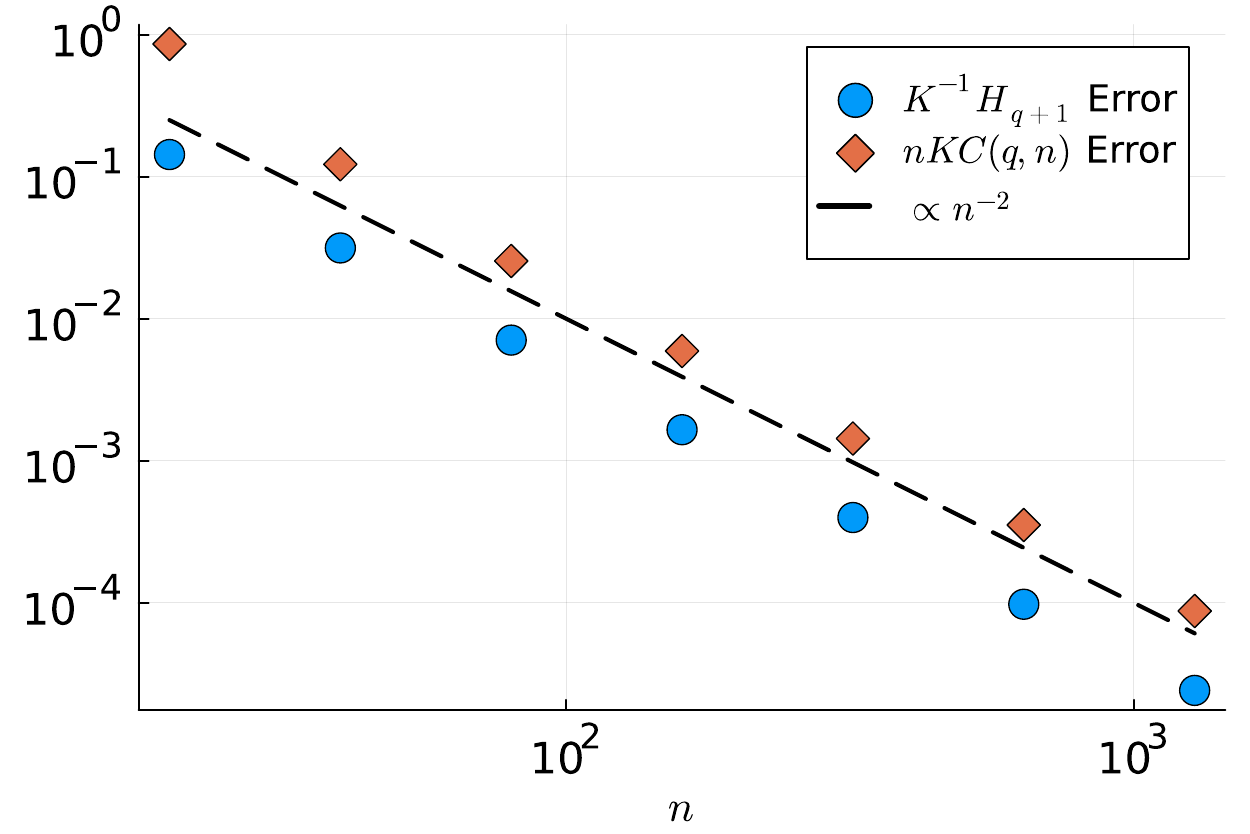}}
\subfigure[$q=3$]{\includegraphics[width=6.5cm]{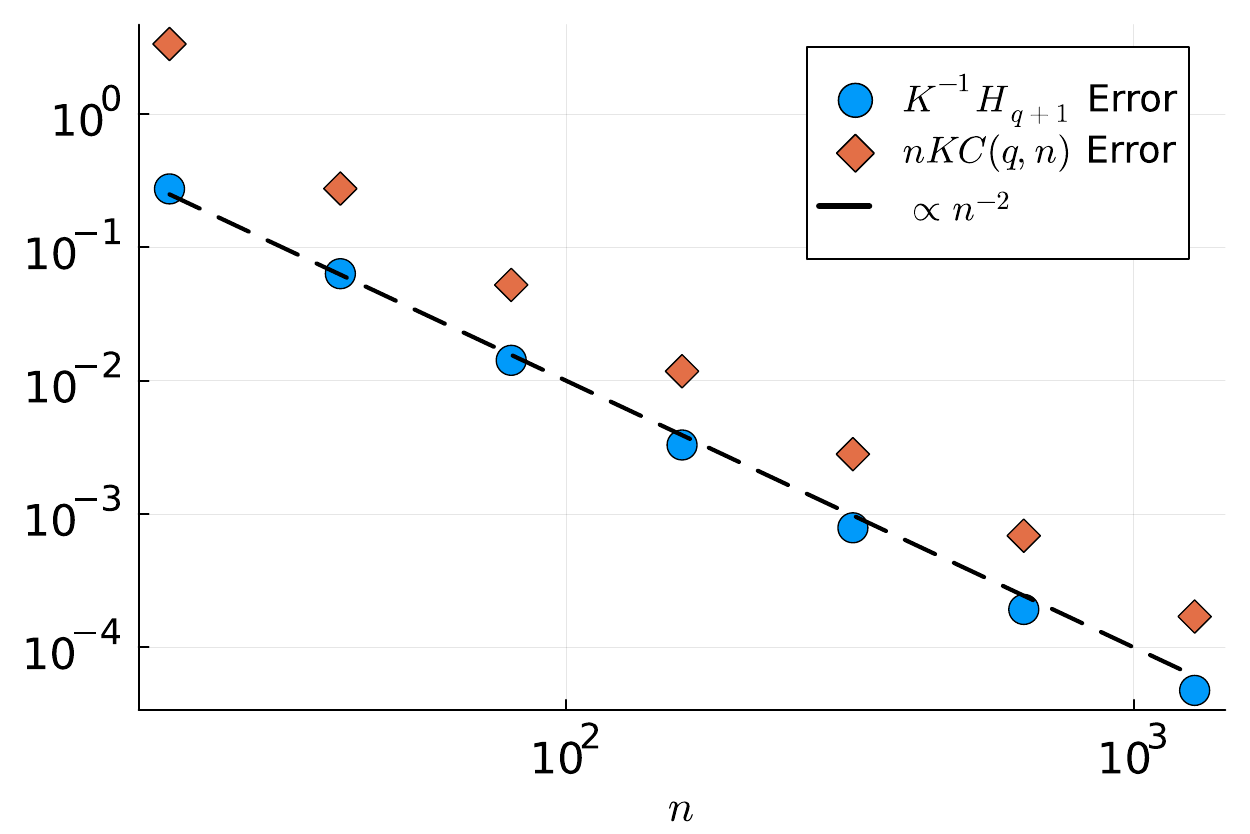}}

    \caption{Confirmation that, after rescaling, the constants $C(q,n)$ and $H_{q+1}$ obey the predictions of \eqref{e:Casympt} and \eqref{eq:Hasympt} in the large $n$ limit.}
    \label{fig:asympt}
\end{figure}

Next, we confirm the predictions of Theorem \ref{thm:main} by numerically computing the prefactor $C(q,n)$ and energy barrier $H_{q+1}$, then comparing them against the asymptotic formulae.  As shown in Figure \ref{fig:asympt}, subject to a rescaling, we have the anticipated behavior for $q=0,1,2,3$ across a broad range of $n$.  These were computed by direct evaluation of the energy and the eigenvalues of the Hessian for the relevant states.

\subsection{Beyond nearest neighbor interaction}
\label{ssec:num_beyond_nn} 

\begin{figure}[!t]
    \centering
    \subfigure[$r=2$]{\includegraphics[width = 6.5cm]{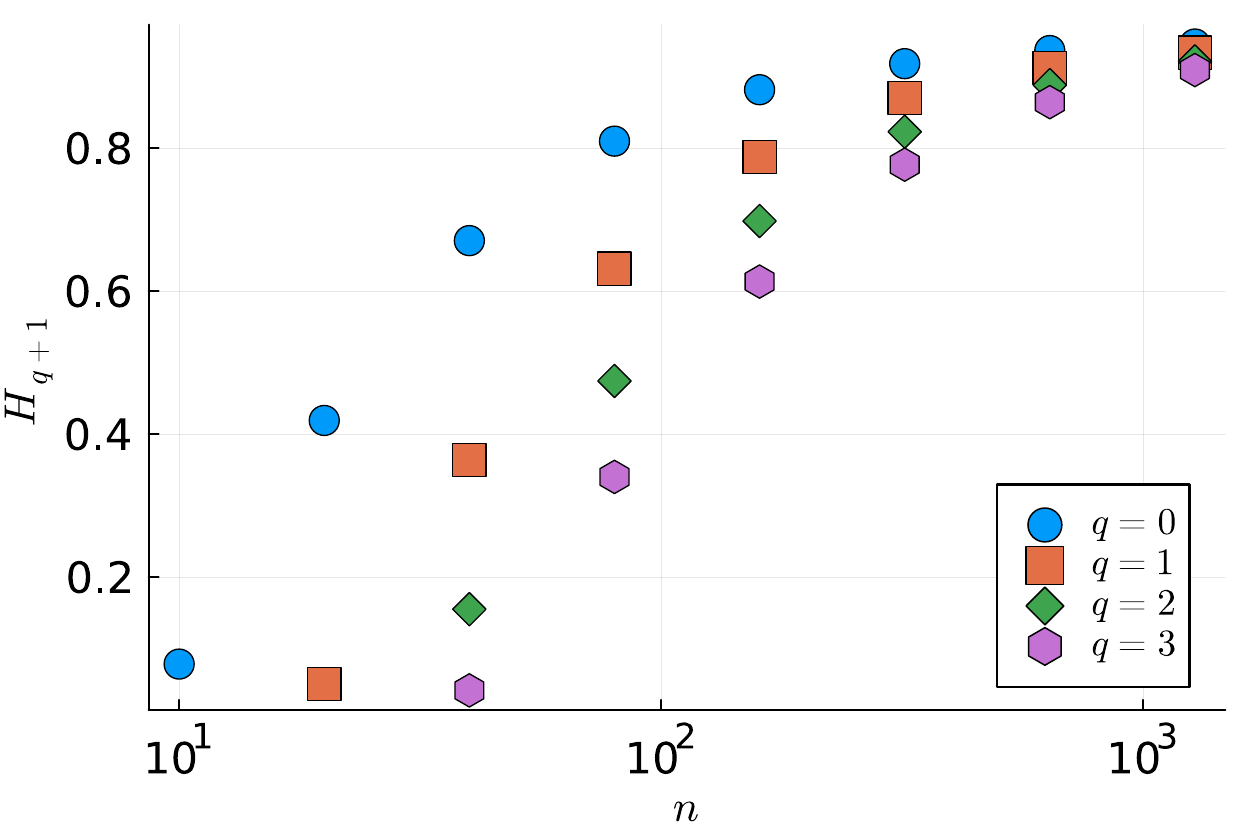}}
    \subfigure[$r=2$]{\includegraphics[width = 6.5cm]{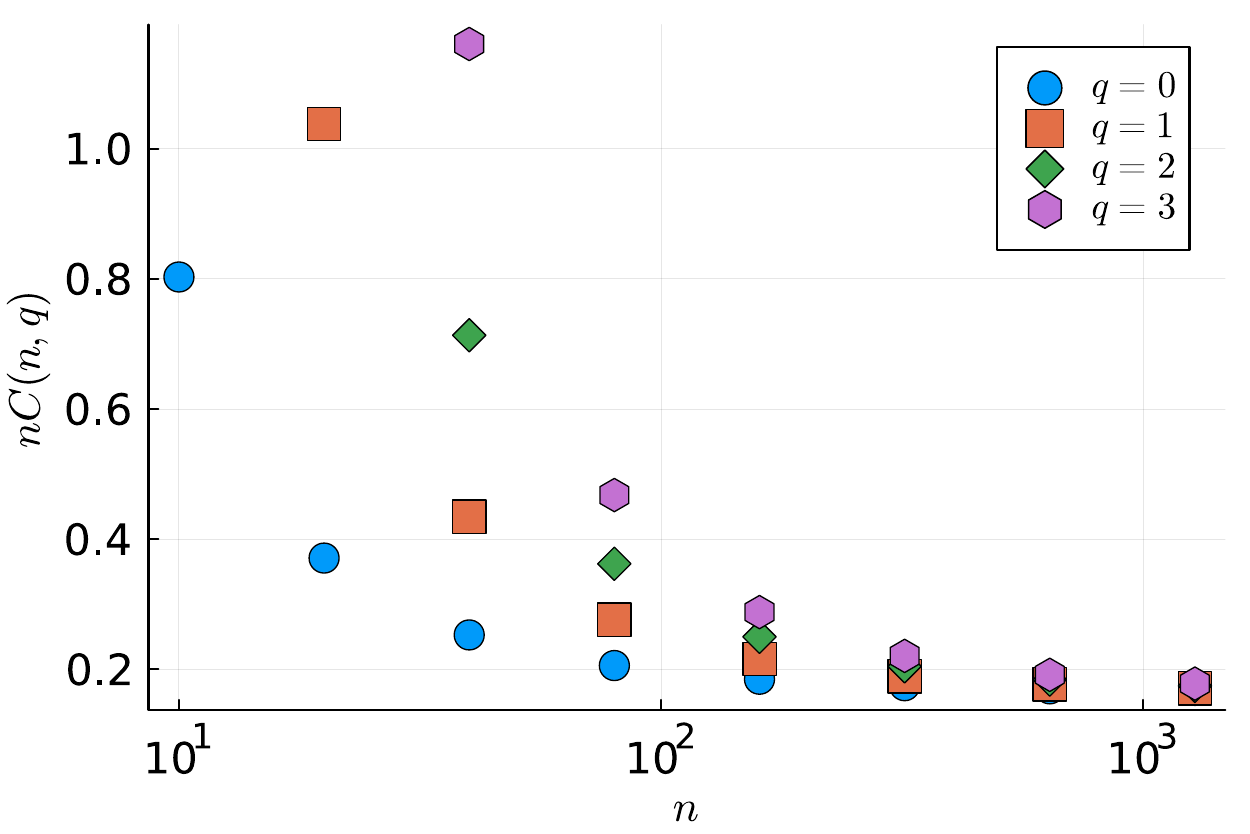}}

    \subfigure[$r=3$]{\includegraphics[width = 6.5cm]{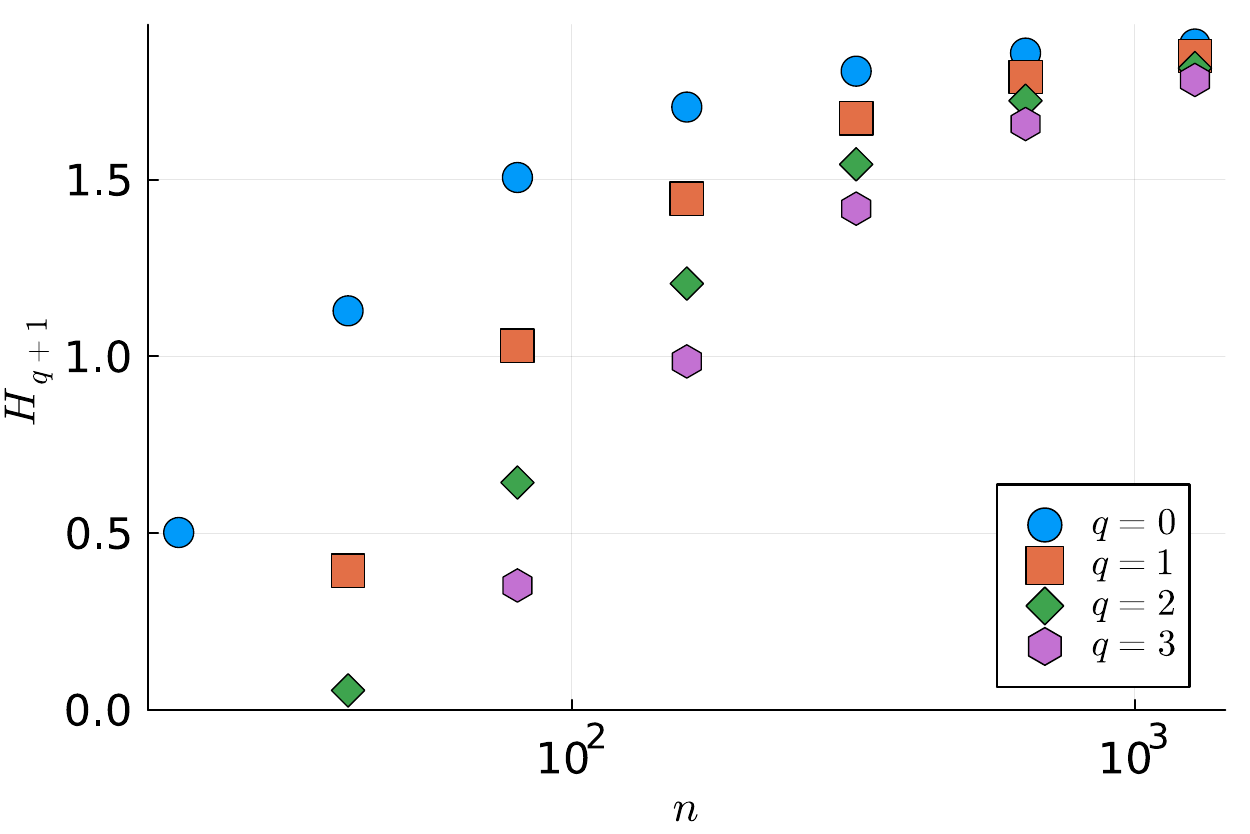}}
    \subfigure[$r=3$]{\includegraphics[width = 6.5cm]{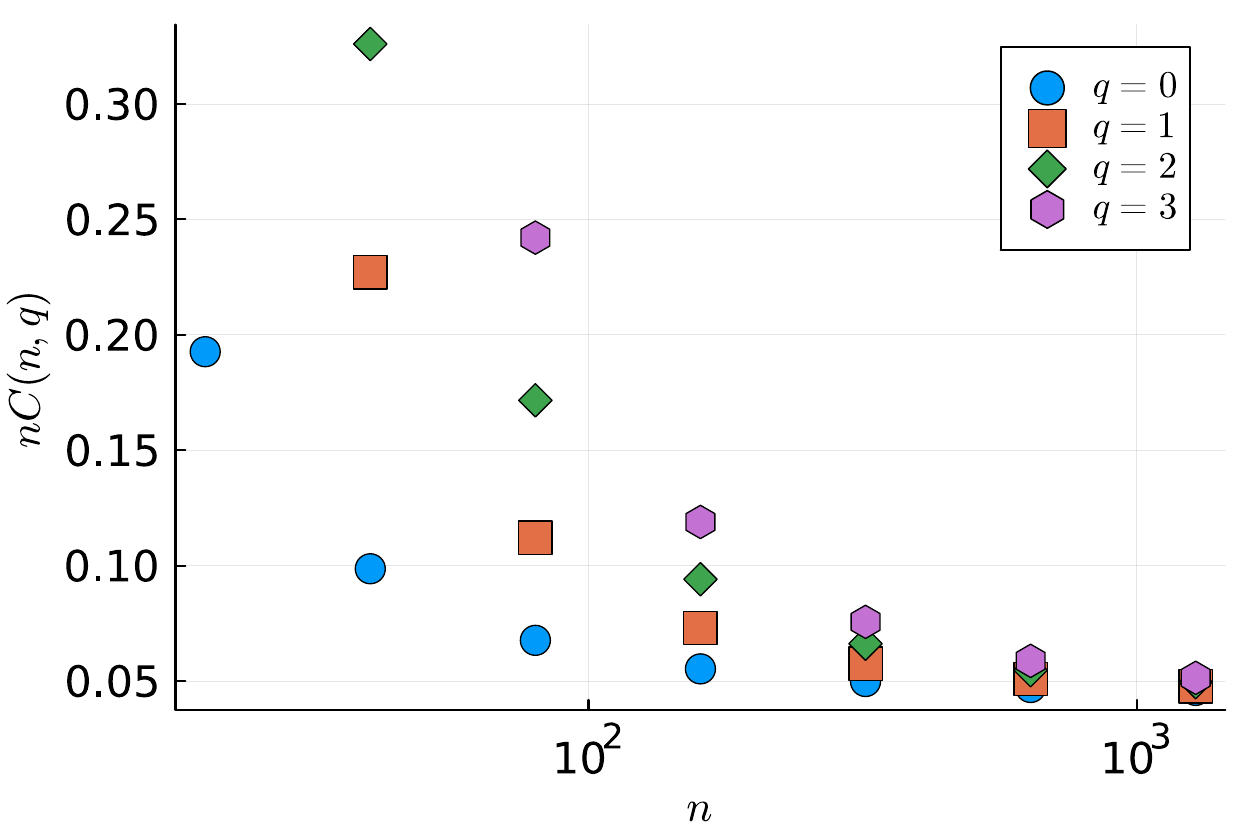}}

        \caption{Energy barriers and prefactors for the Eyring--Kramers formula in the case of $r>1$ nearest neighbors for several twisted states.}
    \label{fig:fixedknnvaryingn}
\end{figure}

Thus far, our study has been focused on the case of nearest neighbor interactions ($r=1$), but part of what makes Kuramoto models so interesting is their behavior when longer range interactions are included \cite{WilStr06, MedTan15b, Ome18, CMM2018, MM2022}. While we do not pursue a full numerical or analytic study here, we conclude with some  computations of the energy barriers and prefactors.  These were computed by finding the saddle point states using the string method and the climbing image method (cf.~\cite{weinan2007simplified}).

We consider the case of the system with $r>1$ nearest neighbors, and varying the system size, $n$, for several values of $q$.  As shown in Figure \ref{fig:fixedknnvaryingn}, we see the following properties. First, as is consistent with  \eqref{e:Casympt} for the $r=1$ case, as $n\to \infty$, $n C(q,n)$ tends to a finite positive constant, independent of $q$.  Additionally, larger $q$ values, at fixed $n$, have larger prefactors.  Analogous to \eqref{eq:Hasympt}, energy barrier $H_{q+1}$ tends to a a fixed constant.  At fixed $n$, larger values of $q$ have lower energy barriers.  We thus conjecture that formulas analogous to \eqref{e:Casympt} and \eqref{eq:Hasympt} are also valid in the nonlocal case.

\section{Discussion} 
\label{sec:discuss} 

In this work, we have described metastable transitions in a Kuramoto model with 
nearest-neighbor interaction, for arbitrary but finite number $n$ of oscillators. 
In particular, we have shown that the most likely transitions are those between 
$q$-twisted states $u^{(q_1)}$ and $u^{(q_2)}$ with $\abs{q_2} < \abs{q_1}$,
and obtained sharp Eyring--Kramers-type asymptotics for the expected transition time.

One interesting question is, what happens in the limit $n\to\infty$. 
Consider a sequence of systems on $\Lambda_n = \Z/n\Z$, given by 
\begin{equation}
 \6u_i = K_n \bigbrak{\sin\bigpar{2\pi(u_{i+1} - u_i)}
 + \sin\bigpar{2\pi(u_{i-1} - u_i)}} \6t + \sqrt{2\eps_n} \6W^i_t\;.
\end{equation} 
Assume that there exists a smooth interpolating function $\phi(t,x)$, such that 
\begin{equation}
 u_i = \phi\biggpar{t,\frac{i}{n}}
 \qquad \forall i\in\Lambda_n\;.
\end{equation} 
Since 
\begin{equation}
 \sin\bigpar{2\pi(u_{i\pm1} - u_i)} 
 = \pm\frac{2\pi}{n} \partial_x\phi\biggpar{t, \frac in \pm \frac1{2n}} 
 + \mathcal{O} \biggpar{\frac{1}{n^3}}\;, 
\end{equation} 
we obtain that $\phi$ satisfies the equation 
\begin{equation}
\label{eq:KM_continuous} 
 \6\phi(t,x) 
 = \biggbrak{\frac{2\pi K_n}{n^2} \partial_{xx}\phi(t,x) + \mathcal{O} \biggpar{\frac{1}{n^3}}} \6t 
 + \sqrt{2\eps_n} \6W(t,x)\;,
\end{equation} 
where $\6W(t,x)$ denotes space-time white noise. One natural scaling regime is obtained by 
setting $K_n = n^2$. If we choose $\eps_n = \eps$ to be constant, \eqref{eq:KM_continuous} 
converges formally to a stochastic heat equation on the torus, where $\phi$ also has values 
in the torus. 

Note however that the relative communication height between $q$-twisted states satisfies 
\begin{equation}
 H_n = \frac{K_n}{\pi} \biggpar{1 + \mathcal{O} \biggpar{\frac{1}{n}}}\;.
\end{equation} 
Therefore, to leading order, the mean transition time between $q$-twisted states 
is given by 
\begin{equation}
 \frac{3\pi}{2n} \e^{K_n/(\pi\eps_n)}\;.
\end{equation} 
This time diverges as $n\to\infty$ if $K_n = n^2$ and $\eps_n = \eps$ (this is of course 
assuming that the error term $R_n(\eps)$ in Theorem~\ref{thm:main} remains bounded, which we 
have not proved). The interpretation of this is that because of the jump discontinuity of
the $1$-saddles $u^{r}$, $r$ a half-integer, these states do not converge to continuous functions 
in the continuum limit. Instead, they have an infinite slope at one point, and this makes 
their energy blow up. As a result, we do not expect to see any metastable transitions in 
the continuum limit, at least for this particular scaling. 

This may change, however, when one goes beyond nearest-neighbor coupling. Assume that 
the coupling set $S$ in~\eqref{eq:Kuramoto} is given by 
\begin{equation}
 S = \bigset{-r(n),\dots,r(n)}
\end{equation} 
so that the range is $r(n)$. As discussed in Section~\ref{ssec:num_beyond_nn}, numerical simulations indicate that similar Eyring--Kramers asymptotics as those obtained for nearest-neighbor coupling hold for general interaction ranges. If $r(n)$ scales like a constant times $n$, $1$-saddles visited during transitions between $q$-twisted states may have a smoother space dependence, in which the jump is replaced by a boundary layer. For suitable parameter values, this could imply that these transition states have a finite energy in the $n\to\infty$ limit, which would result in a finite value for the expectation of metastable transition times. 


\subsection*{Acknowledgement}

NB was supported by the ANR project PERISTOCH, ANR--19--CE40--0023. 
The work of GM was partially supported by National Science Foundation (NSF) through grants  DMS--2009233 and DMS--2406941.
GS was supported by NSF grant number DMS--2111278. 
Work reported here was run on hardware supported by Drexel’s University Research Computing Facility. 

This work was motivated by numerical experiments conducted by a group of undergraduate students during two NSF supported summer REUs at Drexel University. John Bonnes, Nicholas De Filippis, Zachary Lee, and Jacob Woods contributed at the early stages of this project.

Finally, we thank the two referees for their constructive comments that led to an improved presentation.


\bibliographystyle{amsplain}
\bibliography{metastab}

\providecommand{\bysame}{\leavevmode\hbox to3em{\hrulefill}\thinspace}
\providecommand{\MR}{\relax\ifhmode\unskip\space\fi MR }
\providecommand{\MRhref}[2]{%
  \href{http://www.ams.org/mathscinet-getitem?mr=#1}{#2}
}
\providecommand{\href}[2]{#2}
\begin{thebibliography}{10}

\bibitem{AbrStr06}
D.M. Abrams and S.H. Strogatz, \emph{Chimera states in a ring of nonlocally
  coupled oscillators}, Internat. J. Bifur. Chaos Appl. Sci. Engrg. \textbf{16}
  (2006), no.~1, 21--37. \MR{2214910 (2006k:37043)}

\bibitem{Aristoff_Lelievre}
David Aristoff and Tony Leli{\`e}vre, \emph{Mathematical analysis of
  temperature accelerated dynamics}, Multiscale Model. Simul. \textbf{12}
  (2014), no.~1, 290--317 (English).

\bibitem{Berglund_Kramers}
Nils Berglund, \emph{Kramers' law: validity, derivations and generalisations},
  Markov Process. Related Fields \textbf{19} (2013), no.~3, 459--490.
  \MR{3156961}

\bibitem{B2023}
\bysame, \emph{Reducing metastable continuous-space {M}arkov chains to {M}arkov
  chains on a finite set}, \texttt{arXiv/2303.12624}, to appear in Annales de
  l'Institut Henri Poincar\'e (2023).

\bibitem{BD15}
Nils Berglund and S\'ebastien Dutercq, \emph{The {E}yring-{K}ramers law for
  {M}arkovian jump processes with symmetries}, J. Theoret. Probab. \textbf{29}
  (2016), no.~4, 1240--1279. \MR{3571245}

\bibitem{BD16}
\bysame, \emph{Interface {D}ynamics of a {M}etastable {M}ass-{C}onserving
  {S}patially {E}xtended {D}iffusion}, J. Stat. Phys. \textbf{162} (2016),
  no.~2, 334--370. \MR{3441363}

\bibitem{BFG2007a}
Nils Berglund, Bastien Fernandez, and Barbara Gentz, \emph{Metastability in
  interacting nonlinear stochastic differential equations. {I}. {F}rom weak
  coupling to synchronization}, Nonlinearity \textbf{20} (2007), no.~11,
  2551--2581. \MR{2361246}

\bibitem{BFG2007b}
\bysame, \emph{Metastability in interacting nonlinear stochastic differential
  equations. {II}. {L}arge-{$N$} behaviour}, Nonlinearity \textbf{20} (2007),
  no.~11, 2583--2614. \MR{2361247}

\bibitem{BG_MPRF10}
Nils Berglund and Barbara Gentz, \emph{The {E}yring--{K}ramers law for
  potentials with nonquadratic saddles}, Markov Processes Relat. Fields
  \textbf{16} (2010), 549--598. \MR{2759772 (2011i:60139)}

\bibitem{BG2013}
\bysame, \emph{Sharp estimates for metastable lifetimes in parabolic {SPDE}s:
  Kramers' law and beyond}, Electron. J. Probab. \textbf{18} (2013), no. 24,
  1--58.

\bibitem{BGP2010}
Lorenzo Bertini, Giambattista Giacomin, and Khashayar Pakdaman, \emph{Dynamical
  aspects of mean field plane rotators and the {Kuramoto} model}, J. Stat.
  Phys. \textbf{138} (2010), no.~1-3, 270--290 (English).

\bibitem{BGP2014}
Lorenzo Bertini, Giambattista Giacomin, and Christophe Poquet,
  \emph{Synchronization and random long time dynamics for mean-field plane
  rotators}, Probab. Theory Relat. Fields \textbf{160} (2014), no.~3-4,
  593--653 (English).

\bibitem{BovierHollander-Metastability}
Anton Bovier and Frank den Hollander, \emph{Metastability. {A}
  potential-theoretic approach}, Grundlehren Math. Wiss., vol. 351, Cham:
  Springer, 2015 (English).

\bibitem{BEGK2004}
Anton Bovier, Michael Eckhoff, V{\'e}ronique Gayrard, and Markus Klein,
  \emph{Metastability in reversible diffusion processes. {I}: {Sharp}
  asymptotics for capacities and exit times}, J. Eur. Math. Soc. (JEMS)
  \textbf{6} (2004), no.~4, 399--424 (English).

\bibitem{BGK2005}
Anton Bovier, V{\'e}ronique Gayrard, and Markus Klein, \emph{Metastability in
  reversible diffusion processes. {II}: {Precise} asymptotics for small
  eigenvalues}, J. Eur. Math. Soc. (JEMS) \textbf{7} (2005), no.~1, 69--99
  (English).

\bibitem{COS24}
Pietro Caputo, S{\'e}bastien Ott, and Assaf Shapira, \emph{Relaxation time and
  topology in 1d ${O}({N})$ models}, Preprint, {arXiv}:2407.12610 [math.{PR}]
  (2024), 2024.

\bibitem{ChiMed19a}
Hayato Chiba and Georgi~S. Medvedev, \emph{The mean field analysis of the
  {K}uramoto model on graphs {I}. {T}he mean field equation and transition
  point formulas}, Discrete Contin. Dyn. Syst. \textbf{39} (2019), no.~1,
  131--155. \MR{3918168}

\bibitem{CMM2018}
Hayato Chiba, Georgi~S. Medvedev, and Matthew~S. Mizuhara, \emph{Bifurcations
  in the {Kuramoto} model on graphs}, Chaos \textbf{28} (2018), no.~7, 073109,
  10.

\bibitem{Cosco_Shapira_21}
Cl{\'e}ment Cosco and Assaf Shapira, \emph{Topologically induced metastability
  in a periodic {XY} chain}, J. Math. Phys. \textbf{62} (2021), no.~4, 15
  (English), Id/No 043301.

\bibitem{Creaser_Ashwin_Tsaneva-Atanasova20}
Jennifer Creaser, Peter Ashwin, and Krasimira Tsaneva-Atanasova,
  \emph{Sequential escapes and synchrony breaking for networks of bistable
  oscillatory nodes}, SIAM J. Appl. Dyn. Syst. \textbf{19} (2020), no.~4,
  2829--2846 (English).

\bibitem{Day1}
Martin~V. Day, \emph{On the exponential exit law in the small parameter exit
  problem}, Stochastics \textbf{8} (1983), 297--323.

\bibitem{denHollander04}
F.~den Hollander, \emph{Metastability under stochastic dynamics}, Stochastic
  Process. Appl. \textbf{114} (2004), no.~1, 1--26.

\bibitem{DeV12}
Lee DeVille, \emph{Transitions amongst synchronous solutions in the stochastic
  {K}uramoto model}, Nonlinearity \textbf{25} (2012), no.~5, 1473--1494.
  \MR{2914150}

\bibitem{DorBul2014}
Florian D\"{o}rfler and Francesco Bullo, \emph{Synchronization in complex
  networks of phase oscillators: a survey}, Automatica J. IFAC \textbf{50}
  (2014), no.~6, 1539--1564. \MR{3214901}

\bibitem{Dutercq_thesis}
S\'ebastien Dutercq, \emph{M\'etastabilité dans les syst\`emes avec lois de
  conservation}, Ph.D. thesis, University of Orl\'eans, 2015.

\bibitem{weinan2007simplified}
Weinan E, Weiqing Ren, and Eric Vanden-Eijnden, \emph{Simplified and improved
  string method for computing the minimum energy paths in barrier-crossing
  events}, Journal of Chemical Physics \textbf{126} (2007), no.~16, 164103.

\bibitem{FW-3rd}
Mark~I. Freidlin and Alexander~D. Wentzell, \emph{Random perturbations of
  dynamical systems}, third ed., Grundlehren der mathematischen Wissenschaften
  [Fundamental Principles of Mathematical Sciences], vol. 260, Springer,
  Heidelberg, 2012, Translated from the 1979 Russian original by Joseph
  Sz\"ucs. \MR{2953753}

\bibitem{Giacomin_Lucon_Poquet14}
Giambattista Giacomin, Eric Lu{\c{c}}on, and Christophe Poquet, \emph{Coherence
  stability and effect of random natural frequencies in populations of coupled
  oscillators}, J. Dyn. Differ. Equations \textbf{26} (2014), no.~2, 333--367
  (English).

\bibitem{Giacomin_Poquet_Shapira18}
Giambattista Giacomin, Christophe Poquet, and Assaf Shapira, \emph{Small noise
  and long time phase diffusion in stochastic limit cycle oscillators}, J.
  Differ. Equations \textbf{264} (2018), no.~2, 1019--1049 (English).

\bibitem{hamelberg_accelerated_2004}
Donald Hamelberg, John Mongan, and J.~Andrew {McCammon}, \emph{Accelerated
  molecular dynamics: A promising and efficient simulation method for
  biomolecules},  \textbf{120}, no.~24, 11919--11929.

\bibitem{heath_scientific_2018}
Michael~T. Heath, \emph{Scientific computing: an introductory survey}, revised
  second edition, siam edition ed., Classics in applied mathematics, no.~80,
  Society for Industrial and Applied Mathematics, Philadelphia, 2018 (en).

\bibitem{KurBat02}
Y.~Kuramoto and D.~Battogtokh, \emph{Coexistence of coherence and incoherence
  in nonlocally coupled phase oscillators}, Nonlinear Phenomena in Complex
  Systems \textbf{5} (2002), 380--385.

\bibitem{Kur75}
Yoshiki Kuramoto, \emph{Self-entrainment of a population of coupled non-linear
  oscillators}, International {S}ymposium on {M}athematical {P}roblems in
  {T}heoretical {P}hysics ({K}yoto {U}niv., {K}yoto, 1975), Springer, Berlin,
  1975, pp.~420--422. Lecture Notes in Phys., 39. \MR{0676492}

\bibitem{lelievre_mathematical_2020}
Tony Lelièvre, \emph{Mathematical foundations of accelerated molecular
  dynamics methods}, Handbook of Materials Modeling, Springer, Cham,
  pp.~773--803.

\bibitem{Lcon_Poquet17}
E.~Lu{\c{c}}on and C.~Poquet, \emph{Long time dynamics and disorder-induced
  traveling waves in the stochastic {Kuramoto} model}, Ann. Inst. Henri
  Poincar{\'e}, Probab. Stat. \textbf{53} (2017), no.~3, 1196--1240 (English).

\bibitem{MM2022}
Georgi~S. Medvedev and Matthew~S. Mizuhara, \emph{Chimeras unfolded}, J. Stat.
  Phys. \textbf{186} (2022), no.~3 (English).

\bibitem{MS2023}
Georgi~S. Medvedev and Gideon Simpson, \emph{A numerical method for a nonlocal
  diffusion equation with additive noise}, Stochastics and Partial Differential
  Equations: Analysis and Computations \textbf{11} (2023), 1433--1469.

\bibitem{MedZhu12}
Georgi~S. Medvedev and Svitlana Zhuravytska, \emph{The geometry of spontaneous
  spiking in neuronal networks}, J. Nonlinear Sci. \textbf{22} (2012), no.~5,
  689--725. \MR{2982050}

\bibitem{MedTan15b}
G.S. Medvedev and X.~Tang, \emph{Stability of twisted states in the {K}uramoto
  model on {C}ayley and random graphs}, Journal of Nonlinear Science
  \textbf{25} (2015), 1169--1208.

\bibitem{nocedal_numerical_2006}
Jorge Nocedal and Stephen~J. Wright, \emph{Numerical optimization}, 2nd ed ed.,
  Springer series in operations research, Springer, New York, 2006 (en), OCLC:
  ocm68629100.

\bibitem{Norris}
J.R. Norris, \emph{Markov chains}, Cambridge University Press, 2006.

\bibitem{OliVar-LargeDeviations}
Enzo Olivieri and Maria~Eul\'alia Vares, \emph{Large deviations and
  metastability}, Encyclopedia of Mathematics and its Applications, vol. 100,
  Cambridge University Press, Cambridge, 2005. \MR{2123364}

\bibitem{Ome18}
O.~E. Omel'chenko, \emph{The mathematics behind chimera states}, Nonlinearity
  \textbf{31} (2018), no.~5, R121--R164. \MR{3816650}

\bibitem{perez_chapter_2009}
Danny Perez, Blas~P. Uberuaga, Yunsic Shim, Jacques~G. Amar, and Arthur~F.
  Voter, \emph{Chapter 4 accelerated molecular dynamics methods: Introduction
  and recent developments}, Annual Reports in Computational Chemistry (Ralph~A.
  Wheeler, ed.), vol.~5, Elsevier, pp.~79--98.

\bibitem{Rezakhanlou_Seo_2021scaling}
Fraydoun Rezakhanlou and Insuk Seo, \emph{Scaling limit of small random
  perturbation of dynamical systems}, Preprint \texttt{arXiv/1812.02069}, 2018.

\bibitem{Seo2020}
Insuk Seo, \emph{Analysis of metastable behavior via solutions of {Poisson}
  equations}, RIMS K{\^o}ky{\^u}roku Bessatsu \textbf{B79} (2020), 1--17
  (English).

\bibitem{strogatz_kuramoto_2000}
Steven~H. Strogatz, \emph{From {K}uramoto to {C}rawford: exploring the onset of
  synchronization in populations of coupled oscillators},  \textbf{143}, no.~1,
  1--20.

\bibitem{Str-Sync}
\bysame, \emph{Sync}, Hyperion Books, New York, 2003, How order emerges from
  chaos in the universe, nature, and daily life. \MR{2394754}

\bibitem{StrMir91}
Steven~H. Strogatz and Renato~E. Mirollo, \emph{Stability of incoherence in a
  population of coupled oscillators}, J. Statist. Phys. \textbf{63} (1991),
  no.~3-4, 613--635. \MR{1115806}

\bibitem{StrMir92}
Steven~H. Strogatz, Renato~E. Mirollo, and Paul~C. Matthews, \emph{Coupled
  nonlinear oscillators below the synchronization threshold: relaxation by
  generalized {L}andau damping}, Phys. Rev. Lett. \textbf{68} (1992), no.~18,
  2730--2733. \MR{1160290}

\bibitem{tiwary_review_2016}
Pratyush Tiwary and Axel van~de Walle, \emph{A review of enhanced sampling
  approaches for accelerated molecular dynamics}, Multiscale Materials Modeling
  for Nanomechanics, Springer, Cham, {ISSN}: 2196-2812, pp.~195--221.

\bibitem{WilStr06}
D.A. Wiley, S.H. Strogatz, and M.~Girvan, \emph{The size of the sync basin},
  Chaos \textbf{16} (2006), no.~1, 015103, 8. \MR{2220552 (2007e:37016)}

\bibitem{zamora_accelerated_2020}
R.~J. Zamora, D.~Perez, E.~Martinez, B.~P. Uberuaga, and A.~F. Voter,
  \emph{Accelerated molecular dynamics methods in a massively parallel world},
  Handbook of Materials Modeling, Springer, Cham, pp.~1--28.

\end{thebibliography}

\bigskip\bigskip\noindent
{\small
Nils Berglund \\
Institut Denis Poisson (IDP) \\ 
Universit\'e d'Orl\'eans, Universit\'e de Tours, CNRS -- UMR 7013 \\
B\^atiment de Math\'ematiques, B.P. 6759\\
45067~Orl\'eans Cedex 2, France \\
{\it E-mail address: }
{\tt nils.berglund@univ-orleans.fr}

\bigskip\bigskip\noindent
{\small
  Georgi S. Medvedev\\
  Department of Mathematics\\
  Drexel University\\
  3141 Chestnut Street\\
  Philadelphia, PA 19104, USA\\
{\it E-mail address: }
{\tt medvedev@drexel.edu}

\bigskip\bigskip\noindent
{\small
  Gideon Simpson\\
  Department of Mathematics\\
  Drexel University\\
  3141 Chestnut Street\\
  Philadelphia, PA 19104, USA\\
{\it E-mail address: }
{\tt  grs53@drexel.edu}

\end{document}